\documentclass[a4paper]{article}

\usepackage[english]{babel}
\usepackage[utf8x]{inputenc}
\usepackage{geometry} 
\usepackage{euscript}
\usepackage{xcolor}
\usepackage{tikz}
\usetikzlibrary{angles, quotes}
\usepackage{url}

\usepackage{amsrefs} 

\usepackage{graphicx} 
\usepackage{epstopdf}
\usepackage{amscd}
\usepackage{array} 
\usepackage{verbatim} 
\usepackage{amssymb}
\usepackage{amsmath}
\usepackage{amsthm}
\usepackage{csquotes}
\usepackage{mathtools}

\newtheorem{exmp}{Example}
\newtheorem{definition}{Definition}
\newtheorem{remark}{Remark}
\newtheorem{theorem}{Theorem}
\newtheorem*{theorem*}{Theorem}
\newtheorem{lemma}{Lemma}
\newtheorem{prop}{Proposition}

\newtheorem{cor}{Corollary}
\newtheorem{question}{Question}

\def\cone{K}
\def\latt{\mathbb{Z}K}
\def\rcone{\mathbb{R}_{\ge0}K}
\def\rlatt{\mathbb{R}K}

\def\rp{\Lambda^+}
\def\mathu{\mathcal{C}}
\DeclareMathOperator{\Tra}{Tr}

\DeclareMathOperator{\gcone}{Cone}

\DeclareMathOperator{\Int}{Int}

\DeclareMathOperator{\rk}{rk}

\DeclareMathOperator{\Irr}{Irr}

\DeclareMathOperator{\Aut}{Aut}
\DeclareMathOperator{\Cat}{Cat}

\title{Stability of amalgamated free products and HNN extensions}

\author{Maria Gerasimova \and
  Konstantin Shchepin}
\newcommand{\Addresses}{{
  \bigskip
  \footnotesize
  \medskip
  Maria Gerasimova, \textsc{University of M{\"u}nster, Mathematical Institute, Einsteinstrasse 62, 48149, M{\"u}nster, Germany}\par\nopagebreak
  \textit{E-mail address} : \texttt{mari9gerasimova@mail.ru}
  
\medskip

  Konstantin Shchepin \textsc{}\par\nopagebreak
  \textit{E-mail address} : \texttt{shchepin.konstantin@gmail.com}
}}

\begin{document}
\vspace{-8ex}
\date{}
\maketitle

\begin{abstract}
We study stability of amalgamated free products and HNN extensions of stable groups over finite groups. We focus on operator norm stability, Hilbert-Schmidt stability and stability in permutations. We provide many new examples of stable (or flexibly stable) non-amenable groups.
\end{abstract}

\section{Introduction}

Given a system of equations $R$, one can ask if it is ``stable'' meaning that each ``almost'' solution is ``close'' to an actual solution. We consider a set $S$ of noncommutative variables and a set of equations in the form of words from the free group $F_S$ generated by $S$. It turns out that stability of a system of equations $R$ is a property of a group $G=\langle S|R\rangle$.
This means that if two sets of relations give isomorphic groups, then either both of them are stable or not stable. 

To give a precise meaning to ``almost'' and ``close'' one needs to fix a class of groups with the bi-invariant metric. Different classes of groups give different definitions of stability. Let us briefly describe the classes which we will be interested in throughout the paper. For us the main examples will be finite-dimensional unitary groups with a metric given by some norm (more precisely, by the operator norm or by the normalized Hilbert-Schmidt norm) and finite symmetric groups equipped with the normalized Hamming distance. We will denote the corresponding classes of groups by $\mathu_{op}$, $\mathu_{HS}$ and $\mathu_P$. 

There also exists a flexible version of stability. Roughly speaking, it means that one can look for a homomorphism to a slightly larger group from the same class $\mathu$. One needs to define flexible stability for each class of groups $\mathu$ separately. This notion seems to be strictly weaker than being stable, although for now we do not have any example showing this (for instance, for amenable groups stability and flexible stability coincide, see \cite{ioana2020stability} for $\mathu_P$ and \cite{eckhardt2023amenable} for $\mathu_{HS}$). We will discuss the precise definitions in Section 2.3. 

Operator norm stability seems to be quite a rare property. For example, by the famous result of Voiculescu, $\mathbb{Z}^2$ is not $\mathu_{op}$-stable (\cite{voiculescu1983asymptotically}).  
Although there are some non-trivial positive results (\cite{eilers2020c}), there are many more negative ones (\cite{eilers2020c}, \cite{dadarlat2021obstructions}, \cite{bader2023stability}). 

Hilbert-Schmidt stability and stability in permutations look much friendlier. For example, in the case of amenable groups there are nice characterisations of stability (see \cite{hadwin2018stability} for $\mathu_{HS}$ and \cite{becker2019stability} for $\mathu_P$) that allow to produce many interesting examples (see \cite{becker2019stability}, \cite{levit2022infinitely}, \cite{zheng2019rigid}, \cite{levit2022uncountably}  for $\mathu_P$ and \cite{eckhardt2023amenable}, \cite{levit2022characters} for $\mathu_{HS}$). 

Unfortunately, in the setting of non-amenable groups no necessary and sufficient conditions are known. Because of this all non-amenable examples have to be examined separately. It is trivial both for $\mathu_{HS}$ and for $\mathu_P$ that free products of stable groups are again stable. Non-trivial examples of $P$-stable groups include virtually free groups (\cite{lazarovich2021virtually}). Also surface groups are known to be flexibly $P$-stable (\cite{lazarovich2019surface}). 
Non-amenable examples of ${HS}$-stable groups include certain graph product groups (\cite{atkinson2018some}), one-relator groups with non-trivial center (\cite{hadwin2018stability}) and virtually free groups (\cite{ger2021}).

The results about stability of virtually free groups in particular imply that amalgamated free products and HNN extensions of finite groups are stable. This leads to a natural question if one can extend these results to amalgamated free products and HNN extensions of stable groups. Note that in general there is no hope that amalgamated free products of stable groups will be stable (even if we amalgamate over a finite index subgroup, see \cite{becker2020group} for $\mathu_P$). However, stability of virtually free groups both in $\mathu_P$ and $\mathu_{HS}$ suggests that one can try to look at amalgamated free products and HNN extensions over finite groups. In this paper we study this question. 

Let us briefly describe our results. 
Operator norm stability appears to be the nicest one for amalgamated free products and HNN extensions. The reason for this is that representations of a finite group that are close in operator norm are conjugated (by an operator that is close to identity). The following result, that was probably overlooked, gives us more examples of operator norm stable groups.  

\begin{theorem*}[Theorem~\ref{operator_amalgam}, Theorem~\ref{operator_HNN}]
    Amalgamated free products and HNN extensions of operator norm stable groups over finite subgroups are operator norm stable.
\end{theorem*}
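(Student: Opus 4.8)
The plan is to treat both statements by a single mechanism: restrict an almost-representation of the amalgam to its vertex groups, straighten it on each vertex group using their stability, and then glue the resulting honest representations via the universal property — the only point needing work being that they can be made to agree on the finite edge group. Concretely, for $G = A *_C B$ with $C$ finite I would start from a sequence of asymptotic homomorphisms $\phi_n \colon G \to U(k_n)$, i.e. maps with $\|\phi_n(gh) - \phi_n(g)\phi_n(h)\| \to 0$ in operator norm for all $g,h \in G$. Restricting along the inclusions $A \hookrightarrow G$ and $B \hookrightarrow G$ produces asymptotic homomorphisms of $A$ and of $B$, so by $\mathu_{op}$-stability of $A$ and $B$ there are genuine representations $\psi_n^A \colon A \to U(k_n)$ and $\psi_n^B \colon B \to U(k_n)$ with $\|\psi_n^A(a) - \phi_n(a)\| \to 0$ and $\|\psi_n^B(b) - \phi_n(b)\| \to 0$ for all $a \in A$, $b \in B$.

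The heart of the argument is the rigidity of finite groups in the operator norm. Restricting to $C$, the two honest representations $\psi_n^A|_C$ and $\psi_n^B|_C$ are both uniformly close to $\phi_n|_C$ — here finiteness of $C$ is essential, since it lets us control all of $C$ at once — and hence $\varepsilon_n := \max_{c \in C}\|\psi_n^A(c) - \psi_n^B(c)\| \to 0$. I would then produce a unitary $u_n$ close to the identity with $u_n \psi_n^A(c) u_n^{*} = \psi_n^B(c)$ for all $c \in C$ by averaging: the intertwiner
\[
  T_n \;=\; \frac{1}{|C|}\sum_{c \in C} \psi_n^B(c)\,\psi_n^A(c)^{-1}
\]
satisfies $\psi_n^B(c) T_n = T_n \psi_n^A(c)$ exactly, while $\|T_n - I\| \le \varepsilon_n \to 0$, so for large $n$ the polar part $u_n$ of $T_n$ is unitary, intertwines $\psi_n^A|_C$ with $\psi_n^B|_C$, and satisfies $\|u_n - I\| \to 0$. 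Replacing $\psi_n^A$ by $u_n \psi_n^A u_n^{*}$ (still asymptotically close to $\phi_n|_A$ because $u_n \to I$) we may assume $\psi_n^A|_C = \psi_n^B|_C$. By the universal property of the amalgamated free product these glue to a genuine homomorphism $\psi_n \colon A *_C B \to U(k_n)$, and since $\psi_n$ agrees asymptotically with $\phi_n$ on the generating sets of $A$ and of $B$, it is the desired honest representation close to $\phi_n$.

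For the HNN extension $G = \langle A, t \mid t c t^{-1} = \theta(c),\ c \in C\rangle$ with $\theta \colon C \to A$ an injection of the finite subgroup $C$, the same scheme applies with the stable letter playing the role of the gluing datum. I straighten $\phi_n|_A$ to an honest $\psi_n^A$ as above and set $v_n = \phi_n(t)$. The relations force $\|v_n \psi_n^A(c) v_n^{*} - \psi_n^A(\theta(c))\| \to 0$, i.e. $v_n$ approximately intertwines the two honest representations $c \mapsto \psi_n^A(c)$ and $c \mapsto \psi_n^A(\theta(c))$ of $C$; the averaged operator $\frac{1}{|C|}\sum_{c} \psi_n^A(\theta(c))\, v_n\, \psi_n^A(c)^{-1}$ is an exact intertwiner within $o(1)$ of $v_n$, whose polar part $w_n$ is a unitary with $w_n \psi_n^A(c) w_n^{*} = \psi_n^A(\theta(c))$ and $\|w_n - v_n\| \to 0$. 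Then $\psi_n^A$ together with $t \mapsto w_n$ defines, via the universal property of the HNN extension, a genuine homomorphism $\psi_n \colon G \to U(k_n)$ asymptotically close to $\phi_n$.

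I expect the main obstacle to be precisely the finite-group rigidity step: making the intertwiner honestly unitary and, crucially, close to the identity (respectively to $v_n$) rather than merely close to some arbitrary unitary, which is what keeps the corrected representation near $\phi_n$. The averaging-plus-polar-decomposition argument above is what delivers this; the restriction, the application of stability of the vertex groups, and the gluing are then formal, and the only quantitative bookkeeping is translating the $\varepsilon$--$\delta$ formulation of $\mathu_{op}$-stability into the sequential one used here.
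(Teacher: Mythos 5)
Your proposal is correct and takes essentially the same approach as the paper: your averaging-plus-polar-decomposition step is exactly the paper's Proposition~\ref{matr_prop} (the exact intertwiner $\frac{1}{|H|}\sum_h \varphi_1(h)\varphi_2(h)^{-1}$, its polar part staying an intertwiner as in Lemma~\ref{posdef_lemma}, and closeness to the identity via Lemma~\ref{fromUlam}), after which conjugating one vertex group, respectively replacing $\varphi(t)$ by the unitary polar part of the averaged intertwiner, is precisely how Theorems~\ref{operator_amalgam} and~\ref{operator_HNN} conclude. The only difference is organizational: the paper first reduces to honest representations of $G_1*G_2$ (resp.\ $G*\mathbb{Z}$) through the stable-epimorphism framework of Proposition~\ref{st_and_st_ep_give_st}, while you perform the restriction-and-straightening directly in the sequential formulation.
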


For $\mathu_{HS}$ and $\mathu_P$ we present a uniform approach that allows us to write only one statement and only one proof for both types of stability. In the further results $\mathu$ denotes either $\mathu_{HS}$, or $\mathu_P$ and the statements hold for both cases.

Stability of amalgamated free products and HNN extensions for $\mathu_{HS}$ and $\mathu_P$ has another layer of complexity -- close representations and actions of a finite group now are not necessarily conjugated. 
The only case where the same argument works without any additional changes is the special case of HNN extensions. 

\begin{theorem*}[Theorem~\ref{double_HNN}]
    An HNN extension of a $\mathu$-stable group over two identical finite subgroups is $\mathu$-stable.
\end{theorem*}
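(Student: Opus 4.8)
The plan is to exploit the universal property of the HNN extension together with the $\mathu$-stability of the base group $G$, reducing the whole problem to a single correction step for the stable letter. Write the extension over the finite subgroup $A \le G$, with the two associated subgroups identified by the identity map, as
$$\Gamma = \langle\, G,\, t \mid t a t^{-1} = a \ \ (a \in A)\,\rangle,$$
so that in $\Gamma$ the stable letter $t$ centralizes $A$. Because the two associated copies of $A$ coincide and are identified by the identity, an almost-homomorphism of $\Gamma$ produces only \emph{one} approximate action of $A$, namely $\rho|_A$; there is no "incoming" and "outgoing" copy of $A$ to reconcile, which is exactly what removes the conjugacy obstruction present in the general HNN case.

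First I would take an almost-homomorphism $\rho$ of $\Gamma$ into the relevant group ($U(n)$ with the Hilbert--Schmidt metric for $\mathu_{HS}$, or $S_n$ with the Hamming metric for $\mathu_P$) and restrict it to $G$. By $\mathu$-stability of $G$ there is a genuine homomorphism $\pi\colon G \to U(n)$ (resp.\ $S_n$) close to $\rho|_G$ on the generators of $G$; in particular $\pi(a)$ is close to $\rho(a)$ for every $a \in A$. Since $\rho(t)$ approximately commutes with each $\rho(a)$ (this is the defining relation of $\Gamma$) and $\pi(A)$ is close to $\rho(A)$, the element $\rho(t)$ approximately commutes with the genuine finite-group representation, respectively action, $\pi|_A$.

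The heart of the argument is to replace $\rho(t)$ by a genuine element $\pi(t)$ that commutes \emph{exactly} with $\pi|_A$ while staying close to $\rho(t)$; once this is done, $\pi$ extends over $\Gamma$ by the universal property (a homomorphism of $G$ together with an element centralizing $\pi(A)$ is precisely the data of a homomorphism of $\Gamma$), and the resulting $\pi$ is close to $\rho$ on all generators, proving stability. I would carry out this correction uniformly as a retraction onto the commutant of the finite group. In the $\mathu_{HS}$ case, apply the averaging projection $P(x) = \tfrac{1}{|A|}\sum_{a \in A} \pi(a)\, x\, \pi(a)^{-1}$ onto $\pi(A)'$: since $\rho(t)$ nearly commutes with $\pi(A)$, the image $P(\rho(t))$ is Hilbert--Schmidt close to $\rho(t)$ and lies in $\pi(A)'$, and taking its unitary (polar) part, which remains in the $*$-subalgebra $\pi(A)'$, yields $\pi(t)$. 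In the $\mathu_P$ case the commutant is the group of $A$-equivariant permutations; here I would decompose $[n]$ into $\pi(A)$-orbits, group these orbits by their isomorphism type as $A$-sets, and build an equivariant permutation $\pi(t)$ by keeping $\rho(t)$ on the orbits it already maps equivariantly and re-matching the rest within each isotypic type.

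The main obstacle I expect is the quantitative bookkeeping on the permutation side. Existence of an equivariant permutation is free (the identity is one), so the entire content is controlling the Hamming distance: I must bound the number of points at which $\rho(t)$ violates the orbit structure of $\pi|_A$ by a constant times the commutation defect, observe that any orbit $\rho(t)$ respects is carried onto an orbit of the \emph{same} isotypic type, and conclude that within each type the surviving source and target orbits are equal in number, so the leftover orbits can be re-matched equivariantly at a cost proportional to that same defect. Formulating both the averaging and the orbit-matching as two instances of one "retraction onto the commutant of a finite group" statement is what lets the whole proof be written once for both $\mathu_{HS}$ and $\mathu_P$.
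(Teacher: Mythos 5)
Your overall strategy coincides with the paper's: restrict to $G$, use $\mathu$-stability of $G$ to get a genuine homomorphism $\pi$ close to $\rho|_G$, and then correct the stable letter so that it commutes exactly with $\pi(A)$, extending by the universal property. The paper packages this as: $G*\mathbb{Z}$ is stable (free product of stable groups), and the epimorphism $G*\mathbb{Z}\to G*_{i,i}$ is stable because the conjugator argument (Proposition~\ref{prop_conj}) applies to $i^*(\varphi_G)$ and its conjugate by $\varphi(t)$, whose classes in $\rp(H)$ coincide automatically --- which is exactly your observation that there is only one copy of $A$ to reconcile, so the conjugacy obstruction vanishes. Your framing via almost homomorphisms of $\Gamma$ is equivalent to this. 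Your $\mathu_P$ branch is correct as sketched: orbits on which $\rho(t)$ is equivariant are carried onto orbits of the same isomorphism type, the counts of leftover source and target orbits agree within each type, and the rematching cost is controlled by the commutation defect (with a constant depending on $|A|$); this is in substance the paper's own proof of Proposition~\ref{prop_conj} for $\mathu_P$.

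However, your $\mathu_{HS}$ branch has a genuine gap at the words ``taking its unitary (polar) part''. The averaged operator $T=\frac{1}{|A|}\sum_{a\in A}\pi(a)\rho(t)\pi(a)^{-1}$ does lie in the commutant $\pi(A)'$ and is Hilbert--Schmidt close to $\rho(t)$, but Hilbert--Schmidt closeness to a unitary does not make $T$ invertible: $T$ can vanish on a subspace of small but positive dimension (for instance where $\rho(t)$ anticommutes with some $\pi(a)$), and then the polar decomposition yields only a partial isometry in $\pi(A)'$, not a unitary. This is precisely the point where $\mathu_{HS}$ differs from the operator norm case (Proposition~\ref{matr_prop}), where $\|T-1_X\|_{op}<1$ forces invertibility, and it is the reason the paper invokes a Gowers--Hatami-type result for $\mathu_{HS}$ rather than the bare averaging trick. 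The gap is fixable, but it needs an argument you do not supply: from $\|T^*T-1_X\|_{HS}\preceq\varepsilon$ one bounds the normalized trace of the kernel projection of $T$ by $\preceq\varepsilon^2$; since $\pi(A)'$ is a finite-dimensional von Neumann algebra (hence finite), the source and range defect projections of the polar partial isometry are equivalent \emph{inside} $\pi(A)'$, so the partial isometry can be completed to a unitary of $\pi(A)'$ at small Hilbert--Schmidt cost. Alternatively, you can simply quote Proposition~\ref{prop_conj} for $\mathu_{HS}$ applied to $\pi|_A$ and $(\pi|_A)^{\rho(t)}$ (their $\#$-classes are equal since they are conjugate), which is exactly what the paper does.
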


Even the special case of amalgamated free products (so-called group doubles) requires some non-trivial technical results on subsemigroups of $\mathbb{Z}^N$. In this case our methods allow us to prove only flexible stability (even if we assume that our group is stable and not just flexibly stable). 

\begin{theorem*}[Theorem~\ref{double_amalgam}]
    A group double of a (flexibly) $\mathu$-stable group over a finite subgroup is flexibly $\mathu$-stable.
\end{theorem*}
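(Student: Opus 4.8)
The plan is to reduce $\mathu$-stability of the double $\Gamma=G*_H G$ to matching up the two copies of $G$ along the finite subgroup $H$. Given a $\delta$-almost homomorphism $\phi\colon\Gamma\to U(n)$ (resp.\ into $\Sym(n)$), I restrict it to the two copies $G_1,G_2$ to obtain $\delta$-almost homomorphisms $\phi_1,\phi_2$ of $G$ that share the \emph{same} restriction $\phi|_H$. Since $H$ is finite it is $\mathu$-stable with linear estimates, so after perturbing $\phi$ by $O(\delta)$ I may assume $\phi|_H=\sigma$ is a genuine representation (resp.\ action) of $H$. Now apply flexible $\mathu$-stability of $G$ to each $\phi_i$: this yields genuine homomorphisms $\pi_i\colon G\to U(m_i)$ with $\pi_i$ close to a stabilization $\phi_i\oplus 1$, and in particular $\pi_i|_H$ close to $\sigma\oplus 1$. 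The whole problem is then to glue $\pi_1,\pi_2$ into a genuine homomorphism of $\Gamma$, which is possible exactly when the two restrictions $\pi_1|_H,\pi_2|_H$ are \emph{isomorphic} as $H$-objects: one lets $G_1$ act by $\pi_1$, $G_2$ by a transported copy of $\pi_2$, the two actions agreeing on $H$.

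The difficulty, and the reason only flexible stability is obtained, is that $\pi_1|_H$ and $\pi_2|_H$ are only \emph{close}, not isomorphic. I encode an $H$-object by its vector of multiplicities of irreducibles of $H$ (resp.\ of orbit-types $H/K$), an element of $\mathbb Z_{\ge0}^N$, and let $\cone\subseteq\mathbb Z_{\ge0}^N$ be the submonoid of vectors arising as restrictions of genuine $G$-objects. A character computation (resp.\ orbit count), using $\sigma$ as common reference, shows the two restriction vectors $\mathrm{Res}(\pi_1),\mathrm{Res}(\pi_2)\in\cone$ differ by a vector $\vec d\in\latt$ whose $\size$ is small relative to the ambient dimension $M=\max(m_1,m_2)$ and tends to $0$ with $\varepsilon$. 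To repair the mismatch I use flexibility: I seek genuine $G$-objects $\tau_1,\tau_2$ with $\mathrm{Res}(\pi_1)+\mathrm{Res}(\tau_1)=\mathrm{Res}(\pi_2)+\mathrm{Res}(\tau_2)$, i.e.\ $\mathrm{Res}(\tau_2)-\mathrm{Res}(\tau_1)=\vec d$. This is the combinatorial heart of the statement: one must write a small element $\vec d$ of the difference group $\latt$ as a difference $\vec t_2-\vec t_1$ of elements of $\cone$ whose $\size$ is still $O(\size(\vec d))$. I expect this \textbf{quantitative semigroup lemma}---a linear size bound on difference representations inside a finitely generated submonoid of $\mathbb Z_{\ge0}^N$, proved by analysing the real cone $\rcone$, its indecomposable elements $\Indec$ and the rank $\rk$---to be the main obstacle; the rest is soft.

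Granting the lemma, $\tau_1,\tau_2$ have dimension $O(\size(\vec d))$, small relative to $M$, so enlarging $\pi_i$ to $\pi_i\oplus\tau_i$ changes it negligibly in the normalized metric while making the two $H$-restrictions genuinely isomorphic. These restrictions are moreover still $\eta$-close with $\eta\to0$, and two isomorphic $H$-objects that are $\eta$-close admit an isomorphism within $O(\eta)$ of the identity: in the $\mathu_{HS}$ case take the averaged intertwiner $Z_0=\tfrac1{|H|}\sum_{h\in H}\beta(h)\alpha(h)^{*}$ between $\alpha=(\pi_1\oplus\tau_1)|_H$ and $\beta=(\pi_2\oplus\tau_2)|_H$, which is close to $1$ and whose polar unitary part is the desired $H$-isomorphism; in the $\mathu_P$ case use the identity on the large invariant subset where the two actions already coincide and any equivariant bijection on the small remainder (whose orbit-types match since the totals do). Gluing $\pi_1\oplus\tau_1$ with the $Z$-transported $\pi_2\oplus\tau_2$ then gives a genuine homomorphism $\Pi\colon\Gamma\to U(M')$; since $Z$ is near the identity and $\tau_1,\tau_2$ are small, $\Pi$ stays close to $\phi$ on the generators of both $G_1$ and $G_2$, which is exactly what flexible $\mathu$-stability of $\Gamma$ requires. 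The argument is uniform in $\mathu\in\{\mathu_{HS},\mathu_P\}$: its only inputs are flexible stability of $G$, the stability of the finite group $H$, the near-identity gluing step, and the purely combinatorial semigroup lemma, none of which sees the difference between representations and actions beyond the dictionary (irreducible $\leftrightarrow$ transitive $H$-set, Hilbert--Schmidt $\leftrightarrow$ Hamming).
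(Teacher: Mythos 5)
Your reduction is sound and is essentially the route the paper itself takes: restrict to the two copies of $G$, apply flexible stability of $G$, encode the $H$-restrictions as multiplicity vectors in the subsemigroup $\cone=i^*(\rp(G))\subseteq\rp(H)\cong\mathbb{Z}_{\ge0}^N$, repair the mismatch by adding small genuine $G$-objects, and glue with the conjugator argument (averaged intertwiner plus polar part for $\mathu_{HS}$, coincidence set for $\mathu_P$); this is precisely the content of Theorem~\ref{main_flex}, Corollary~\ref{cor_flex} and Proposition~\ref{prop_conj}. However, there is a genuine gap: the ``quantitative semigroup lemma'' that you explicitly defer --- given $\xi_1,\xi_2\in \cone$, find $\eta_1,\eta_2\in \cone$ with $\xi_1+\eta_1=\xi_2+\eta_2$ and $\|\eta_j\|\preceq\|\xi_1-\xi_2\|$ --- is not a soft afterthought but the mathematical core of this theorem; it is exactly Proposition~\ref{double_prop}, and the paper's introduction flags it as the ``non-trivial technical results on subsemigroups of $\mathbb{Z}^N$'' that even the double case requires. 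Your one-line hint (analyse $\rcone$, its indecomposables, the rank) does not contain the ideas that make it work: one needs (i) the density statement of Lemma~\ref{dense}, that for a finitely generated cone there exists $w_0$ with $\latt\cap(w_0+\rcone)\subseteq \cone$, proved via the finitely generated group of integer relations among the generators, and (ii) a two-dimensional geometric construction inside $\xi_1+\mathrm{span}(\xi_1-\xi_2,u)$, where $u$ is an interior direction of $\rcone$ and $\gcone_\gamma(u)\subseteq\rcone$, producing real solutions of norm at most $|\xi_1-\xi_2|/\sin\gamma$, which are then moved into $\latt$ (cocompactness) and finally into $\cone$ (adding $w_0$).

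A second, related defect: you state your lemma for \emph{finitely generated} submonoids of $\mathbb{Z}_{\ge0}^N$, but $\cone=i^*(\rp(G))$ is not known to be finitely generated --- the paper lists exactly this as an open question. So even granting your lemma as stated, it would not apply to the cone you need. The paper's Proposition~\ref{double_prop} is stated for arbitrary subcones and handles this by replacing $\cone$ with a finitely generated subcone $\cone_{n_0}$ satisfying $\mathbb{Z}\cone_{n_0}=\mathbb{Z}\cone$ (possible because $\Lambda(H)$ has finite rank for finite $H$), and producing $\eta_1,\eta_2$ inside $\cone_{n_0}\subseteq\cone$. Without this reduction and without a proof of the density lemma, your proposal establishes the theorem only modulo its hardest ingredient.
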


To prove (flexible) stability in more general case we need to study properties of restrictions of representations and actions to finite subgroups. 
Surprisingly, restrictions of representations and actions to finite almost normal subgroups behave similarly. This allows us to get the following results.

\begin{theorem*}[Theorem~\ref{amalgam_flex}, Theorem~\ref{hnn_flex}]
    Amalgamated free products and HNN extensions of (flexibly) $\mathu$-stable groups over finite almost normal subgroups are flexibly $\mathu$-stable.
\end{theorem*}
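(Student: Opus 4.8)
The plan is to reduce flexible $\mathu$-stability of $G$ to a matching problem for the restrictions to the finite subgroup over which we amalgamate. I would treat the amalgamated product $G = A *_C B$ first; the HNN case $G = \langle A, t \mid t C t^{-1} = C' \rangle$ runs in parallel with $t$ playing the role of an intertwiner. Starting from an almost homomorphism (resp. almost action) $\rho$ of $G$, restrict it to the vertex groups $A$ and $B$ and apply their flexible $\mathu$-stability to obtain genuine homomorphisms $\sigma_A$, $\sigma_B$ close to $\rho|_A$, $\rho|_B$ after enlarging the underlying space. The restrictions $\sigma_A|_C$ and $\sigma_B|_C$ are then genuine representations (resp. actions) of the finite group $C$, each close to the rigidly corrected restriction $\rho|_C$; since $C$ is finite, closeness forces their multiplicity data over $\Irr(C)$ (resp. over the conjugacy classes of subgroups of $C$, in the permutation case) to be close, but not necessarily equal. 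The whole argument hinges on upgrading \emph{close} to \emph{equal}: once $\sigma_A|_C$ and $\sigma_B|_C$ are isomorphic as $C$-representations (resp. as $C$-sets), one conjugates $\sigma_B$ so that it agrees with $\sigma_A$ on $C$ literally, and the universal property of the amalgam yields a genuine homomorphism of $G$ close to $\rho$.

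To force the multiplicity vectors to agree I would exploit the freedom in flexible stability to add padding: pad $\sigma_A$ and $\sigma_B$ by genuine representations (resp. actions) of $A$ and $B$ so that the two restrictions to $C$ become isomorphic. Here the almost normality of $C$ enters decisively. Since $N_A(C)$ has finite index in $A$, for any finite-dimensional representation of $A$ the element $\sigma(a)$, $a \in N_A(C)$, implements an isomorphism between $\sigma|_C$ and its $a$-twist, so the multiplicity of $\chi \in \Irr(C)$ in $\sigma|_C$ is constant along $N_A(C)$-orbits in $\Irr(C)$; conversely, by inducing up from subgroups of $C$ one realizes every sufficiently large orbit-invariant vector. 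Thus the multiplicity vectors arising from restrictions of $A$-representations to $C$ form a cofinite subsemigroup of the lattice of $N_A(C)$-orbit-invariant vectors, and likewise for $B$. Because $\rho$ approximately respects conjugation of $C$ by elements of both $N_A(C)$ and $N_B(C)$, the target vector coming from $\rho|_C$ lies in the common invariant sublattice; I can then choose one large vector $w$, invariant under the group generated by the two normalizers, dominating both $\sigma_A|_C$ and $\sigma_B|_C$, so that $w - [\sigma_A|_C]$ and $w - [\sigma_B|_C]$ lie in the respective realizable semigroups and furnish the padding. Padding orthogonal to the original corner does not disturb closeness to $\rho$ in the normalized metric, and finite-group rigidity lets the gluing conjugation be taken near the identity.

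For the HNN extension the same mechanism applies with $A = B$ and the two copies of $C$ replaced by the conjugate pair $C, C'$: after flexibly stabilizing $\rho|_A$ to $\sigma_A$, I pad $\sigma_A$ so that $\sigma_A|_C$ and $\sigma_A|_{C'}$ become isomorphic via the defining isomorphism $C \to C'$, whereupon a unitary (resp. permutation) $T$ intertwining them exists and, again by finite-group rigidity, can be chosen close to $\rho(t)$; setting $t \mapsto T$ extends $\sigma_A$ to a homomorphism of $G$ close to $\rho$. The main obstacle I expect is precisely the structural result about restrictions to finite almost normal subgroups: proving that the realizable multiplicity data form a cofinite orbit-invariant subsemigroup, in a form uniform enough to cover representations and actions simultaneously (so that $\mathu_{HS}$ and $\mathu_P$ share a single proof), and then solving the resulting semigroup inequalities to produce a common dominating $w$ whose padding is realizable on both sides at once. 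This is the analog, in the almost normal setting, of the subsemigroup-of-$\mathbb{Z}^N$ results used for group doubles, and controlling it when $C$ is genuinely non-normal -- so that the two normalizers, and hence the two invariant sublattices, need not coincide -- is the delicate point.
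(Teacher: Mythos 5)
Your skeleton --- stabilize the vertex groups, match the restriction data over the finite subgroup, then rigidify with the conjugator argument and glue --- is the same as the paper's (Theorem~\ref{main_flex} combined with Proposition~\ref{prop_conj} and Proposition~\ref{st_and_st_ep_give_st}). The gap is in the matching step, and it is genuine. Your central structural claim, that the multiplicity vectors of restrictions of $A$-representations to $C$ form a \emph{cofinite} subsemigroup of the lattice of $N_A(C)$-orbit-invariant vectors, is false. Take $A=Q_8$ and $C=Z(Q_8)\cong\mathbb{Z}_2$ (finite groups are flexibly stable and all their subgroups are almost normal, so this is an instance of the theorem): every irreducible of $Q_8$ on which the central involution acts nontrivially is two-dimensional, so the sign character $\epsilon$ of $C$ occurs in any restriction with even multiplicity, and no vector with odd $\epsilon$-multiplicity is ever realized, however large. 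Similarly, for $A=A_4$ and $C=\langle(12)(34)\rangle$ the realizable cone is generated by $\mathbf{1}$ and $\mathbf{1}+2\epsilon$. Moreover, your mechanism for realizing large invariant vectors, ``inducing up from subgroups of $C$'', breaks down when $[A:C]=\infty$: induction then produces infinite-dimensional representations (resp.\ infinite sets). Finally, even where padding does suffice, your recipe does not: an invariant $w$ dominating both restrictions need not satisfy $w-[\sigma_A|_C]\in i_A^*(\rp(A))$ and $w-[\sigma_B|_C]\in i_B^*(\rp(B))$. With $K_A=\langle\mathbf{1},2\epsilon\rangle$, $K_B=\langle\mathbf{1},\mathbf{1}+2\epsilon\rangle$, $\xi_A=N\mathbf{1}+2\epsilon$, $\xi_B=(N+2)\mathbf{1}$, the coordinatewise maximum $w=(N+2)\mathbf{1}+2\epsilon$ gives $w-\xi_B=2\epsilon\notin K_B$; one has to solve a semigroup problem, not a domination problem.

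More fundamentally, padding alone cannot work in general: if $\xi_A$ has a small but nonzero component in a direction that $K_B$ cannot realize at all, then $\xi_A+\eta_A=\xi_B+\eta_B$ has no solution, because the unmatched component can only grow under padding, while closeness of $\xi_A$ to $\xi_B$ is not contradicted by a component of bounded norm (smallness is measured relative to $|X|\to\infty$). This is exactly why the paper's sufficient condition (Theorem~\ref{main_flex}) permits \emph{cutting} a small piece, $\xi_j'\le\xi_j$, in addition to padding, and why the proof of Theorem~\ref{amalgam_flex} runs through Proposition~\ref{about_A0}(5) (each $i^*(\tau)$ is a multiple of one of finitely many directions $\aleph$, which is what makes the cone $p\left(\rp(G_1)\oplus\rp(G_2)\right)$ primitive), Lemma~\ref{lem_laz}, and Lemma~\ref{technical_for_flex}, whose entire purpose is to zero out the unmatchable coordinates before padding into $\ker d$. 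To repair your argument you would need to replace the cofiniteness claim by this cut-and-pad mechanism; the realizability statement you rely on is simply not available, even for finite vertex groups.
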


In some cases we can prove stability and not just flexible stability.
For a stable group we introduce the notion of a ``decomposing map''. Such maps essentially encode some parameters of a representation or an action of a stable group that can be changed arbitrarily with a small change of a representation or an action, respectively. This notion allows us to prove the following results.  

\begin{theorem*}[Theorem~\ref{amalgam_stability}, Theorem~\ref{hnn_stability}]
    Amalgamated free products and HNN extensions of $\mathu$-stable groups over finite normal subgroups are $\mathu$-stable.
\end{theorem*}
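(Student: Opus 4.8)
The plan is to treat the amalgamated product $G = G_1 *_C G_2$ in detail; the HNN extension $\langle G_0, t \mid t c t^{-1} = \varphi(c),\ c \in C\rangle$ is handled by the same scheme with $G_1 = G_2 = G_0$ and the stable letter $t$ playing the role of the gluing isomorphism $\varphi \in \Aut(C)$. The first observation, which is what distinguishes the normal case from the merely almost-normal case of Theorem~\ref{amalgam_flex}, is that $C$ is in fact normal in all of $G$: any reduced word conjugates $C$ into $C$ because $C \triangleleft G_1$ and $C \triangleleft G_2$. Consequently, for any unitary representation or permutation action of $G$ the restriction to $C$ carries a canonical isotypic (resp. orbit-type) decomposition whose multiplicities are constant along the $G$-orbits of the finite set $\widehat{C}$ (resp. of the $C$-orbit types). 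This rigidity of the $C$-decomposition is exactly what the decomposing map is designed to exploit.

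Starting from an asymptotic homomorphism $\phi_n \colon G \to U(d_n)$ (resp. $\to \Sym(d_n)$), I would restrict it to $G_1$ and $G_2$ to obtain asymptotic homomorphisms of the two factors. Since $G_1$ and $G_2$ are $\mathu$-stable, I would first replace these by genuine representations/actions $\rho_1^{(n)}, \rho_2^{(n)}$ of $G_1, G_2$ on the same space, asymptotically close to $\phi_n|_{G_1}, \phi_n|_{G_2}$. Because $C$ is finite, closeness in the $\mathu$-metric forces the restrictions $\rho_1^{(n)}|_C$ and $\rho_2^{(n)}|_C$ to have multiplicity vectors that differ by only $o(d_n)$; this mismatch is the defect separating a genuine gluing from the flexible one already supplied by Theorem~\ref{amalgam_flex}.

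The heart of the argument, and the step I expect to be the main obstacle, is to annihilate this defect without enlarging the space. Here I would invoke the decomposing maps of $G_1$ and $G_2$: for a $\mathu$-stable group the decomposing map certifies that the $C$-multiplicity vector of a representation/action can be moved, by a perturbation whose size is controlled by the change, to any nearby admissible vector, that is, one constant on the relevant $G_i$-orbits of $\widehat{C}$ and compatible with the fixed total dimension $d_n$. The problem thus reduces to an arithmetic one: to find a single multiplicity vector simultaneously reachable from $\rho_1^{(n)}|_C$ and from $\rho_2^{(n)}|_C$ while conserving the total dimension exactly. This is precisely where the technical results on subsemigroups of $\mathbb{Z}^N$ enter; the normality of $C$ guarantees that the lattice of admissible dimension-preserving adjustments is rich enough that the two nearby, orbit-constant vectors can be steered to a common target with net change zero, whereas in the flexible setting one simply pads with extra dimensions to absorb the discrepancy.

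Once the two $C$-decompositions are made to coincide, I would conjugate $\rho_2^{(n)}$ by a unitary (resp. permutation) carrying $\rho_2^{(n)}|_C$ onto $\rho_1^{(n)}|_C$; this conjugator can be chosen close to the identity since the two restrictions were already close. Gluing the adjusted $\rho_1^{(n)}$ and $\rho_2^{(n)}$ along their common restriction to $C$ yields a genuine homomorphism $\rho^{(n)} \colon G \to U(d_n)$ (resp. $\Sym(d_n)$), and tracking the perturbations through the three steps shows $\rho^{(n)}$ stays asymptotically close to $\phi_n$ on the generators of $G_1$ and $G_2$, hence on all of $G$. For the HNN extension the same recipe applies with a single decomposing map: after matching the $C$- and $\varphi(C)$-decompositions of one corrected representation/action of $G_0$, the stable letter $t$ is realized by a unitary (resp. bijection) implementing $\varphi$, which completes the proof of $\mathu$-stability.
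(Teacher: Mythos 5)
Your proposal is correct and follows essentially the same route as the paper: stabilize the factors, use the subsemigroup arithmetic (the paper's Lemma~\ref{lem_laz}, with total dimension preserved because the pair of trivial classes lies in the kernel of the difference map) to steer both $H$-multiplicity vectors to a common admissible target, realize that target by a small perturbation on the same space via the key property of decomposing maps (Proposition~\ref{prop_stable}, which is exactly where stability of the factors enters), and finish with the conjugator argument (Proposition~\ref{prop_conj}), handling the HNN case with a single pair-valued decomposing map. The only differences are cosmetic: you phrase the reduction via asymptotic homomorphisms rather than the paper's stable-epimorphism framework (Proposition~\ref{st_and_st_ep_give_st}), and your opening observation that the amalgamated subgroup is normal in the whole amalgam is not actually used---what the argument needs is normality in each factor, via Proposition~\ref{about_A0}(2), which is what makes $\aleph$ linearly independent and the restriction map itself a decomposing map.
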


Let us also mention that in the recent paper \cite{arzhantseva2023constraint} Arzhantseva and P{\u{a}}unescu studied constraint stability and introduced the notion of action trace. This allowed them to find some $P$-stable groups among amalgamated free products over finite groups. We remark that our methods are vastly different and produce different results.
In Section 7.2 we discuss their approach and give some comments on the similar approach for $\mathu_{HS}$.

\subsection*{Acknowledgements}

During this research M.G.\ was supported by the DFG -- Project-ID 427320536 -- SFB 1442, and under Germany's Excellence Strategy EXC 2044 390685587, Mathematics M{\"u}nster: Dynamics--Geometry--Structure.

\section{Preliminaries}
\subsection{Stability of relations and groups}

Let us fix a class of metric groups $\mathu=\{\mathbf{C}\}$, where each group $\mathbf{C}\in\mathu$ is equipped with a fixed bi-invariant metric $d=d^\mathbf{C}$. 
All other groups that we consider will be countable and finitely generated by default.

\begin{definition}
    We will say that a group $G$ is $\mathu$-stable if for any sequence of maps
    $$\varphi_n \colon G \to \mathbf{C}_{n}\in\mathu \quad \text{satisfying}\quad
    \forall g,h\in G \quad d(\varphi_n(g)\cdot\varphi_n(h),\varphi_n(gh))\xrightarrow[n\to\infty]{} 0 $$
    there exists a sequence of homomorphisms
    $$\psi_n\colon G\to \mathbf{C}_{n}\quad \text{such that} \quad 
    \forall g\in G \quad d(\varphi_n(g),\psi_n(g))\xrightarrow[n\to\infty]{} 0.$$
\end{definition}

Different classes of metric groups $\mathu$ give rise to the different notions of stability. Let us define the classes we will be most interested in.  

\begin{enumerate}
    \item $\mathu_{op}=\{(\Aut(X),||\cdot||_{op}): X \text{ is a finite-dimensional Hilbert space}\}$ -- the class of finite-dimensional unitary groups with the operator norm (operator norm stability). The metric on unitary groups comes from the norm by the following formula
    $$d(A_1,A_2)=||A_1-A_2||_{op}.$$
    \item $\mathu_{HS}=\{(\Aut(X),||\cdot||_{HS}): X \text{ is a finite-dimensional Hilbert space}\}$ -- the class of finite-dimensional unitary groups with Hilbert-Schmidt norm (Hilbert-Schmidt stability, $HS$-stability or $\mathu_{HS}$-stability)
    $$||A||_{HS}=\Tra(A^*A)^{\frac{1}{2}},$$
    where $\Tra$ is the normalized trace. 
    \item $\mathu_{P}=\{(\Aut(X),d^{Hamm}): X \text{ is a finite set}\}$ -- the class of finite symmetric groups with the normalized Hamming distance (stability in permutations, $P$-stability or $\mathu_P$-stability)
    $$d^{Hamm}(\sigma_1,\sigma_2) = \frac{1}{|X|}\#\{x\in X: \sigma_1(x)\ne \sigma_2(x)\}.$$
\end{enumerate}

The notation for these classes of groups are intentionally made similar. We want to emphasize the fact that finite-dimensional unitary groups and finite symmetric groups are just automorphism groups of objects in the categories $\Cat_{HS}:=FdHilb$ (of finite-dimensional Hilbert spaces) and $\Cat_P:=FinSet$ (of finite sets), respectively. 

Very often our proofs for $\mathu_{HS}$ and $\mathu_P$ will be essentially the same. This motivated us to use the generic terminology and similar notations for these two cases. Throughout the paper we will use many notations that may have either $HS$ or $P$ as a lower index. If there are no indices in some statement, then this statement works both for $HS$-stability and $P$-stability. 

 For a group $\overline{G}$, any two homomorphisms $\varphi_1,\varphi_2\colon \overline{G}\to \Aut(X)\in\mathu$ and a finite set $B\subset \overline{G}$ we will define the distance between $\varphi_1$ and $\varphi_2$ over $B$ as
$$d_B(\varphi_1,\varphi_2):=\max_{b\in B} d(\varphi_1(b),\varphi_2(b)).$$
We will abuse the notations and write $1_X$ for the trivial homomorphism $1_X\colon\overline{G}\to \Aut(X)$ and for an identity automorphism $1_X\in\Aut(X)$. We will also make no difference between homomorphisms from a quotient group $\overline{G}/\langle\langle R\rangle\rangle$ and homomorphisms from $\overline{G}$ that map $R$ to identity. The following defintion of a stable epimorphism was introduced for epimorphisms given by finite sets of relations in \cite{lazarovich2021virtually}.

\begin{definition}\label{def_st_epi}
    Let us consider a group $\overline{G}$ generated by a finite set $S$. 
    We will say that an epimorphism $\pi\colon\overline{G}\to G$ is $\mathu$-stable if for any $\varepsilon>0$ there exist $\delta>0$ and a finite set $E\subseteq \ker \pi$ such that for any $\varphi\colon \overline{G}\to \Aut(X)\in\mathu$ with $d_E(\varphi, 1_X)\le\delta$ there exists $\psi\colon G\to \Aut(X)$ with $d_S(\varphi,\psi)\le\varepsilon$.
\end{definition}

The equivalent definition reformulation of $P$-stability for not necessarily finitely generated groups was given in \cite{ioana2020stability} (Lemma 3.1). The proof of equivalence works without any changes for stability with respect to any class of metric groups $\mathu$.  
Using Definition~\ref{def_st_epi} we can state this result for finitely generated groups as following.

\begin{prop}
    The group $G$ generated by a finite set $S$ is $\mathu$-stable if and only if the epimorphism $\pi\colon F_S\to G$ is $\mathu$-stable. 
\end{prop}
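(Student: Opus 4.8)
The plan is to prove the two implications separately, using the standard passage between the ``sequential'' formulation (the definition of a $\mathu$-stable group) and the ``$\varepsilon$--$\delta$'' formulation (the definition of a $\mathu$-stable epimorphism), specialized to $\overline{G}=F_S$. The bridge between the two is the following dictionary. A sequence of asymptotic homomorphisms $\varphi_n\colon G\to\mathbf{C}_n$ can be turned into genuine homomorphisms $\hat\varphi_n\colon F_S\to\mathbf{C}_n$ by freely assigning $\hat\varphi_n(s):=\varphi_n(\pi(s))$ on the generators $s\in S$; conversely, a homomorphism $\varphi\colon F_S\to\Aut(X)$ together with a set-theoretic section $a\mapsto w_a$ of $\pi$ yields a map $G\to\Aut(X)$, $a\mapsto\varphi(w_a)$. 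Throughout I will repeatedly use bi-invariance of $d$ to telescope products along words and to pass between $\varphi_n(g^{-1})$ and $\varphi_n(g)^{-1}$: note first that $d(\varphi_n(1_G),1_X)\to0$ (from $d(\varphi_n(1_G)^2,\varphi_n(1_G))\to0$) and hence $d(\varphi_n(g^{-1}),\varphi_n(g)^{-1})\to0$ for every fixed $g\in G$.

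Assume first that $\pi$ is $\mathu$-stable and let $\varphi_n\colon G\to\mathbf{C}_n$ be asymptotic homomorphisms. Form the free lifts $\hat\varphi_n$ as above. A straightforward induction on word length, using bi-invariance together with asymptotic multiplicativity and the inverse estimate, shows that for every \emph{fixed} $w\in F_S$ one has $d(\hat\varphi_n(w),\varphi_n(\pi(w)))\to0$; in particular $\hat\varphi_n(r)\to1_X$ for every $r\in\ker\pi$, since $\pi(r)=1_G$. Now fix a sequence $\varepsilon_k\downarrow0$. Stability of $\pi$ supplies $\delta_k>0$ and finite $E_k\subseteq\ker\pi$, which we may take increasing. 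Since each $E_k$ is finite and $\hat\varphi_n$ nearly kills each of its elements, for each $k$ there is $N_k$ (which we take strictly increasing) with $d_{E_k}(\hat\varphi_n,1_X)\le\delta_k$ for $n\ge N_k$, producing homomorphisms $\psi_n\colon G\to\mathbf{C}_n$ with $d_S(\hat\varphi_n,\psi_n\circ\pi)\le\varepsilon_k$. Choosing $\psi_n$ from the block $N_k\le n<N_{k+1}$ (the diagonal argument) gives $d_S(\hat\varphi_n,\psi_n\circ\pi)\to0$. As $\hat\varphi_n$ and $\psi_n\circ\pi$ are honest homomorphisms of $F_S$ agreeing up to $o(1)$ on the generators, they agree up to $o(1)$ on every fixed word; combined with $d(\hat\varphi_n(w),\varphi_n(\pi(w)))\to0$ this yields $d(\psi_n(a),\varphi_n(a))\to0$ for all $a\in G$, so $G$ is $\mathu$-stable.

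For the converse I argue by contraposition. If $\pi$ is not $\mathu$-stable there is $\varepsilon>0$ witnessing failure for all $\delta$ and all finite $E\subseteq\ker\pi$. Enumerate $\ker\pi=\{r_1,r_2,\dots\}$ and apply the failure with $E_n=\{r_1,\dots,r_n\}$ and $\delta_n=1/n$, obtaining homomorphisms $\varphi_n\colon F_S\to\mathbf{C}_n$ with $d_{E_n}(\varphi_n,1_X)\le1/n$ but admitting no $\psi\colon G\to\Aut(X)$ with $d_S(\varphi_n,\psi\circ\pi)\le\varepsilon$. Fix a section $a\mapsto w_a$ of $\pi$ and set $\phi_n(a):=\varphi_n(w_a)$. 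For fixed $a,b$ the element $c_{a,b}:=w_{ab}^{-1}w_aw_b$ lies in $\ker\pi$, hence equals some $r_m$; bi-invariance gives $d(\phi_n(a)\phi_n(b),\phi_n(ab))=d(\varphi_n(c_{a,b}),1_X)\le1/n$ once $n\ge m$, so $\phi_n$ is an asymptotic homomorphism of $G$. Stability of $G$ yields homomorphisms $\psi_n\colon G\to\mathbf{C}_n$ with $d(\phi_n(a),\psi_n(a))\to0$; since $w_{\pi(s)}s^{-1}\in\ker\pi$ for each generator $s$, the same enumeration estimate gives $d(\varphi_n(s),\psi_n(\pi(s)))\to0$, whence $d_S(\varphi_n,\psi_n\circ\pi)\to0<\varepsilon$ for large $n$, contradicting the choice of $\varphi_n$.

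The only genuinely delicate point is the uniformity of the bookkeeping: in both directions the key convergences---$\hat\varphi_n(w)\to\varphi_n(\pi(w))$ in one case, $\varphi_n(c_{a,b})\to1_X$ in the other---hold for each \emph{fixed} word but with no rate uniform across words. This is precisely why the $\varepsilon$--$\delta$ side must be fed through an exhaustion of $\ker\pi$ by the finite sets $E_n$ together with a diagonal selection, rather than through a single global estimate. Everything else is routine manipulation with the bi-invariant metric and the freeness of $F_S$.
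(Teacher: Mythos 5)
Your proof is correct and coincides with the approach the paper relies on: the paper gives no inline proof but cites Lemma~3.1 of \cite{ioana2020stability}, whose argument is exactly this one (lift asymptotic homomorphisms of $G$ to genuine homomorphisms of $F_S$, exhaust $\ker\pi$ by finite sets with a diagonal selection in one direction, and use a set-theoretic section of $\pi$ with an enumeration of $\ker\pi$ in the other). Nothing further is needed.
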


\subsection{Cones associated with groups}
A coproduct operation in the categories $\Cat_{HS}$ and $\Cat_{P}$ is a direct sum of Hilbert spaces and a disjoint union of sets, respectively. This operation allows us to construct for $\varphi_1\colon G \to \Aut(X_1)$ and $\varphi_2\colon G\to \Aut(X_2)$ a coproduct of these homomorphisms $\varphi_1\sqcup\varphi_2\colon G\to\Aut(X_1\coprod X_2)$, that is a direct sum of representations and an action on a disjoint union of sets, respectively.

Note that for any $Y\subseteq X\in \Cat$ there exists $Y^\perp\subseteq X$ such that $X=Y\coprod Y^\perp$. For a homomorphism $\varphi \colon G\to \Aut(X)$ and a $G$-invariant $Y\subseteq X$ we will call the corresponding restriction $\varphi|_X\colon G\to\Aut(X)$ a subhomomorphism. For any subhomomorphism $\varphi_1$ of $\varphi$ there exists another subhomomorphism $\varphi_1^\perp$ such that $\varphi=\varphi_1\sqcup\varphi_1^\perp$.

We will denote by $\Irr_{HS}(G)$ and $\Irr_P(G)$ sets of isomorphism classes of irreducible representations and transitive actions of a groups $G$, respectively. Let us denote the free abelian semigroup with the generators $\Irr(G)$ by $\rp(G)$.
$$\rp(G)=\bigoplus_{\tau \in \Irr(G)}\mathbb{Z}_{\ge 0}\tau .$$
Note that isomorphism classes of homomorphisms to $\Aut(X)\in\mathu$ correspond bijectively to elements of $\rp(G)$. 
For a homomorphism $\psi\colon G\to \Aut(X)$ we will denote the corresponding class $\psi^\#\in\rp(G)$.

We will denote by $\Lambda(G)$ the Grothendieck group of $\rp(G)$. 
$$\Lambda(G)=\bigoplus_{\tau \in \Irr(G)}\mathbb{Z}\tau .$$
We remark that $\Lambda_{P}(G)$ is the Burnside ring of a group $G$ and $\Lambda_{HS}(G)$ is the representation ring of $G$. 

We will denote by $1\in\Lambda(G)$ the class of one-dimensional trivial representation and the class of one point action, respectively. A tensor product operation in the respective categories gives a multiplication operation on $\Lambda(G)$ and $1$ is a multiplicative identity.

We will equip $\Lambda(G)$ with a norm $||\cdot||$ that is defined as following: 
$$\text{ for } \, v = \sum_{\tau \in\Irr(G)}\lambda_{\tau}\tau \quad 
||v|| :=  \sum_{\tau\in\Irr(G)}|\lambda_{\tau}|\cdot ||\tau||,$$
where $||\tau||$ is the dimension of the corresponding Hilbert space or the size of the corresponding set, respectively. For a homomorphism $\psi\colon G\to\Aut(X)$ we also have $||\psi^\#||=|X|$, where $|X|$ is the dimension or the size of $X$, respectively. Let us mention that for a finite group $H$ and $\tau\in\Irr(H)$ we always have $||\tau||\le |H|$. 

Standard theorems state that
each homomorphism $\psi\colon G\to \Aut(X)$ with $\psi^\#=\sum\limits_{\tau\in\Irr(G)}\lambda_\tau \tau$ has a canonical decomposition
    $$\psi = \coprod_{\tau \in\Irr(G)}\psi_\tau, 
    \quad \text{where} \quad 
    \psi_\tau^\#=\lambda_\tau \tau.$$

For $v,v'\in \Lambda(G)$ we define $\min(v,v')\in\Lambda(G)$ to be the element with each coordinate equal to the minimum of the coordinates of $v$ and $v'$. For homomorphisms $\psi_1\colon G\to \Aut(X)$ and $\psi_2\colon G \to \Aut(Y)$ the element $\min(\psi_1^\#,\psi_2^\#)$ corresponds to the maximal isomorphic subhomomorphisms of $\psi_1$ and $\psi_2$. 

For $v, v'\in \Lambda(G)$ we will say that $v\le v'$ if each coordinate of $v$ is not greater than the same coordinate of $v'$. 
Sometimes it will be convenient to consider ``$\min$'' and ``$\le$'' with respect to some other nonstandard generating set.

\subsection{Flexible stability}

For $\mathu_{HS}$ and $\mathu_P$ one can define a notion of (very) flexible stability that seems to be less restrictive, however, there is yet no evidence for this. We denote by $B(X)$ the space of all bounded operators on a Hilbert space $X$.

\begin{definition}\label{flexibleHS} 
    We will say that a group $G$ is very flexibly $HS$-stable if for any sequence of maps $\varphi_n \colon G \to \Aut(X_n)\in\mathu_{HS}$ satisfying 
    $$\forall g,h\in G \quad d(\varphi_n(g)\cdot\varphi_n(h),\varphi_n(gh))\xrightarrow[n\to\infty]{} 0 $$
    there exists a sequence of homomorphisms $\psi_n\colon G\to \Aut(Y_{n})$ for some $Y_n\supseteq X_n$ such that 
    $$\forall g\in G \quad ||\varphi_n(g)-p\psi_n(g)p||_{HS,X_n}\xrightarrow[n\to\infty]{} 0,$$
    where $p\in B(X_n)$ is a projection onto $Y_n$.

    If one can always find such $Y_n$ and $\psi_n$ with $|Y_n|/|X_n|\xrightarrow[n\to\infty]{} 1$, we will say that $G$ is flexibly $HS$-stable.
\end{definition}

\begin{definition}\label{flexibleP}   
    We will say that a group $G$ is very flexibly $P$-stable if for any sequence of maps $\varphi_n \colon G \to \Aut(X_n)\in\mathu_{HS}$ satisfying 
    $$\forall g,h\in G \quad d(\varphi_n(g)\cdot\varphi_n(h),\varphi_n(gh))\xrightarrow[n\to\infty]{} 0 $$ 
    there exists a sequence of homomorphisms $\psi_n\colon G\to \Aut(Y_{n})$ for some $Y_n\supseteq X_n$ such that
    $$\forall g\in G\quad  \frac{1}{|X_n|}\cdot \#\{x\in X_n: \varphi_n(g)(x)\ne \psi_n(g)(x)\} \xrightarrow[n\to\infty]{}0.$$

    If one can always find such $Y_n$ and $\psi_n$ with $|Y_n|/|X_n|\xrightarrow[n\to\infty]{} 1$, we will say that $G$ is flexibly $P$-stable. 
\end{definition}

These definitions of flexible stability look very different. We will give an equivalent definition that works in both cases. For this we will need to introduce a new operation, namely, a restriction to a subobject. 

For $\mathu_{P}$ the construction is straightforward. 
For a subset $X\subseteq Y$ and a permutation $a\in \Aut(Y)$ we define $a|_X\in \Aut(X)$ to be any permutation such that $a|_{X}(x)=a(x)$ for all $x\in X\cap a^{-1}X$. Note that 
\begin{equation}\label{eq1}
    \frac{1}{|X|}\cdot \#\{x\in X: a|_X(x)\ne a(x)\}\le \frac{|Y|-|X|}{|X|}.
\end{equation}

For $\mathu_{HS}$ we will first need to mention some preliminary results. The next lemma gives us the following useful inequality between the Hilbert-Schmidt norm and the operator norm (see, for example, Lemma 6.1 in \cite{gowers2017inverse}). 

\begin{lemma}\label{ineq_HS}
    For $a,b,c\in B(X)$ we have
    $$||abc||_{HS}\le ||a||_{op}\cdot||b||_{HS}\cdot||c||_{op}.$$
\end{lemma}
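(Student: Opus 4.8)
The plan is to reduce the three-factor inequality to two applications of a single two-factor bound, namely the claim that $||xy||_{HS}\le ||x||_{op}\cdot ||y||_{HS}$ for all $x,y\in B(X)$. Granting this, one writes $||abc||_{HS}=||a(bc)||_{HS}\le ||a||_{op}\cdot ||bc||_{HS}$, so it remains only to control $||bc||_{HS}$ by $||b||_{HS}\cdot ||c||_{op}$. For this last step I would pass to adjoints: since the Hilbert--Schmidt norm and the operator norm are both invariant under taking adjoints, $||bc||_{HS}=||(bc)^*||_{HS}=||c^*b^*||_{HS}\le ||c^*||_{op}\cdot ||b^*||_{HS}=||c||_{op}\cdot ||b||_{HS}$, where the inequality is again the two-factor bound applied to $c^*$ and $b^*$. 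Multiplying the two estimates then gives exactly $||abc||_{HS}\le ||a||_{op}\cdot ||b||_{HS}\cdot ||c||_{op}$.

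It therefore suffices to establish $||xy||_{HS}\le ||x||_{op}\cdot ||y||_{HS}$. The cleanest route is to compute the squared norm via the trace: $||xy||_{HS}^2=\Tra((xy)^*xy)=\Tra(y^*(x^*x)y)$. The operator $x^*x$ is positive with largest eigenvalue $||x||_{op}^2$, so $||x||_{op}^2\cdot 1_X-x^*x\ge 0$; conjugating a positive operator by $y$ preserves positivity, whence $y^*(||x||_{op}^2\cdot 1_X-x^*x)y\ge 0$. Taking the (normalized) trace, which is nonnegative on positive operators, yields $\Tra(y^*(x^*x)y)\le ||x||_{op}^2\cdot\Tra(y^*y)=||x||_{op}^2\cdot ||y||_{HS}^2$, and taking square roots completes the two-factor bound.

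An alternative and perhaps more transparent argument is to proceed column by column: writing $y$ with columns $y_1,\dots,y_m$, one has $||y||_{HS}^2=\tfrac{1}{|X|}\sum_i ||y_i||^2$, while $xy$ has columns $xy_i$ satisfying $||xy_i||\le ||x||_{op}\cdot ||y_i||$, so the bound follows by summing. I expect no genuine obstacle here, as the statement is a standard fact; the only points deserving a little care are that the normalization of $\Tra$ is harmless (the same power of $1/|X|$ appears on both sides and cancels after taking the square root) and that the operator and Hilbert--Schmidt norms are truly adjoint-invariant, which is immediate from their definitions.
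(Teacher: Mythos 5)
Your proof is correct. Note, however, that the paper does not prove this lemma at all: it is quoted directly from the literature (Lemma 6.1 in Gowers--Hatami), so there is no internal argument to compare against. What you supply is a complete, self-contained verification: the reduction of the three-factor inequality to the two-factor bound $||xy||_{HS}\le ||x||_{op}\cdot||y||_{HS}$, the passage to adjoints using the adjoint-invariance of both norms, and the trace argument --- positivity of $||x||_{op}^2\cdot 1_X-x^*x$ is preserved under the congruence $z\mapsto y^*zy$, and the normalized trace is nonnegative on positive operators, giving $\Tra(y^*x^*xy)\le ||x||_{op}^2\Tra(y^*y)$ --- are all valid, and your remark that the normalization $\frac{1}{|X|}$ cancels is the right point of care. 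This is the standard proof of this standard fact, and it fills in exactly what the paper delegates to the reference.
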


The following lemma gives a bound for a distance from some operator to a unitary one (see Lemma 2.2 in \cite{akhtiamov2020uniform}). We will use a little bit more data from the proof of this lemma
and will change its statement accordingly. 

\begin{lemma}\label{close_to_a_unitary}
    For the singular decomposition $b=u\sigma v$ of an operator $b\in B(X)$ we have
    $$||b-uv||_{HS}\le||b^*b-1_X||_{HS}.$$
\end{lemma}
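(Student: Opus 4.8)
The plan is to diagonalize everything via the singular value decomposition and reduce the operator inequality to a scalar one. The key structural fact I would use is that the Hilbert--Schmidt norm is invariant under multiplication by unitaries on both sides: Lemma~\ref{ineq_HS} gives $||uAv||_{HS}\le||u||_{op}\cdot||A||_{HS}\cdot||v||_{op}=||A||_{HS}$ whenever $u,v$ are unitary, and applying the same bound to $A=u^{-1}(uAv)v^{-1}$ yields the reverse inequality, hence equality.

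Using this invariance, I would rewrite both sides separately. Since $b=u\sigma v$ with $u,v$ unitary, we have $b-uv=u(\sigma-1_X)v$, so $||b-uv||_{HS}=||\sigma-1_X||_{HS}$. For the right-hand side, I would compute $b^*b=v^*\sigma u^*u\sigma v=v^*\sigma^2 v$, using that $\sigma$ is a nonnegative diagonal matrix so that $\sigma^*=\sigma$; hence $b^*b-1_X=v^*(\sigma^2-1_X)v$ and $||b^*b-1_X||_{HS}=||\sigma^2-1_X||_{HS}$.

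Both quantities are now Hilbert--Schmidt norms of diagonal matrices, so the claim reduces to the entrywise scalar inequality $\sum_i(\sigma_i-1)^2\le\sum_i(\sigma_i^2-1)^2$, where the $\sigma_i\ge0$ are the singular values. I would verify this term by term: for each $s=\sigma_i\ge0$ one has $(s^2-1)^2=(s-1)^2(s+1)^2\ge(s-1)^2$ because $(s+1)^2\ge1$ when $s\ge0$; summing over $i$ (with the common normalization factor of the trace cancelling) gives the result. I do not expect a genuine obstacle here, since the whole statement is essentially a computation; the only two points that need care are the justification of unitary invariance (derived above from Lemma~\ref{ineq_HS}) and the hypothesis $\sigma_i\ge0$, which is exactly what makes the factor $(s+1)^2\ge1$, and hence the termwise comparison, valid.
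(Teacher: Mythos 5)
Your proof is correct. The paper does not prove this lemma itself --- it only cites Lemma 2.2 of \cite{akhtiamov2020uniform} --- and your argument (unitary bi-invariance of the Hilbert--Schmidt norm, deduced from Lemma~\ref{ineq_HS}, reducing both sides to diagonal matrices, then the termwise scalar inequality $(s-1)^2\le (s^2-1)^2$ valid for $s\ge 0$) is exactly the standard proof of that cited lemma, so it matches the intended argument with no gaps.
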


For $\mathu_{HS}$ the construction of the restriction is the following. For a subspace $X\subseteq Y$ and an operator $a\in \Aut(Y)$ we define $a|_X:=uv$, where $pap=u\sigma v$ is a singular decomposition in $B(X)$ and $p\in B(Y)$ is a projection onto $X$. Lemma~\ref{close_to_a_unitary} and the fact that for $b\in B(X)\subseteq B(Y)$ we have 
$||b||_{HS,X}=||b||_{HS,Y}\sqrt{\frac{|Y|}{|X|}}$  
give us
$$||pap-a|_X||_{HS,X}\le ||(pap)^*pap-1_X||_{HS,X}\le 
(||(pap)^*pap-a^*a||_{HS,Y}+||1_Y-1_X||_{HS,Y})\sqrt{\frac{|Y|}{|X|}}.$$

Applying Lemma~\ref{ineq_HS}, Lemma~\ref{close_to_a_unitary} and the fact that $||p-1_Y||_{HS,Y}=\sqrt{\frac{|Y|-|X|}{|Y|}}$ we get
\begin{equation}\label{eq2}
    ||pap-a|_X||_{HS,X}\le 2||pap-a||_{HS,Y}\sqrt{\frac{|Y|}{|X|}}+\sqrt{\frac{|Y|-|X|}{|X|}}\le 5 \sqrt{\frac{|Y|-|X|}{|X|}}.
\end{equation}

For a group $G$ with a fixed generating set $S$ and for a homomorphism $\varphi\colon G\to \Aut(Y)$ we construct the restriction $\varphi|_{X}\colon F_S\to \Aut(X)$ on not necessarily $G$-invariant $X\supseteq Y$ by taking $\varphi|_{X}(s)=\varphi(s)|_X$.
Note that if $X\subseteq Y$ is $G$-invariant for $\varphi\colon G\to \Aut(Y)$ then $\varphi|_X$ coincides with the usual constructions of action/representation restriction. 

Inequalities (\ref{eq1}) and (\ref{eq2}) already allow us to reformulate the definitions of flexible stability in the following way. 

\begin{definition}
        We will say that a group $G$ is flexibly $\mathu$-stable if for any sequence of maps $\varphi_n \colon G \to \Aut(X_n)\in\mathu$ satisfying 
    $$\forall g,h\in G \quad d(\varphi_n(g)\cdot\varphi_n(h),\varphi_n(gh))\xrightarrow[n\to\infty]{} 0 $$ 
    there exists a sequence of homomorphisms $\psi_n\colon G\to \Aut(Y_{n})$ for some $Y_n\supseteq X_n$ such that
    $$\forall g\in G\quad  d(\varphi_n(g),\psi_n|_{X_n}(g)) \xrightarrow[n\to\infty]{} 0,\quad \frac{|Y_n|}{|X_n|}\xrightarrow[n\to\infty]{} 1.$$ 
\end{definition}

Now we will reformulate this definition in an equivalent way as we did with the definition of stability.  

\begin{definition}\label{def:stable_epi}
    Let us consider a group $\overline{G}$ generated by a finite set $S$. We will say that an epimorphism $\pi\colon\overline{G}\to G$ is flexibly $\mathu$-stable $(\mathu=\mathu_{P}$ or $\mathu=\mathu_{HS})$ if for any $\varepsilon>0$ there exist $\delta>0$ and a finite set $E\subseteq \ker \pi$ such that for any $\varphi\colon \overline{G}\to \Aut(X)\in\mathu$ with $d_E(\varphi, 1)\le\delta$ there exist $Y\supseteq X$ and $\psi\colon G\to \Aut(Y)$ satisfying $d_S(\varphi,\psi|_X)\le\varepsilon$ and $|Y|\le(1+\varepsilon)|X|$.
\end{definition}

One can repeat the proof of Lemma 3.1 from \cite{ioana2020stability} without any significant changes to get the following. 

\begin{prop}
    The group $G$ generated by a finite set $S$ is flexibly $\mathu$-stable if and only if the epimorphism $\pi\colon F_S\to G$ is flexibly $\mathu$-stable. 
\end{prop}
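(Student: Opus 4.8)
\section*{Proof proposal}

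The plan is to follow the scheme of Lemma~3.1 in \cite{ioana2020stability} (the same scheme that proves the non-flexible Proposition above) and to isolate the one genuinely new ingredient, namely the restriction operation $\psi|_X$ together with the controlled enlargement $|Y|\le(1+\varepsilon)|X|$. Throughout I will use freely that $\mathu$ consists of groups with a bi-invariant (hence inversion-invariant) metric, so that $d(ab,a'b')\le d(a,a')+d(b,b')$ and the telescoping bound $d\bigl(\prod_i a_i,\prod_i b_i\bigr)\le\sum_i d(a_i,b_i)$ hold; and that any asymptotic homomorphism $\theta_n\colon G\to\Aut(X_n)$ satisfies $\theta_n(e)\to 1$, $\theta_n(g)^{-1}\to\theta_n(g^{-1})$ and $\theta_n\bigl(\prod_i g_i\bigr)\to\prod_i\theta_n(g_i)$ for every fixed finite product. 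These are routine consequences of the defining property and I will invoke them without comment.

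For the direction ``$\pi$ flexibly $\mathu$-stable $\Rightarrow$ $G$ flexibly $\mathu$-stable'', let $\varphi_n\colon G\to\Aut(X_n)$ be an asymptotic homomorphism. Since $F_S$ is free, lift to honest homomorphisms $\tilde\varphi_n\colon F_S\to\Aut(X_n)$ by setting $\tilde\varphi_n(s):=\varphi_n(\pi(s))$ for $s\in S$. For a fixed $w\in\ker\pi$ the telescoping and inversion estimates give $\tilde\varphi_n(w)\to\varphi_n(\pi(w))=\varphi_n(e)\to 1$, so $\tilde\varphi_n\to 1$ pointwise on $\ker\pi$. Now run the diagonal argument: for each $k$ take the data $\delta_k$ and finite $E_k\subseteq\ker\pi$ from the definition applied to $\varepsilon=1/k$; since $E_k$ is finite there is $N_k$ with $d_{E_k}(\tilde\varphi_n,1)\le\delta_k$ for $n\ge N_k$, producing $Y_n\supseteq X_n$ and $\psi_n\colon G\to\Aut(Y_n)$ with $d_S(\tilde\varphi_n,\psi_n|_{X_n})\le 1/k$ and $|Y_n|\le(1+1/k)|X_n|$. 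Choosing $N_1<N_2<\cdots$ and assigning parameter $1/k$ on each block $[N_k,N_{k+1})$ yields $d_S(\tilde\varphi_n,\psi_n|_{X_n})\to 0$ and $|Y_n|/|X_n|\to 1$. It remains to pass from $S$ to an arbitrary $a\in G$: by the estimate of the last paragraph $\psi_n|_{X_n}$ is itself an asymptotic homomorphism of $G$ once $|Y_n|/|X_n|\to 1$, and two asymptotic homomorphisms agreeing asymptotically on the generating set $S$ agree asymptotically on every fixed $a\in G$ (evaluate $a$ as a fixed word and telescope), whence $d(\varphi_n(a),\psi_n|_{X_n}(a))\to 0$.

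For the converse, assume $G$ is flexibly $\mathu$-stable but, for contradiction, that $\pi$ is not. Then there is $\varepsilon_0>0$ so that, taking $\delta=1/n$ and an exhaustion $E_1\subseteq E_2\subseteq\cdots$ of the countable group $\ker\pi$, we obtain homomorphisms $\varphi_n\colon F_S\to\Aut(X_n)$ with $d_{E_n}(\varphi_n,1)\le 1/n$ that admit no $Y\supseteq X_n$ and $\psi\colon G\to\Aut(Y)$ with $d_S(\varphi_n,\psi|_{X_n})\le\varepsilon_0$ and $|Y|\le(1+\varepsilon_0)|X_n|$. Since every fixed $w\in\ker\pi$ eventually lies in $E_n$, the maps $\varphi_n$ descend, via any set-section of $\pi$, to an asymptotic homomorphism $\bar\varphi_n\colon G\to\Aut(X_n)$ agreeing with $\varphi_n$ on $S$. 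Flexible $\mathu$-stability of $G$ then yields $\psi_n\colon G\to\Aut(Y_n)$, $Y_n\supseteq X_n$, with $d(\bar\varphi_n(a),\psi_n|_{X_n}(a))\to 0$ for all $a$ and $|Y_n|/|X_n|\to 1$; restricting to $a\in S$ gives $d_S(\varphi_n,\psi_n|_{X_n})\to 0$ and $|Y_n|\le(1+\varepsilon_0)|X_n|$ for large $n$, contradicting the choice of $\varphi_n$.

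The single point requiring care, and the only departure from the rigid case, is that $\psi|_X$ is \emph{not} a homomorphism, so its multiplicativity on words must be recovered quantitatively. This is exactly what inequalities (\ref{eq1}) and (\ref{eq2}) supply: for a homomorphism $\psi\colon G\to\Aut(Y)$ and $X\subseteq Y$ they bound $d(\psi|_X(g),p\psi(g)p)$ by a quantity that tends to $0$ as $|Y|/|X|\to 1$ (and vanishes when $X$ is $G$-invariant). Combining this with $p\psi(w)p=\prod_i p\psi(s_i^{\pm1})p$, paying one defect per letter, shows that $\psi_n|_{X_n}$ is an asymptotic homomorphism whenever $|Y_n|/|X_n|\to 1$, with error on a fixed word $w$ of the shape $O(\ell_w\cdot\text{defect}_n)\to 0$. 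This estimate is what closes the propagation step in both directions; it is where bi-invariance and the boundedness of the (fixed) word length are essential, while the remaining bookkeeping is identical to \cite{ioana2020stability}.
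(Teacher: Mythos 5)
Your proposal is correct and takes essentially the same route as the paper: the paper's ``proof'' is just the assertion that one can repeat Lemma~3.1 of \cite{ioana2020stability}, and your lift-and-diagonalize direction, your contradiction argument via an exhaustion of $\ker\pi$ and a set-section, and your use of (\ref{eq1}), (\ref{eq2}) and Lemma~\ref{lem_restr_useful} to make $\psi|_X$ an almost homomorphism are precisely the ``insignificant changes'' the paper has in mind (compare Lemma~\ref{lem_almost_hom}). One cosmetic remark: the displayed identity $p\psi(w)p=\prod_i p\psi(s_i^{\pm1})p$ in your last paragraph is not literally true for $\mathu_{HS}$, but the per-letter telescoping bound you actually invoke is the correct statement, so this does not affect the argument.
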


\subsection{Properties of the restriction}

To simplify the formulas we will use the constants $s_{P}=1$ and $s_{HS}=\frac{1}{2}$. 
We will write just $s$ when the statement holds both for $\mathu_P$ and $\mathu_{HS}$.
Also it will be convenient to write $f\preceq g$ when $f\le C\cdot g$ for some constant $C$ (that depends only on groups and their inclusions).    

For $a_1,a_2\in\Aut(X_1)$ and $b\in \Aut(X_2)$ we have  
$$d(a_1\sqcup b,a_2\sqcup b)=d(a_1,a_2)\cdot \left(\frac{|X_1|}{|X_1|+|X_2|}\right)^s.$$
As an immediate corollary we get the following lemma. 

\begin{lemma}\label{property_sums}
For two collections of elements $a_i,b_i\colon G\to \Aut(X_i)$, $i=1,\ldots, k$ we have 
$$d\left(\coprod_{i=1}^k a_i,\coprod_{i=1}^k b_i\right)\le \sum_{i=1}^k d(a_i,b_i)\cdot\left(\frac{|X_i|}{\sum_i |X_i|}\right)^s.$$
\end{lemma}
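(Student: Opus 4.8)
The plan is to deduce Lemma~\ref{property_sums} directly from the single-pair identity
$$d(a_1\sqcup b,a_2\sqcup b)=d(a_1,a_2)\cdot\left(\frac{|X_1|}{|X_1|+|X_2|}\right)^s$$
that immediately precedes it, combined with the triangle inequality for the bi-invariant metric $d$. The key observation is that $\coprod_{i=1}^k a_i$ and $\coprod_{i=1}^k b_i$ differ in each of the $k$ blocks, so I want to interpolate between them one block at a time.

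First I would set up a telescoping chain of intermediate elements. For $j=0,1,\dots,k$ define the ``hybrid'' automorphism
$$c_j:=\Bigl(\coprod_{i=1}^{j} b_i\Bigr)\sqcup\Bigl(\coprod_{i=j+1}^{k} a_i\Bigr)\in\Aut\Bigl(\coprod_{i=1}^k X_i\Bigr),$$
so that $c_0=\coprod_{i=1}^k a_i$ and $c_k=\coprod_{i=1}^k b_i$. By the triangle inequality, $d(c_0,c_k)\le\sum_{j=1}^k d(c_{j-1},c_j)$, and each consecutive pair $c_{j-1},c_j$ differs only in the single $j$-th block, where $a_j$ is replaced by $b_j$.

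Next I would evaluate each term $d(c_{j-1},c_j)$ using the single-pair identity. Here the element being replaced lives in the block $X_j$, while everything else — namely the direct sum over all blocks $i\ne j$ — plays the role of the common summand ``$b$'' of total size $\sum_{i\ne j}|X_i|$. Applying the displayed identity with $X_1\leftrightarrow X_j$ and the complementary block of size $\sum_{i\ne j}|X_i|$ gives
$$d(c_{j-1},c_j)=d(a_j,b_j)\cdot\left(\frac{|X_j|}{\sum_{i=1}^k|X_i|}\right)^s.$$
Summing over $j$ and invoking the triangle bound yields exactly the claimed inequality. The proof is a short induction/telescoping argument, so I do not expect any genuine obstacle; the only point requiring a little care is confirming that the single-pair formula applies when the ``fixed'' summand is itself a coproduct of several blocks rather than a single one — but this is immediate, since the formula depends on that summand only through its total size $|X_2|$, and the coproduct operation is associative. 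Thus the apparent difficulty is purely bookkeeping, and the statement follows cleanly.
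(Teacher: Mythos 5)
Your proof is correct and is exactly the argument the paper intends: the lemma is stated there as an immediate corollary of the single-pair identity, and your telescoping chain of hybrids $c_j$ combined with the triangle inequality is the standard way to spell that out. The one point you flag --- that the fixed summand may itself be a coproduct and sit in any position --- is indeed harmless, since both $d^{Hamm}$ and $\|\cdot\|_{HS}$ are insensitive to the ordering of blocks.
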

\begin{remark}
\label{rem:good_restriction}
    Directly from the restriction construction follows that for $a_i\in \Aut(X_i)$ and $Y_i\subseteq X_i$ we have 
    $$\left(\coprod_{i=1}^k a_i\middle)\right|_{\coprod Y_i}=\coprod_{i=1}^k a_i|_{Y_i}.$$
\end{remark}
The next lemma gives a very useful inequality.  
\begin{lemma}\label{lem_restr_useful}
    For $a\in\Aut(X)$ and $Y\subseteq X$ we have
    $$d(a,a|_Y\sqcup 1_{Y^\perp})\preceq \left(\frac{|X|-|Y|}{|X|}\right)^s.$$
\end{lemma}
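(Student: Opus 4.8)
The plan is to treat the two classes separately, since the restriction $a|_Y$ is defined differently for $\mathu_P$ and $\mathu_{HS}$. In both cases the underlying idea is the same: $a|_Y\sqcup 1_{Y^\perp}$ is the ``block-diagonal'' part of $a$ with respect to the splitting $X=Y\sqcup Y^\perp$, and its discrepancy from $a$ is governed entirely by the mass that $a$ moves between $Y$ and $Y^\perp$, which is of order $|X|-|Y|$. So the target bound is really a statement that $a$ almost preserves the decomposition $X=Y\sqcup Y^\perp$ up to this small mass.

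For $\mathu_P$ (so $s=1$) I would simply count the points on which $a$ and $a|_Y\sqcup 1_{Y^\perp}$ disagree. On $Y^\perp$ the second map is the identity, so a disagreement at $x\in Y^\perp$ forces $a(x)\ne x$, and there are at most $|Y^\perp|=|X|-|Y|$ such points. On $Y$ we have $a|_Y(x)=a(x)$ whenever $x\in Y\cap a^{-1}Y$ by the definition of the restriction, so a disagreement at $x\in Y$ forces $a(x)\notin Y$; since $a$ is injective there are again at most $|X|-|Y|$ such points. Hence the number of disagreements is at most $2(|X|-|Y|)$, giving $d^{Hamm}\le 2\tfrac{|X|-|Y|}{|X|}$, which is the claim (this is essentially the content of \eqref{eq1} applied to both blocks).

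For $\mathu_{HS}$ (so $s=\tfrac12$) let $p,q$ be the orthogonal projections onto $Y,Y^\perp$, so that $1_{Y^\perp}=q$ as an operator on $X$ and the target operator is $b:=a|_Y+q$. Writing $a=pap+paq+qap+qaq$, I would estimate term by term
$$\|a-b\|_{HS,X}\le \|pap-a|_Y\|_{HS,X}+\|paq\|_{HS,X}+\|qap\|_{HS,X}+\|qaq-q\|_{HS,X}.$$
For the first term I would feed the singular-value estimate \eqref{eq2} (with the roles of the two spaces exchanged, so that $p$ projects onto $Y\subseteq X$) into the conversion $\|c\|_{HS,X}=\|c\|_{HS,Y}\sqrt{|Y|/|X|}$ valid for $c\in B(Y)$; this turns the bound $5\sqrt{(|X|-|Y|)/|Y|}$ into $5\sqrt{(|X|-|Y|)/|X|}$. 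For the remaining three terms the key observation is that $q$ is a projection of rank $|X|-|Y|$, so $\|q\|_{HS,X}=\sqrt{(|X|-|Y|)/|X|}$; applying Lemma~\ref{ineq_HS} with $q$ placed in the Hilbert--Schmidt slot and the contractive factors $pa$, $ap$, $qa$ (and the identity) in the operator-norm slots bounds each of $\|paq\|_{HS,X}$, $\|qap\|_{HS,X}$, $\|qaq\|_{HS,X}$ by $\|q\|_{HS,X}$, whence $\|qaq-q\|_{HS,X}\le 2\|q\|_{HS,X}$. Summing yields $\|a-b\|_{HS,X}\le 9\sqrt{(|X|-|Y|)/|X|}$, as required.

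I expect the only genuine subtlety to lie in the $\mathu_{HS}$ off-diagonal and complement blocks: one must choose the grouping in Lemma~\ref{ineq_HS} so that the low-rank projection $q$ (rather than the unitary $a$, whose Hilbert--Schmidt norm is $1$) occupies the Hilbert--Schmidt position, since only then does the estimate pick up the small factor $\sqrt{(|X|-|Y|)/|X|}$ instead of $1$. The bookkeeping with the conversion factor $\sqrt{|Y|/|X|}$ when transporting \eqref{eq2} from $B(Y)$ to $B(X)$ is the other place to be careful, but it is routine.
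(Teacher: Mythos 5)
Your proof is correct and takes essentially the same route as the paper's: the paper also treats $\mathu_P$ by direct counting (stated there as ``clear'') and, for $\mathu_{HS}$, uses Lemma~\ref{ineq_HS} to bound the off-diagonal/complement contributions (packaged as $||a-pap||_{HS,X}\le 2||1_X-p||_{HS,X}$ rather than your three separate blocks) and then invokes inequality (\ref{eq2}) to replace $pap$ by $a|_Y$, exactly as you do. The only difference is bookkeeping (your constant $9$ versus the paper's implicit $8$), which is immaterial under $\preceq$.
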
 
\begin{proof}
    The statement is clear for $\mathu_P$. For $\mathu_{HS}$ we see from Lemma~\ref{ineq_HS} that 
    $$||a-pap||_{HS,X}\le 2||p-1_X||=2\sqrt{\frac{|X|-|Y|}{|X|}},$$
    where $p$ is the projection onto $Y$. And the statement follows from this inequality and (\ref{eq2}).
\end{proof}

As an immediate corollaries of Lemma~\ref{lem_restr_useful} we get the following three lemmas.

\begin{lemma}\label{lem_double_restr}
    For $a\in \Aut(X)$ and $Z\subseteq Y\subseteq X$ we have 
    $$d((a|_Y)|_Z,a|_Z)\preceq \left(\frac{|X|-|Z|}{|Z|}\right)^s.$$
\end{lemma}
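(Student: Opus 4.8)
The plan is to use $a$ itself as a common reference point and to reduce everything to two applications of Lemma~\ref{lem_restr_useful}. Both $(a|_Y)|_Z$ and $a|_Z$ live in $\Aut(Z)$, so first I would pass to $\Aut(X)$ by padding with the identity on $Z^\perp = X\setminus Z$. By the coproduct scaling identity $d(a_1\sqcup b, a_2\sqcup b)=d(a_1,a_2)\,(|X_1|/(|X_1|+|X_2|))^s$ underlying Lemma~\ref{property_sums}, one has
$$d\big((a|_Y)|_Z, a|_Z\big)=\Big(\tfrac{|X|}{|Z|}\Big)^s\, d\big((a|_Y)|_Z\sqcup 1_{Z^\perp},\ a|_Z\sqcup 1_{Z^\perp}\big),$$
so it suffices to bound the padded distance in $\Aut(X)$ by $\preceq (\tfrac{|X|-|Z|}{|X|})^s$.

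Next I would apply the triangle inequality with $a$ as the hub:
$$d\big((a|_Y)|_Z\sqcup 1_{Z^\perp},\, a|_Z\sqcup 1_{Z^\perp}\big)\le d\big((a|_Y)|_Z\sqcup 1_{Z^\perp},\, a\big)+d\big(a,\, a|_Z\sqcup 1_{Z^\perp}\big).$$
The second term is exactly the content of Lemma~\ref{lem_restr_useful} applied to $a$ and $Z\subseteq X$, giving $\preceq (\tfrac{|X|-|Z|}{|X|})^s$. For the first term I would insert the intermediate point $a|_Y\sqcup 1_{Y^\perp}$: Lemma~\ref{lem_restr_useful} applied to $a$ and $Y\subseteq X$ controls $d(a, a|_Y\sqcup 1_{Y^\perp})\preceq(\tfrac{|X|-|Y|}{|X|})^s$, while the remaining comparison of $a|_Y\sqcup 1_{Y^\perp}$ with $(a|_Y)|_Z\sqcup 1_{Z^\perp}$ takes place entirely inside $Y$, since both maps are the identity on $Y^\perp$. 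There it is the distance between $a|_Y$ and $(a|_Y)|_Z\sqcup 1_{Y\setminus Z}$ (with $Y\setminus Z$ the complement of $Z$ in $Y$), which Lemma~\ref{lem_restr_useful} for $a|_Y\in\Aut(Y)$ and $Z\subseteq Y$ bounds by $\preceq(\tfrac{|Y|-|Z|}{|Y|})^s$; rescaling by $(\tfrac{|Y|}{|X|})^s$ because of the common summand $1_{Y^\perp}$ turns this into $\preceq(\tfrac{|Y|-|Z|}{|X|})^s$.

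Finally I would combine the three error terms. Since $|X|-|Y|\le|X|-|Z|$ and $|Y|-|Z|\le|X|-|Z|$, each of $(\tfrac{|X|-|Y|}{|X|})^s$, $(\tfrac{|Y|-|Z|}{|X|})^s$ and $(\tfrac{|X|-|Z|}{|X|})^s$ is $\preceq(\tfrac{|X|-|Z|}{|X|})^s$, so the padded distance is $\preceq(\tfrac{|X|-|Z|}{|X|})^s$; multiplying by the scaling factor $(\tfrac{|X|}{|Z|})^s$ yields $(\tfrac{|X|-|Z|}{|Z|})^s$, as claimed. The only genuinely delicate point — and the reason one cannot simply write $(a|_Y)|_Z=a|_Z$ — is that restriction is \emph{not} functorial (for $\mathu_{HS}$ it truncates a singular-value decomposition, for $\mathu_P$ it discards the points leaving $Z$), so every step must be an estimate rather than an identity. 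The remaining care is purely bookkeeping of the index sets $Y^\perp$, $Y\setminus Z$, $Z^\perp$ and of the accompanying dimension ratios; since each individual estimate is a direct invocation of Lemma~\ref{lem_restr_useful} together with the coproduct scaling, the statement is indeed an immediate corollary.
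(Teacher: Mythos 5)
Your proof is correct and is essentially the paper's own argument: the paper states this lemma without proof as an ``immediate corollary'' of Lemma~\ref{lem_restr_useful}, and your triangle inequality through $a$ and $a|_Y\sqcup 1_{Y^\perp}$, combined with the coproduct scaling identity, is exactly the intended derivation worked out in detail. All three invocations of Lemma~\ref{lem_restr_useful} and the final bookkeeping with $|X|-|Y|,\,|Y|-|Z|\le|X|-|Z|$ are sound.
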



Note that if $Y$ is $a$-invariant then $(a|_Y)|_Z=a|_Z$. 

\begin{lemma}\label{property_restr}
For $Y\subseteq X$ and $a,b\in \Aut(X)$ we have
$$\left|d(a,b)-d(a|_Y,b|_Y)\cdot\left(\frac{|Y|}{|X|}\right)^s\right|\preceq\left(\frac{|X|-|Y|}{|X|}\right)^s.$$
\end{lemma}


\begin{lemma}\label{lem_almost_hom}
    For $Y\subseteq X$ and $a_i\in \Aut(X)$ we have
    $$d\left((a_1\cdot\ldots\cdot a_k)|_Y, a_1|_Y\cdot\ldots\cdot a_k|_Y\right)\preceq k \left(\frac{|X|-|Y|}{|X|}\right)^s.$$
\end{lemma}

These and similar restriction constructions are not new and were used to create so-called ``almost homomorphisms''. Let us explain what this means. 

We consider a group $G$ generated by a finite (not symmetric) set $S$ and the corresponding epimorphism $\pi\colon F_S\to G$. Given a homomorphism $\varphi\colon G\to\Aut(X)$ and $Y\subseteq X$, the restriction $\varphi|_Y\colon F_S\to \Aut(Y)$ is an ``almost homomorphism'' of a group $G$ in the following sense. For any $r\in\ker \pi$ we have
    $$d(\varphi|_Y(r),1_Y)\preceq |r|\left(\frac{|X|-|Y|}{|Y|}\right)^s.$$


Clearly, Lemmas 3-6 in this section also hold for homomorphisms and their restrictions, one needs only to replace $d(\cdot,\cdot)$ by $d_S(\cdot,\cdot)$.

\subsection{Amalgamated free products and HNN extensions}
Given three groups $G_1,G_2,H$ and inclusions $i_1\colon H \to G_1$, $i_2:H\to G_2$, the amalgamated free product of $G_1$ and $G_2$ over $H$ is the group defined by
$G_1*_HG_2:=G_1*G_2/\langle\langle E_0\rangle\rangle$, 
where $E_0=\{i_1(s)i_2(s)^{-1}: s\in S_H\}$ and $S_H$ is a generating set of $H$. 
Note that for any homomorphism $\varphi\colon G_1*G_2\to \Aut(X)\in\mathu$ we have 
$$d_{E_0}(\varphi, 1_X) = d_{S_H}(i_1^*(\varphi),i_2^*(\varphi)).$$ 

The special case of this construction is the double of a group $G$ over a subgroup $H$. In this case $G_1$ and $G_2$ are two copies of $G$, $i_1$ and $i_2$ are given by the same inclusion $i\colon H\to G$.

For a homomorphism $\varphi\colon G\to \Aut(X)$ and an element $t\in \Aut(X)$ we will denote by $\varphi^t\colon G\to \Aut(X)$ the homomorphism given by $\varphi^t(g):=t\varphi(g)t^{-1}$ for $g\in G$. 

For a group $G$ and two inclusions $i_1,i_2\colon H\to G$ the HNN extension of $G$ relative to $i_1,i_2$ is the group defined by 
$G*_{i_1,i_2}:= G*\mathbb{Z}/\langle\langle E_0\rangle\rangle$, 
where $\mathbb{Z}=\langle t\rangle$, $E_0=\{i_1(s)^{-1}i_2(s)^{t}: s\in S_H\}$ and $S_H$ is a generating set of $H$. Note that for any homomorphism $\varphi\colon G_1*\mathbb{Z}\to \Aut(X)\in\mathu$ we have 
$$d_{E_0}(\varphi, 1_X) = d_{S_H}(i_1^*(\varphi),i_2^*(\varphi)^{\varphi(t)}).$$

For a finite group $H$ we will always take $S_H=H$. If $G_j$ is generated by a set $S_j$ for $j=1,2$, we will take a generating set $S_1\sqcup S_2$ of $G_1*G_2$ and $G_1*_HG_2$. 

Both of these constructions can be seen as a quotient of a free product. The following proposition will be very useful.  

\begin{prop}\label{st_and_st_ep_give_st}
    If groups $H_1$ and $H_2$ are (flexibly) $\mathu$-stable, then $H_1*H_2$ is (flexibly) $\mathu$-stable.

    If a group $\overline{G}$ and an epimorphism $\pi\colon\overline{G}\to G$ are (flexibly) $\mathu$-stable, then $G$ is (flexibly) $\mathu$-stable. In other words, a composition of (flexibly) $\mathu$-stable epimorphisms is (flexibly) $\mathu$-stable. 
\end{prop}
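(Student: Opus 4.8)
The plan is to prove the two assertions of Proposition~\ref{st_and_st_ep_give_st} separately, working throughout with the epimorphism reformulation of (flexible) stability, since that is the form in which the later sections need it.

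\textbf{Free products.} First I would handle the claim that $H_1 * H_2$ is (flexibly) $\mathu$-stable. By the propositions relating stability of a group to stability of the canonical epimorphism from a free group, it suffices to show that $\pi\colon F_{S_1 \sqcup S_2} \to H_1 * H_2$ is (flexibly) $\mathu$-stable, where $S_j$ generates $H_j$. The key observation is that $\ker\pi$ is generated (as a normal subgroup) by $\ker\pi_1 \cup \ker\pi_2$, where $\pi_j \colon F_{S_j} \to H_j$ are the individual epimorphisms. Given $\varepsilon > 0$, stability of $\pi_1$ and $\pi_2$ supplies $\delta_j > 0$ and finite sets $E_j \subseteq \ker\pi_j$. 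I would set $E = E_1 \cup E_2$ and $\delta = \min(\delta_1, \delta_2)$. For any $\varphi \colon F_{S_1 \sqcup S_2} \to \Aut(X)$ with $d_E(\varphi, 1_X) \le \delta$, the restrictions $\varphi|_{F_{S_j}}$ satisfy $d_{E_j}(\varphi, 1_X) \le \delta_j$, so stability of each $\pi_j$ yields homomorphisms $\psi_j \colon H_j \to \Aut(X)$ (in the flexible case, $\psi_j \colon H_j \to \Aut(Y_j)$ with $Y_j \supseteq X$) that are $\varepsilon$-close to $\varphi$ on $S_j$. The universal property of the free product then assembles $\psi_1, \psi_2$ into a single $\psi \colon H_1 * H_2 \to \Aut(X)$ with $d_{S_1 \sqcup S_2}(\varphi, \psi) \le \varepsilon$. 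In the flexible case the two enlargements $Y_1, Y_2$ have to be reconciled onto a common codomain; I would embed both into $Y_1 \oplus_X Y_2$ (the pushout over the shared $X$), with $|Y_1 \oplus_X Y_2|/|X| \to 1$ following from $|Y_j|/|X| \to 1$, and verify via Lemma~\ref{lem_double_restr} and Lemma~\ref{property_restr} that restricting back to $X$ costs only a controlled error.

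\textbf{Composition of epimorphisms.} For the second assertion, suppose $\overline{G}$ is (flexibly) $\mathu$-stable and $\pi \colon \overline{G} \to G$ is a (flexibly) $\mathu$-stable epimorphism; I want to conclude $G$ is (flexibly) $\mathu$-stable. Fix a finite generating set $S$ of $\overline{G}$, so $\pi(S)$ generates $G$. Given $\varepsilon > 0$, I would first invoke stability of the epimorphism $\pi$ to obtain $\varepsilon_1 > 0$ (to be shrunk below), producing $\delta_1$ and $E_1 \subseteq \ker\pi$. The strategy is: take an asymptotic homomorphism of $G$, pull it back to an asymptotic homomorphism of $\overline{G}$, use stability of $\overline{G}$ to correct it to a genuine homomorphism $\overline{\psi} \colon \overline{G} \to \Aut(X)$, and then use stability of $\pi$ to show that $\overline{\psi}$ is close to one that factors through $G$. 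The quantitative bookkeeping is the heart of the matter: I must choose the parameters so that the $E_1$-smallness hypothesis needed to apply $\pi$-stability is actually guaranteed by the closeness coming from $\overline{G}$-stability, which in turn forces $\overline{\psi}$ to be nearly trivial on $\ker\pi$ and hence $\varepsilon$-close to a homomorphism of $G$.

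\textbf{Main obstacle.} I expect the genuinely delicate point to be the flexible version of the composition statement, and specifically the propagation of the multiplicative codomain bound. Each application of flexible stability enlarges the space by a factor approaching $1$, but composing the two enlargements multiplies these factors, so I must confirm that $(1 + \varepsilon_1)(1 + \varepsilon_2) \le 1 + \varepsilon$ can be arranged by choosing $\varepsilon_1, \varepsilon_2$ small; this is routine but must be stated. More subtle is that after enlarging once, the ``almost homomorphism'' $\overline{\psi}|_X$ lives on a larger space, and applying $\pi$-stability requires its restriction to the still-larger target to remain $E_1$-small in the bi-invariant metric. Controlling this requires Lemma~\ref{lem_restr_useful}, Lemma~\ref{lem_double_restr} and Lemma~\ref{property_restr} to bound the distortion introduced by the intermediate restriction, together with Lemma~\ref{lem_almost_hom} to certify that the pulled-back map is close to trivial on the finite set $E_1 \subseteq \ker\pi$. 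Once these error terms are seen to vanish as the enlargement ratios tend to $1$, the two stability inputs chain together cleanly.
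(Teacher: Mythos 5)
Your overall architecture matches the paper's: the free-product half is assembled from the universal property after padding the two codomains (your pushout $Y_1\oplus_X Y_2$ is precisely the paper's ``add a multiple of the trivial homomorphism to get the same codomains''), and the composition half chains the two stability hypotheses using Lemma~\ref{lem_double_restr} and Lemma~\ref{property_restr}, with the same ordering of parameter choices (accuracy for $\pi$ fixed first, accuracy for $\overline{G}$-stability shrunk afterwards) and the same multiplicative codomain bookkeeping that the paper encodes in the term $(\varepsilon_1+\varepsilon_2+\varepsilon_1\varepsilon_2)^s$. However, there is one concrete gap in the composition argument as you phrase it. You say you work in the epimorphism reformulation, whose entire deliverable is a pair $(\delta,E)$ with $E$ a finite subset of the kernel of the composite epimorphism; you never specify $E$, and your justification of the key step --- that ``the closeness coming from $\overline{G}$-stability \ldots forces $\overline{\psi}$ to be nearly trivial on $\ker\pi$'' --- is false in that formulation. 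Closeness to $\varphi$ on the generating set $S$ controls $\overline{\psi}$ on $E_1\subseteq\ker\pi$ only if $\varphi$ itself is assumed small on chosen lifts of $E_1$, and that assumption must be built into $E$. Concretely, take $\overline{G}=\mathbb{Z}=F_{\{x\}}$ and $\pi\colon\mathbb{Z}\to\mathbb{Z}/2\mathbb{Z}$ (a stable epimorphism onto a stable group): since $F_{\{x\}}\to\overline{G}$ has trivial kernel, $\overline{G}$-stability returns $\overline{\psi}=\varphi$ for every homomorphism $\varphi$, yet $\varphi(x)^2$ need not be anywhere near $1_X$; without putting a lift of $x^2$ into $E$, no homomorphism of $\mathbb{Z}/2\mathbb{Z}$ close to $\varphi$ need exist.

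The paper's proof supplies exactly the missing device: it fixes a set-theoretic section $p\colon G_2\to G_1$ of the first epimorphism, takes $E=E_1\cup p(E_2)$, and the resulting estimate for $d_{E_2}(\psi_1,1)$ then carries the word-length factor $\max\{|e|:e\in p(E_2)\}$ --- which is the real reason the inner accuracy $\varepsilon_1$ must be chosen after $(\delta_2,E_2)$, the ordering you correctly anticipated. Your alternative reading --- pull back an asymptotic homomorphism of $G$, so that near-triviality on $\ker\pi$ comes for free because $\varphi_n\circ\pi$ sends all of $\ker\pi$ to $\varphi_n(1_G)$, and then use the pointwise closeness in the sequence definition --- does repair the proof of the main assertion that $G$ is (flexibly) stable, at the cost of passing through the equivalence propositions. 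But it does not prove the paper's ``in other words'' statement that a composition of stable epimorphisms $G_1\to G_2\to G_3$ is stable, because for a general stable epimorphism the source $G_1$ is not free and there is no asymptotic homomorphism of $G_3$ to pull back; in that generality the lift $p(E_2)$ is unavoidable.
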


\begin{proof}
    The first part for stability is clear. For flexible stability one can add a multiple of the trivial homomorphism to homomorphisms from $H_1$ and $H_2$ to get the same codomains. 

    The proof of the second part for stability is similar but simpler than for flexible stability. For this reason we will give the proof only for flexible stability. Let us fix a group $G_1$ generated by a finite set $S$ and two stable epimorphisms $\pi_1\colon G_1\to G_2$ and $\pi_2\colon G_2\to G_3$. Also take $\delta_j(\varepsilon_j)$, $E_j(\varepsilon_j)$, $Y_j(\varphi_j)$ and $\psi_j(\varphi_j)$ from Definition~\ref{def:stable_epi} for $\pi_j$ ($j=1,2$). 

    Let us fix some map $p\colon G_2\to G_1$ such that $\pi_1\circ p=Id_{G_2}$ and take $\delta =\delta_1$, $E=E_1\cup p(E_2)$.
    For a homomorphism $\varphi\colon G_1\to \Aut(X_1)$ satisfying $d_E(\varphi, 1)\le \delta_1$ there exist $X_2=Y_1(\varphi)\supseteq X_1$ and $\psi_1=\psi_1(\varphi)\colon G_2\to \Aut(X_2)$ such that $d_S(\varphi,\psi_1|_{X_1})\le \varepsilon_1$ and $|X_2|\le(1+\varepsilon_1)|X_1|$. Hence, by Lemma~\ref{property_restr} and Lemma~\ref{lem_almost_hom}
    \begin{equation}\label{eq1_fromprop}
    d_{E_2}(\psi_1,1_{X_2})\preceq \delta_1+\varepsilon_1\cdot \max\{|e| : e\in p(E_2)\}+\varepsilon_1^s.
    \end{equation}
    Since the right-hand side can be arbitrary small, we can apply stability of $\pi_2$ to $\psi_1$. This gives us $X_3=Y_2(\psi_1)\supseteq X_2$ and $\psi_2=\psi_2(\psi_1)\colon G_3\to \Aut(X_3)$ such that $d_S(\psi_1,\psi_2|_{X_2})\le \varepsilon_2$ and $|X_3|\le(1+\varepsilon_2)|X_2|$. 
    By Lemma~\ref{lem_double_restr} and Lemma~\ref{property_restr} we have 
    \begin{equation}\label{eq2_fromprop}
    d_S(\varphi,\psi_2|_{X_1})\preceq \varepsilon_1+\varepsilon_2\left(\frac{|X_2|}{|X_1|}\right)^s+\left(\frac{|X_3|-|X_1|}{|X_1|}\right)^s\le \varepsilon_1+\varepsilon_2 (1+\varepsilon_1)^s+(\varepsilon_1+\varepsilon_2+\varepsilon_1\varepsilon_2)^s.
    \end{equation}
    We see again that the right-hand side can be arbitrary small. This means that for each $\varepsilon>0$ we can fix small enough $\varepsilon_2>0$ using (\ref{eq2_fromprop}). This gives us $\delta_2$ and $E_2$. Using $\delta_2$, $E_2$ and (\ref{eq1_fromprop}) (and also (\ref{eq2_fromprop})) we take small enough $\varepsilon_1$. It gives us $\delta_1$ and $E_1$. We see that the stability condition for $\pi_2\circ\pi_1$ and $\varepsilon$ is satisfied for $\delta=\delta_1$ and $E=E_1\cup p(E_2)$.       
    
\end{proof}

In the proofs of stability we will always take the set $E_0$ as a set $E$ from the definition of a stable epimorphism.

For $\varphi\colon G_1*G_2\to \Aut(X)$ we will denote by $\varphi_1$ and $\varphi_2$ the restrictions of $\varphi$ on $G_1$ and $G_2$ correspondingly. For $\varphi\colon G*\mathbb{Z}\to\Aut(X)$ we will denote the restriction of $\varphi$ on $G$ by $\varphi_G$.

\section{Operator norm stability}
In \cite{eilers2020c} it was proven using results from \cite{blackadar1985shape}  that $C^*$-stability of groups is preserved under taking amalgamated free products and HNN extensions over finite groups.
Operator norm stability is weaker than $C^*$-stability (in fact, $C^*$-stability implies operator norm stability, see \cite{eilers2020c}). In this section we will show that the similar result holds for the operator norm stability as well. This gives more examples of finitely generated groups that are stable with respect to the operator norm.

Surprisingly, operator norm stability behaves much better in this setting than $HS$-stability and $P$-stability. This happens because of the following proposition that is probably well-known to experts. 

\begin{prop}\label{matr_prop}
Let $H$ be a finite group. Then for any $\varphi_1,\varphi_2\colon H\to \Aut(X)$ -- two finite-dimensional unitary representations of $H$ with $d_H(\varphi_1,\varphi_2)\le\delta<1$ there exists a unitary operator $u\in\Aut(X)$ such that
$$||u-1_X||_{op}\le 2\delta,\quad \varphi_2^u=\varphi_1.$$
\end{prop}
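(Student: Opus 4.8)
The plan is to build the conjugating unitary $u$ by averaging and then extracting the unitary part of a polar decomposition. First I would form the averaged operator
$$T = \frac{1}{|H|}\sum_{h\in H}\varphi_1(h)\varphi_2(h)^*,$$
which is the standard device for producing an intertwiner between two unitary representations. A direct reindexing of the sum (substituting $h\mapsto g^{-1}h$ and using $\varphi_2(g^{-1}h)^* = \varphi_2(h)^*\varphi_2(g)$) shows that $\varphi_1(g)T = T\varphi_2(g)$ for every $g\in H$, so $T$ intertwines $\varphi_2$ with $\varphi_1$.

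Next I would estimate how far $T$ is from the identity. Because each $\varphi_2(h)$ is unitary, right multiplication by $\varphi_2(h)^*$ preserves the operator norm, so
$$\|\varphi_1(h)\varphi_2(h)^* - 1_X\|_{op} = \|\varphi_1(h) - \varphi_2(h)\|_{op} \le \delta,$$
and the triangle inequality gives $\|T - 1_X\|_{op} \le \delta < 1$. In particular $T$ is invertible, and all of its singular values lie in $[1-\delta,\,1+\delta]$.

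Now take the polar decomposition $T = u|T|$ with $|T| = (T^*T)^{1/2}$; invertibility of $T$ guarantees that $u$ is unitary. The crucial step is to check that the intertwining property descends from $T$ to its unitary part $u$. Taking adjoints in $\varphi_1(g)T = T\varphi_2(g)$ and replacing $g$ by $g^{-1}$ yields $T^*\varphi_1(g) = \varphi_2(g)T^*$, and combining the two relations gives $T^*T\,\varphi_2(g) = T^*\varphi_1(g)T = \varphi_2(g)\,T^*T$. Hence $T^*T$, and therefore by the functional calculus its positive square root $|T|$, commutes with every $\varphi_2(g)$. Consequently $u = T|T|^{-1}$ satisfies $\varphi_1(g)u = T\varphi_2(g)|T|^{-1} = u\varphi_2(g)$, that is $\varphi_2^u = \varphi_1$.

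Finally, for the norm bound I would use the spectral estimate on $|T|$: since the eigenvalues of $|T|$ are exactly the singular values of $T$, they lie in $[1-\delta,1+\delta]$, so $\||T| - 1_X\|_{op}\le\delta$, and therefore $\|T - u\|_{op} = \|u(|T|-1_X)\|_{op} = \||T|-1_X\|_{op} \le \delta$. Combined with $\|T - 1_X\|_{op}\le\delta$, the triangle inequality yields $\|u - 1_X\|_{op}\le 2\delta$, completing the proof. I expect the only genuinely delicate point to be the verification that $u$ inherits the intertwining relation, which hinges on $T^*T$ commuting with $\varphi_2$ and the continuity of the square-root functional calculus; the two norm estimates are routine once $\|T-1_X\|_{op}\le\delta$ is in hand.
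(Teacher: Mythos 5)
Your proof is correct and takes essentially the same approach as the paper: average $\varphi_1(h)\varphi_2(h)^{-1}$ over $H$ to obtain an intertwiner that is $\delta$-close to the identity, take its polar decomposition, show that the positive part commutes with $\varphi_2$, and conclude that the unitary part conjugates $\varphi_2$ to $\varphi_1$ and is $2\delta$-close to $1_X$. The only difference is presentational: you verify the two auxiliary facts inline (commutation of $|T|$ with $\varphi_2$ via $T^*T$ and the functional calculus, and the bound $\|u-1_X\|_{op}\le 2\delta$ via the singular values of $T$), whereas the paper isolates them as Lemma~\ref{posdef_lemma} and the quoted Lemma~\ref{fromUlam}.
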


For the completeness of the exposition we will provide the proof. We will first need the following two lemmas. 
\begin{lemma}\label{posdef_lemma}
For a positive semi-definite operator $b\in B(X)$ and $u\in \Aut(X)$ if 
$u^b\in \Aut(X)$ then $u^b=u$.
\end{lemma}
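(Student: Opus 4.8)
The plan is to unwind the definition $u^{b}=bub^{-1}$ and show that the only way conjugating the unitary $u$ by the positive operator $b$ can again produce a unitary is for $b$ to commute with $u$ outright, which immediately forces $u^{b}=u$. First I would note that for the symbol $u^{b}$ to denote a genuine element of $\Aut(X)$ the operator $b$ must be invertible, so that effectively $b$ is positive definite; in particular $b=b^{*}$ and $b^{-1}=(b^{-1})^{*}$.

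The computational heart is to impose unitarity of $v:=u^{b}=bub^{-1}$. Writing $v^{*}=b^{-1}u^{*}b$ and using $u^{*}u=1_{X}$, the relation $v^{*}v=1_{X}$ becomes
$$b^{-1}u^{*}b^{2}ub^{-1}=1_{X},$$
and multiplying this on the left and on the right by $b$ rearranges it to $u^{*}b^{2}u=b^{2}$, i.e. $b^{2}u=ub^{2}$. Thus $b^{2}$ commutes with $u$.

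To finish I would pass from $b^{2}$ to $b$. Since $b$ is positive, it is the unique positive square root of $b^{2}$, and by the spectral theorem $b=(b^{2})^{1/2}$ is a norm-limit of polynomials in $b^{2}$. Hence any operator commuting with $b^{2}$ also commutes with $b$, so $bu=ub$ and therefore $u^{b}=bub^{-1}=ubb^{-1}=u$, as claimed.

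The only genuinely non-formal step is this last passage from commuting with $b^{2}$ to commuting with $b$; every other manipulation is a direct algebraic consequence of the unitarity relation. I expect that step to be the main (though mild) obstacle, and I would handle it cleanly via uniqueness of the positive square root and functional calculus rather than by any quantitative estimate.
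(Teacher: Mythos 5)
Your proof is correct and follows essentially the same route as the paper's: unitarity of $u^b$ yields $b^{-1}u^*b^2ub^{-1}=1_X$, hence $b^2u=ub^2$, and positivity of $b$ then upgrades this commutation to $bu=ub$, giving $u^b=u$. The only (immaterial) difference is in that last step, where the paper argues via $u$-invariance of the eigenspaces of $b^2$, which coincide with those of $b$ since $b$ is positive semi-definite, while you invoke uniqueness of the positive square root and functional calculus --- two standard justifications of the same fact.
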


\begin{proof}
We have:
$$1_X = (u^b)^*\cdot (u^b)=b^{-1}u^{-1}b^{2}ub^{-1} \quad \Longrightarrow \quad b^2u=u b^2.$$
So for any eigenvalue $\lambda$ of $b^2$ the corresponding eigenspace $X_\lambda$ is $u$-invariant. Since $b$ is positive semi-definite, $b$ and $b^2$ have the same eigenspaces. And we deduce that $bu=ub$. 
\end{proof}

\begin{lemma}[Lemma 4.5, \cite{burger2010ulam}]\label{fromUlam}
    Let $a\in B(X)$ with $||a-1_X||_{op}\le\delta<1$ and let $a=u\cdot b$ be the polar decomposition $(u\in\Aut(X)$, $b$ -- positive semi-definite$)$. Then $||u-1_X||_{op}\le 2 \delta$. 
\end{lemma}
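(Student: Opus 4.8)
\section*{Proof proposal}

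The plan is to prove Proposition~\ref{matr_prop} by producing an explicit averaging operator that intertwines the two representations and then correcting it to a unitary via polar decomposition, using Lemma~\ref{fromUlam} to control the operator norm of the unitary part and Lemma~\ref{posdef_lemma} to verify that this unitary genuinely conjugates one representation into the other.

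First I would form the averaged intertwiner
\begin{equation*}
    a := \frac{1}{|H|}\sum_{h\in H}\varphi_1(h)\,\varphi_2(h)^{-1}.
\end{equation*}
By the standard averaging computation, $a$ satisfies the exact intertwining relation $\varphi_1(g)\,a = a\,\varphi_2(g)$ for every $g\in H$: one reindexes the sum $\varphi_1(g)\,a\,\varphi_2(g)^{-1}$ by $h\mapsto g^{-1}h g$ (using $\varphi_i(g)\varphi_i(h)=\varphi_i(gh)$ and unitarity) to recover $a$. Next I would estimate how far $a$ is from the identity. Since each summand $\varphi_1(h)\varphi_2(h)^{-1}$ is within $\delta$ of $1_X$ in operator norm (because $\|\varphi_1(h)-\varphi_2(h)\|_{op}\le\delta$ and left-multiplication by the unitary $\varphi_2(h)^{-1}$ is an operator-norm isometry), the triangle inequality gives $\|a-1_X\|_{op}\le\delta<1$.

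Now I would take the polar decomposition $a=u\cdot b$ with $u\in\Aut(X)$ unitary and $b$ positive semi-definite. Applying Lemma~\ref{fromUlam} with the bound $\|a-1_X\|_{op}\le\delta$ yields immediately
\begin{equation*}
    \|u-1_X\|_{op}\le 2\delta,
\end{equation*}
which is the norm estimate claimed. It remains to show $\varphi_2^{u}=\varphi_1$, i.e.\ that the unitary part $u$ (not just $a$) conjugates $\varphi_2$ into $\varphi_1$. The intertwining relation $a\,\varphi_2(g)=\varphi_1(g)\,a$ together with unitarity of $\varphi_1(g),\varphi_2(g)$ shows that $a^*a$ commutes with $\varphi_2(g)$ for every $g$, hence so does the positive square root $b=(a^*a)^{1/2}$. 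This is the step I expect to carry the real content, and it is exactly where Lemma~\ref{posdef_lemma} enters: since $b$ commutes with $\varphi_2(g)$, the conjugate $\varphi_2(g)^{b}$ equals $\varphi_2(g)$, and feeding the relation $a\varphi_2(g)=\varphi_1(g)a$ through the decomposition $a=ub$ gives $u\,\varphi_2(g)\,u^{-1}=\varphi_1(g)$.

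The main obstacle is the last point: the averaging construction only guarantees that the (possibly non-unitary) operator $a$ intertwines, so one must transfer the intertwining from $a$ to its unitary part $u$. Lemma~\ref{posdef_lemma} is tailored precisely for this — it shows that the positive-definite factor $b$ acts trivially under conjugation on elements it commutes with — so the commutation $b\,\varphi_2(g)=\varphi_2(g)\,b$ is the crux I would verify carefully before concluding. Once that commutation is established the rest is a routine manipulation of the polar decomposition.
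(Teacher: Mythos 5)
You were asked to prove Lemma~\ref{fromUlam} itself, i.e.\ the estimate $\|u-1_X\|_{op}\le 2\delta$ for the unitary factor of the polar decomposition of an operator $\delta$-close to the identity. Your proposal never proves it: it is a proof of Proposition~\ref{matr_prop}, and at the one point where the content of the lemma is needed you write ``Applying Lemma~\ref{fromUlam} \dots\ yields immediately $\|u-1_X\|_{op}\le 2\delta$'' --- that is, you invoke the assigned statement as a black box. Relative to the task this is circular: nothing in your text bounds $\|u-1_X\|_{op}$ in terms of $\|a-1_X\|_{op}$. (The paper itself also gives no internal proof; it imports the lemma from Lemma~4.5 of \cite{burger2010ulam}, so there is no argument of the paper for your text to reproduce --- but a blind proof attempt still has to supply one.)

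The missing argument is short and worth knowing. From $\|a-1_X\|_{op}\le\delta<1$ one gets, for every unit vector $\xi$, the two-sided bound $(1-\delta)\le\|a\xi\|\le 1+\delta$; hence $a$ is invertible and the spectrum of the positive factor $b=(a^*a)^{1/2}$ is contained in $[1-\delta,1+\delta]$, so by the spectral theorem $\|b-1_X\|_{op}\le\delta$. Since $u$ is unitary, $\|u-a\|_{op}=\|u(1_X-b)\|_{op}=\|1_X-b\|_{op}\le\delta$, and the triangle inequality gives $\|u-1_X\|_{op}\le\|u-a\|_{op}+\|a-1_X\|_{op}\le 2\delta$. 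As a side remark on the portion you did write: the intertwining relation $\varphi_1(g)a=a\varphi_2(g)$ follows from the reindexing $h\mapsto gh$ (left translation), not from $h\mapsto g^{-1}hg$ as you claim --- with the conjugation substitution the summand $\varphi_1(g)\varphi_1(h)\varphi_2(h)^{-1}\varphi_2(g)^{-1}$ does not recombine into the averaged sum. But this is incidental; the essential gap is that the lemma you were assigned was left unproved.
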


We are ready to prove Proposition~\ref{matr_prop}.

\begin{proof}
We can consider the operator 
$$a = \frac{1}{|H|}\sum_{h\in H}\varphi_1(h)\varphi_2(h)^{-1}.$$
Let us note that 
$$||a-1_X||_{op}\le \frac{1}{|H|}\sum_{h\in H}||\varphi_1(h)\varphi_2(h)^{-1}-1_X||_{op}=\frac{1}{|H|}\sum_{h\in H}||\varphi_1(h)-\varphi_2(h)||_{op}\le d_H(\varphi_1,\varphi_2)\le \delta $$
and
$$\forall h\in H \quad   \varphi_1(h)a\varphi_2(h)^{-1}=a\quad  \Longleftrightarrow \quad \varphi_1 =\varphi_2^a. $$
Let us consider the polar decomposition $a=ub$ of $a$. 
By Lemma~\ref{fromUlam} we have 
$$||u-1_X||_{op}\le 2\delta.$$ 
We can apply Lemma~\ref{posdef_lemma} to $b$ and $\varphi_2(h)$ since $\varphi_2(h)^b=\varphi_1(h)^{u^{-1}}\in\Aut(X)$. 
We get $\varphi_2^u=\varphi_1$, this finishes the proof. 
\end{proof}

\begin{theorem}\label{operator_amalgam}
Let $H$ be a finite group, let $G_1$ and $G_2$ be operator norm stable groups and let ${i_j:H\to G_j}$ for $j=1,2$ be some inclusions. Then $G_1*_H G_2$ is operator norm stable.  
\end{theorem}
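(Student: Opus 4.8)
The plan is to prove operator norm stability of $G_1 *_H G_2$ by exploiting Proposition~\ref{matr_prop}, which is the key technical input: close representations of the finite group $H$ are conjugate by an operator close to the identity. Suppose $\varphi\colon G_1 * G_2 \to \Aut(X)$ satisfies $d_{E_0}(\varphi, 1_X) \le \delta$, where $E_0 = \{i_1(s)i_2(s)^{-1} : s \in H\}$. As noted in the excerpt, this is equivalent to $d_H(i_1^*\varphi, i_2^*\varphi) \le \delta$, i.e. the two restricted representations $\varphi_1 \circ i_1$ and $\varphi_2 \circ i_2$ of $H$ are $\delta$-close in operator norm.

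First I would apply Proposition~\ref{matr_prop} to these two representations of $H$: there exists a unitary $u \in \Aut(X)$ with $\|u - 1_X\|_{op} \le 2\delta$ and such that conjugating one matches it to the other on $H$. Concretely, I would replace $\varphi_2$ by the conjugate $\varphi_2^{u}$, which is still a homomorphism of $G_2$ (with the same isomorphism type, so still $\delta$-close to $\varphi$ on the generators $S_2$, up to an error controlled by $\|u - 1_X\|_{op} \le 2\delta$), and which now agrees \emph{exactly} with $\varphi_1$ on the amalgamated subgroup: $\varphi_1 \circ i_1 = \varphi_2^{u} \circ i_2$. At this point the two homomorphisms $\varphi_1\colon G_1 \to \Aut(X)$ and $\varphi_2^{u}\colon G_2 \to \Aut(X)$ agree on $H$, so by the universal property of the amalgamated free product they assemble into a genuine homomorphism $\Phi\colon G_1 *_H G_2 \to \Aut(X)$.

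Next I would use operator norm stability of $G_1$ and $G_2$ separately. Since $\varphi_1$ is an honest homomorphism of $G_1$ (being the restriction of $\varphi$ to $G_1$, and $\varphi$ is already a homomorphism of the \emph{free} product $G_1 * G_2$, so the restrictions are automatically homomorphisms), stability is not needed to correct $\varphi_1$; the issue is rather to absorb the conjugation into a single homomorphism of the amalgam. The cleaner route is to phrase everything through the stable epimorphism formulation: I would take $E = E_0$ as the finite subset of $\ker(\pi)$ where $\pi\colon G_1 * G_2 \to G_1 *_H G_2$, and show that the homomorphism $\Phi$ assembled above satisfies $d_S(\varphi, \Phi) \le \varepsilon$ on the generating set $S = S_1 \sqcup S_2$. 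On $S_1$ this distance is $0$ (since $\Phi|_{G_1} = \varphi_1$), and on $S_2$ it is bounded by $d_S(\varphi_2, \varphi_2^{u}) \le 2\|u - 1_X\|_{op} \le 4\delta$ by bi-invariance and the triangle inequality for conjugation. Choosing $\delta = \varepsilon/4$ then gives the required estimate, and by Proposition~\ref{st_and_st_ep_give_st} together with the proposition characterizing stable epimorphisms, this yields operator norm stability of $G_1 *_H G_2$.

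The main subtlety I expect is bookkeeping the fact that an almost-homomorphism $\varphi$ of the free product $G_1 * G_2$ restricts to \emph{honest} homomorphisms $\varphi_1, \varphi_2$ of the factors (since the defining relations of $G_1 * G_2$ impose no new constraints beyond those of $G_1$ and $G_2$, which $\varphi$ already respects exactly when it is a homomorphism of the free product). Thus, unlike in the $HS$ and $P$ settings, the only defect to repair lies on the amalgamation relations $E_0$, and Proposition~\ref{matr_prop} repairs it \emph{exactly} via a single small conjugation rather than merely approximately. The genuinely delicate point is therefore verifying that conjugating $\varphi_2$ by $u$ keeps it close to $\varphi_2$ in operator norm uniformly over the generating set, which follows from $\|u\varphi_2(s)u^{-1} - \varphi_2(s)\|_{op} \le 2\|u - 1_X\|_{op}$, and then confirming that no further correction is needed because $\varphi_1$ and $\varphi_2^{u}$ already glue along $H$ into a bona fide homomorphism of the amalgam.
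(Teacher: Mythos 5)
Your proposal is correct and follows essentially the same route as the paper: apply Proposition~\ref{matr_prop} to the two restrictions of $\varphi$ to $H$, keep $\varphi_1$ unchanged, replace $\varphi_2$ by $\varphi_2^u$, observe the two now glue to a homomorphism of $G_1*_H G_2$, and bound $d_{S_1\sqcup S_2}(\varphi,\psi)\le 2\|u-1_X\|_{op}\le 4\delta$. The paper's proof is exactly this, phrased as stability of the epimorphism $G_1*G_2\to G_1*_HG_2$ and then combined with Proposition~\ref{st_and_st_ep_give_st}, just as you indicate.
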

\begin{proof}
 
For any $\varphi\colon G_1*G_2\to\Aut(X)$ with $d_H(i_1^*(\varphi),i_2^*(\varphi))\le \delta < 1$ by Proposition~\ref{matr_prop} there exists $u\in \Aut(X)$ such that $i_1^*(\varphi)=i_2^*(\varphi)^u$ and $||u-1_X||_{op}\le 2\delta$. 
We can construct the representation $\psi\colon G_1* G_2\to \Aut(X)$ by
$$\psi_{G_1}:=\varphi_{G_1}, \quad 
\psi_{G_2}=\varphi_{G_2}^u.$$
Note that $i_1^*(\psi)=i_2^*(\psi)$ and 
$$ d_{S_1\sqcup S_2}(\varphi, \psi)\le 2 d(u, 1_X)\le 4\delta.$$
Since $4\delta$ can be arbitrary small, we finished the proof.
\end{proof}

\begin{theorem}\label{operator_HNN}
Let $H$ be a finite group, let $G$ be an operator norm stable group, let $i_1,i_2\colon H \to G$ be two inclusions. Then $G*_{i_1,i_2}$ is operator norm stable. 
\end{theorem}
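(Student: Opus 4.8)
The plan is to mimic the proof of Theorem~\ref{operator_amalgam}, but now the two inclusions $i_1,i_2\colon H\to G$ land in the same group $G$, and the amalgamating relation is twisted by the stable letter $t$. Concretely, fix $\varphi\colon G*\mathbb{Z}\to\Aut(X)$ with $d_{E_0}(\varphi,1_X)\le\delta<1$; recall from the HNN description in Section 2.4 that this is exactly the condition $d_{S_H}\bigl(i_1^*(\varphi),\,i_2^*(\varphi)^{\varphi(t)}\bigr)\le\delta$, since $S_H=H$ for a finite group. So $\varphi(t)$ already almost conjugates the representation $i_2^*(\varphi)$ of $H$ to $i_1^*(\varphi)$.

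The first step is to apply Proposition~\ref{matr_prop} to the two representations $i_1^*(\varphi)$ and $i_2^*(\varphi)^{\varphi(t)}$ of the finite group $H$, which are within $\delta$ of each other. This yields a unitary $w\in\Aut(X)$ with $\|w-1_X\|_{op}\le 2\delta$ and $i_1^*(\varphi)=\bigl(i_2^*(\varphi)^{\varphi(t)}\bigr)^{w}=i_2^*(\varphi)^{w\varphi(t)}$. Setting $t':=w\varphi(t)$, we have produced a genuine conjugator: $i_1^*(\varphi)=i_2^*(\varphi)^{t'}$, that is, $t'\,i_2(h)\,t'^{-1}=i_1(h)$ for every $h\in H$ under $\varphi$.

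The second step is to define the homomorphism $\psi\colon G*\mathbb{Z}\to\Aut(X)$ that is honest on the relations. I would take $\psi_G:=\varphi_G$ (keep the $G$-part unchanged) and set $\psi(t):=t'=w\varphi(t)$. Because $G$ is operator norm stable, one first replaces $\varphi_G$ by a nearby genuine homomorphism of $G$ — actually here the cleaner route is to observe that the free product $G*\mathbb{Z}$ imposes no relation between $G$ and $t$ other than those in $E_0$, so any choice of unitary for $t$ together with any homomorphism on $G$ extends to $G*\mathbb{Z}$; the only condition to verify is that $E_0$ maps to the identity, and by construction $i_1(h)=t'\,i_2(h)\,t'^{-1}$ holds exactly under $\psi$. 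Thus $d_{E_0}(\psi,1_X)=0$ and $\psi$ descends to $G*_{i_1,i_2}$.

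For the estimate, $\psi$ and $\varphi$ agree on all generators of $G$, and on the generator $t$ we have $d(\psi(t),\varphi(t))=\|w\varphi(t)-\varphi(t)\|_{op}=\|w-1_X\|_{op}\le 2\delta$, so $d_{S\sqcup\{t\}}(\varphi,\psi)\le 2\delta$, which can be made arbitrarily small. The main subtlety, and the only place requiring care beyond the amalgam case, is that $\varphi_G$ is merely an almost-homomorphism rather than an honest one; one should first invoke operator norm stability of $G$ to correct $\varphi_G$ to a nearby representation $\varphi_G'$ and reapply Proposition~\ref{matr_prop} to the restrictions of $\varphi_G'$ (which are then exact representations of $H$), ensuring all the small errors accumulate additively and stay controlled. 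I expect this bookkeeping — coordinating the stability correction on $G$ with the conjugation correction on $t$ so that the relation $E_0$ is satisfied exactly while distances remain $O(\delta)$ — to be the only real obstacle; the finite-group rigidity of Proposition~\ref{matr_prop} does all the heavy lifting.
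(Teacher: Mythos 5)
Your proposal is correct and takes essentially the same route as the paper: apply Proposition~\ref{matr_prop} to the representations $i_1^*(\varphi)$ and $i_2^*(\varphi)^{\varphi(t)}$ of $H$, absorb the resulting unitary $u$ into the stable letter via $\psi(t):=u\varphi(t)$, and keep $\psi_G:=\varphi_G$. The ``main subtlety'' you flag at the end is not actually present in the paper's framework: the theorem is proved by showing that the epimorphism $G*\mathbb{Z}\to G*_{i_1,i_2}$ is stable, so $\varphi$ is an honest homomorphism of the free product and $\varphi_G$ an honest representation of $G$; the reduction from almost homomorphisms (which is where stability of $G$ and of $\mathbb{Z}$ genuinely enters) is packaged once and for all in Proposition~\ref{st_and_st_ep_give_st}.
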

\begin{proof}
Given any $\varphi\colon G*\mathbb{Z}\to\Aut(X)$ with $d_H(i_1^*(\varphi),i_2^*(\varphi)^{\varphi(t)})\le \delta < 1$,
we can apply Proposition~\ref{matr_prop} and get $u\in Aut(X)$ such that $d(u,1_X)\le 2\delta$ and $i_1^*(\varphi)=i_2^*(\varphi)^{u\varphi(t)}$.
We will define $\psi\colon G*\mathbb{Z}\to \Aut(X)$ by 
$$\psi_G:=\varphi_G, \quad \psi(t):=u\varphi(t).$$
By construction $d_{S\sqcup\{t\}}(\varphi,\psi) \le d(u,1_X)\le 2 \delta$ and $i_1^*(\psi)=i_2^*(\psi)^{\psi(t)}$. This finishes the proof.
\end{proof}

\begin{exmp}
12 out of 17 wallpaper groups are operator norm stable (see Figure 1 in \cite{eilers2020c}, where operator norm stability is called matricial stability). 11 of them have torsion and can be used to produce non-amenable finitely presented operator norm stable groups (that are not virtually free).     
\end{exmp}

\section{Stability of group doubles}
We now move to proving $\mathu$-stability of amalgamated free products and HNN extensions for $\mathu=\mathu_{HS}$ and $\mathu=\mathu_P$.

\subsection{``The conjugator argument''}

The main ingredient of the proofs is the following fact: if two conjugated actions/representations of a finite group are close, then they are conjugated by an element that is close to identity. 
We have already seen how the similar fact works in the case of operator norm stability. The only difference for $\mathu_P$ and $\mathu_{HS}$ is that we need to assume that homomorphisms are conjugated.   
We will call this fact for simplicity ``the conjugator argument''. 

\begin{prop}\label{prop_conj}
    Let $H$ be a finite group and let $\varphi_1,\varphi_2\colon H\to \Aut(X)\in \mathu$ be two homomorphisms. If $\varphi_1^\#=\varphi_2^\#$, then there exists $t\in\Aut(X)$ such that 
    $$d(t, 1_X)\preceq d_H(\varphi_1,\varphi_2), \quad \varphi_2^t=\varphi_1.$$
\end{prop}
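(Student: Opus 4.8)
The plan is to treat the two cases $\mathu_{HS}$ and $\mathu_P$ by parallel arguments that both rest on the same structural fact: in each category $\Cat$ the class $\varphi^\#$ determines $\varphi$ up to conjugacy and decompositions into indecomposables (irreducibles, resp.\ transitive actions) are unique, so \emph{cancellation} holds. Concretely, if $\varphi_1^\#=\varphi_2^\#$ and $\varphi_1,\varphi_2$ agree on some invariant subobject $Z\subseteq X$, then their restrictions to a complement $Z^\perp$ are again isomorphic, hence conjugate by some $t_1\in\Aut(Z^\perp)$. The whole proof then amounts to producing a \emph{large} invariant $Z$ on which the two homomorphisms already match, setting $t=1_Z\sqcup t_1$, and checking that $t$ moves only a $\preceq\delta$-fraction of $X$, where $\delta:=d_H(\varphi_1,\varphi_2)$.

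For $\mathu_P$ this is clean and combinatorial. I would take $Z=X_0:=\{x\in X:\varphi_1(h)(x)=\varphi_2(h)(x)\ \forall h\in H\}$, the set where the two actions literally coincide. A one-line check ($y=\varphi_1(g)x=\varphi_2(g)x$ satisfies $\varphi_1(h)y=\varphi_1(hg)x=\varphi_2(hg)x=\varphi_2(h)y$) shows $X_0$ is invariant under both actions and that they agree on it, so $t=1_{X_0}\sqcup t_1$ with $t_1$ from cancellation conjugates $\varphi_2$ to $\varphi_1$. The displacement of $t$ is supported on $X\setminus X_0$, and
$$|X\setminus X_0|\le\sum_{h\in H}\#\{x:\varphi_1(h)(x)\ne\varphi_2(h)(x)\}=|X|\sum_{h\in H}d(\varphi_1(h),\varphi_2(h))\le|X|\,|H|\,\delta,$$
giving $d(t,1_X)\le|X\setminus X_0|/|X|\preceq\delta$ as required.

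For $\mathu_{HS}$ the ``agreement subspace'' is too small (a tiny rotation of a representation can fix no vector), so instead I would imitate the averaging operator of Proposition~\ref{matr_prop}. Set $a=\frac{1}{|H|}\sum_{h\in H}\varphi_1(h)\varphi_2(h)^{-1}$; the same computation as before gives $\|a-1_X\|_{HS}\le\delta$, and the substitution $h\mapsto gh$ yields $\varphi_1(g)a=a\varphi_2(g)$, so $a$ intertwines $\varphi_2$ with $\varphi_1$ and $a^*a$ commutes with $\varphi_2$. Using $\|a\|_{op}\le1$ and Lemma~\ref{ineq_HS} one gets $\|a^*a-1_X\|_{HS}\le2\delta$, whence by Lemma~\ref{close_to_a_unitary} the unitary part $w=uv$ of a singular decomposition $a=u\sigma v$ satisfies $\|a-w\|_{HS}\le2\delta$ and therefore $\|w-1_X\|_{HS}\le3\delta$. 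On the support $(\ker a)^\perp$ the operator $t_0:=a(a^*a)^{-1/2}$ is a well-defined unitary intertwiner (since $(a^*a)^{1/2}$ commutes with $\varphi_2$), agreeing with $w$ there. The subspace $\ker a$ is $\varphi_2$-invariant, and I would set $t:=t_0\sqcup t_1$, where $t_1$ is the intertwiner between $\varphi_2|_{\ker a}$ and $\varphi_1|_{(\overline{\operatorname{ran}a})^\perp}$ provided by cancellation; then $\varphi_2^t=\varphi_1$.

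The main obstacle, and the only genuinely delicate point, is showing that this $t$ stays close to $1_X$ in $\|\cdot\|_{HS}$ despite $t_1$ being \emph{arbitrary} on $\ker a$. The key is a dimension count: each zero singular value contributes $1$ to $\|a^*a-1_X\|_{HS}^2=\frac{1}{\dim X}\sum_i(\sigma_i^2-1)^2$, so $\dim\ker a\le 4\delta^2\dim X$. Since $t$ and $w$ differ only on this low-dimensional $\ker a$, I get $\|t-w\|_{HS}\preceq\delta$, and combining with $\|w-1_X\|_{HS}\le3\delta$ yields $d(t,1_X)=\|t-1_X\|_{HS}\preceq\delta$. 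The constants are irrelevant because of the $\preceq$ in the statement, so both cases deliver the desired linear bound $d(t,1_X)\preceq d_H(\varphi_1,\varphi_2)$.
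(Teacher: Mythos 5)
Your proposal is correct, and it splits into two halves that relate differently to the paper. For $\mathu_P$ your argument is essentially identical to the paper's: the same agreement set $X_0$, the same invariance computation, cancellation of the isomorphic parts to produce $t_0$ on the complement, and the same union bound $|X\setminus X_0|\le |H|\,|X|\,d_H(\varphi_1,\varphi_2)$. For $\mathu_{HS}$, however, the paper does not give a proof at all; it cites Theorem 7.3 of Gowers--Hatami (via Corollary 2 of the authors' earlier paper), which is a heavy black box about approximate representations. You instead give a self-contained argument modelled on the paper's own operator-norm Proposition~\ref{matr_prop}: the averaging intertwiner $a=\frac{1}{|H|}\sum_h\varphi_1(h)\varphi_2(h)^{-1}$ with $\|a-1_X\|_{HS}\le\delta$ and $\varphi_1(g)a=a\varphi_2(g)$, the unitary part $w=uv$ of the singular decomposition controlled by Lemma~\ref{close_to_a_unitary}, the observation that the polar isometry $a(a^*a)^{-1/2}$ intertwines on $(\ker a)^\perp$ (using that $a^*a$ commutes with $\varphi_2$), cancellation to fill in an intertwiner $t_1$ on the $\varphi_2$-invariant kernel, and crucially the singular-value count $\dim\ker a\le\|a^*a-1_X\|_{HS}^2\dim X\le 4\delta^2\dim X$, which tames the otherwise arbitrary $t_1$ and yields $\|t-1_X\|_{HS}\preceq\delta$. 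All the estimates check out (e.g.\ $\|t-w\|_{HS}^2\le 4\dim\ker a/\dim X$ since $t-w$ is supported on $\ker a$ and has operator norm at most $2$). What your route buys is a fully elementary, self-contained proof with explicit constants, in the same spirit as the $\mathu_{op}$ case; what the paper's citation buys is brevity and a pointer to a much more general stability theorem. Incidentally, your displacement bound for $\mathu_P$ is stated correctly as $|X\setminus X_0|/|X|$, whereas the paper's final line contains the typo $d(t,1_X)\le|X_0|$.
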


For $\mathu_{HS}$ this proposition follows from Theorem 7.3 in \cite{gowers2017inverse} (see Corollary 2 in \cite{ger2021}).

\begin{proof}[Proof of Proposition~\ref{prop_conj} for $\mathu_P$]
 Let us define a new set $X_0:=\{x\in X:\forall h\in H \quad  \varphi_1(h)x=\varphi_2(h)x\}$. For the sets $X_h= \{x\in X : \varphi_1(h)x=\varphi_2(h)x\}$ it is easy to see that $|X\setminus X_h|=|X|\cdot d(\varphi_1(h),\varphi_2(h))$ and $X\setminus X_0 = \cup_{h\in H} X\setminus X_h$, hence we get $|X\setminus X_0|\le |H|\cdot|X|\cdot d_H(\varphi_1,\varphi_2)$. 
 For any $x\in X_0$ and any $h,h'\in H$ we have
 $$\varphi_1(h')\varphi_1(h)x = \varphi_1(h'h)x=\varphi_2(h'h)x=\varphi_2(h')\varphi_2(h)x=\varphi_2(h')\varphi_1(h)x.$$
 Thus, $X_0$ is $\varphi_1(H)$-invariant, i.e. it is a union of  $\varphi_1(H)$-orbits on which the actions $\varphi_1$ and $\varphi_2$ coincide.   
 Since $\varphi_1|_{X_0}^\#=\varphi_2|_{X_0}^\#$ and $\varphi_1^\#=\varphi_2^\#$, we conclude that $\varphi_1|_{X\setminus X_0}^\#=\varphi_2|_{X\setminus X_0}^\#$. Therefore, there exists $t_0\in \Aut(X\setminus X_0)$ such that $(\varphi_2|_{X\setminus X_0})^{t_0} =\varphi_1|_{X\setminus X_0}$. 
 Now we can take $t=1_{X_0}\sqcup t_0$. 
 By construction we have
 $$d(t,1_X)\le|X_0|\preceq d_H(\varphi_1,\varphi_2) \quad \text{and}\quad \varphi_2^t=\varphi_1.$$
\end{proof}

Using Proposition~\ref{prop_conj} we can already establish stability of some HNN extensions. In this special case the proof will be exactly the same as the proof of Theorem~\ref{operator_HNN}.

\begin{theorem}\label{double_HNN}
    Let $H$ be a finite group, let $G$ be a $\mathu$-stable group and let $i\colon H\to G$ be an injective homomorphism. Then HNN extension $G*_{i,i}$ is $\mathu$-stable. 
\end{theorem}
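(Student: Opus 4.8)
The plan is to show that the defining epimorphism $\pi\colon G*\mathbb{Z}\to G*_{i,i}$ is $\mathu$-stable and then invoke Proposition~\ref{st_and_st_ep_give_st}. Since $\mathbb{Z}$ is $\mathu$-stable and $G$ is $\mathu$-stable by hypothesis, the free product $G*\mathbb{Z}$ is $\mathu$-stable by the first part of that proposition; so once $\pi$ is shown to be a $\mathu$-stable epimorphism, the second part yields $\mathu$-stability of $G*_{i,i}$. As announced, I would take the relator set $E_0=\{i(h)^{-1}i(h)^t:h\in H\}$ as the set $E$ in the definition of a stable epimorphism.

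The crucial observation — and the only place where the hypothesis that the two edge inclusions are \emph{identical} is used — is that the defect $d_{E_0}(\varphi,1_X)$ equals $d_H\big(i^*(\varphi),\,i^*(\varphi)^{\varphi(t)}\big)$, and the two homomorphisms appearing here are conjugate to one another by $\varphi(t)$. Conjugation preserves the isomorphism class, so automatically $i^*(\varphi)^\#=\big(i^*(\varphi)^{\varphi(t)}\big)^\#$. This means that the hypothesis $\varphi_1^\#=\varphi_2^\#$ of the conjugator argument (Proposition~\ref{prop_conj}) is satisfied for free, with no need to modify the representations or actions to match their decompositions into irreducibles — exactly the simplification that is unavailable in the general HNN case where $i_1\neq i_2$.

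Concretely, given $\varphi\colon G*\mathbb{Z}\to\Aut(X)$ with $d_{E_0}(\varphi,1_X)\le\delta$, I would apply Proposition~\ref{prop_conj} to $\varphi_1=i^*(\varphi)$ and $\varphi_2=i^*(\varphi)^{\varphi(t)}$, obtaining $u\in\Aut(X)$ with $\varphi_2^u=\varphi_1$ and $d(u,1_X)\preceq d_H(\varphi_1,\varphi_2)\le\delta$. I would then define $\psi\colon G*\mathbb{Z}\to\Aut(X)$ by $\psi_G:=\varphi_G$ and $\psi(t):=u\varphi(t)$, exactly as in the proof of Theorem~\ref{operator_HNN}. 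A direct check gives $i^*(\psi)^{\psi(t)}(h)=u\varphi(t)\,\varphi(i(h))\,\varphi(t)^{-1}u^{-1}=\varphi_2^u(h)=\varphi_1(h)=i^*(\psi)(h)$, so $\psi$ sends every relator of $E_0$ to the identity and hence factors through $G*_{i,i}$. Finally, since $\psi$ agrees with $\varphi$ on the generators of $G$ and differs only at $t$, bi-invariance gives $d_{S\sqcup\{t\}}(\varphi,\psi)=d(\varphi(t),u\varphi(t))=d(1_X,u)\preceq\delta$, which is at most $\varepsilon$ once $\delta$ is chosen small enough.

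There is essentially no genuine obstacle here beyond recognizing the conjugacy that trivializes the $\#$-matching; the only routine points are the conjugation bookkeeping under the convention $\varphi^t(a)=t\varphi(a)t^{-1}$, and checking that the implicit constant in $\preceq$ coming from Proposition~\ref{prop_conj} is absorbed by shrinking $\delta$. The real difficulty of the paper is deferred to the genuinely different constructions — group doubles and the case $i_1\neq i_2$ — where no such automatic conjugacy between the two restricted homomorphisms is available.
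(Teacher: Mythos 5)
Your proposal is correct and follows essentially the same route as the paper: the paper's proof of Theorem~\ref{double_HNN} is precisely to observe that when the two edge inclusions coincide, $i^*(\varphi)$ and $i^*(\varphi)^{\varphi(t)}$ are conjugate, hence have equal $\#$-classes, so the conjugator argument (Proposition~\ref{prop_conj}) applies and the rest repeats the proof of Theorem~\ref{operator_HNN} verbatim, with $\psi_G:=\varphi_G$ and $\psi(t):=u\varphi(t)$. Your write-up merely makes explicit the bookkeeping (the relator check, the bi-invariance estimate, and the reduction to a stable epimorphism via Proposition~\ref{st_and_st_ep_give_st}) that the paper leaves implicit.
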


Let us also mention an interesting property that connects $||\cdot||$ on $\Lambda(H)$ and the distance between homomorphisms. 

\begin{lemma}\label{distances_connection}
    Let $H$ be a finite group. Then for $\varphi_1,\varphi_2\colon H\to \Aut(X)$ we have
    $$||\varphi_1^\#-\varphi_2^\#||\preceq d_H(\varphi_1,\varphi_2)^\frac{1}{s}|X|.$$

    The ``converse'' is also true. For any $\varphi_1\colon H\to\Aut(X)$ and any $\xi\in\rp(H)$ satisfying $||\xi||=|X|$ there exists $\varphi_2\colon H\to\Aut(X)$ such that 
    $$d_H(\varphi_1,\varphi_2)|X|^s\preceq ||\varphi_1^\#-\xi||^s\quad\text{and}\quad \varphi_2^\#=\xi.$$
\end{lemma}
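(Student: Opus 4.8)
The plan is to reduce both inequalities to a single combinatorial identity for the norm $||\cdot||$ on $\rp(H)$. Since $\varphi_1,\varphi_2$ (and $\xi$) are all homomorphisms into $\Aut(X)$ or elements of norm $|X|$, writing $\mu:=\min(\varphi_1^\#,\varphi_2^\#)$ and $\varphi_1^\#=\mu+r_1$, $\varphi_2^\#=\mu+r_2$ with $r_1,r_2\ge 0$ of disjoint support, additivity of $||\cdot||$ on nonnegative elements gives
$$||\varphi_1^\#-\varphi_2^\#||=||r_1||+||r_2||=2(|X|-||\mu||).$$
Thus the first inequality is equivalent to showing the common part is large, namely $|X|-||\mu||\preceq d_H(\varphi_1,\varphi_2)^{1/s}|X|$; concretely it suffices to exhibit a subobject $X_\mu\subseteq X$ that is invariant and an isomorphic subhomomorphism for both $\varphi_1$ and $\varphi_2$, with $|X|-|X_\mu|\preceq d_H^{1/s}|X|$, because then $\mu\ge\varphi_1|_{X_\mu}^\#$.

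For the first inequality and $\mathu_P$ I would reuse the set $X_0=\{x\in X:\varphi_1(h)x=\varphi_2(h)x\ \forall h\in H\}$ from the proof of Proposition~\ref{prop_conj}: it is a common subaction on which $\varphi_1$ and $\varphi_2$ literally coincide, and the same counting gives $|X\setminus X_0|\le|H|\cdot|X|\cdot d_H$. Since $s=1$ this is exactly the required bound. For $\mathu_{HS}$ the real difficulty appears, because a naive character/trace estimate only produces a bound linear in $d_H$, whereas $1/s=2$ demands a quadratic one. Here I would introduce the intertwiner $a=\frac{1}{|H|}\sum_{h\in H}\varphi_1(h)\varphi_2(h)^{-1}$ from Proposition~\ref{matr_prop}, which satisfies $\varphi_1(h)a=a\varphi_2(h)$, $||a||_{op}\le 1$ and (after right-multiplying by the unitary $\varphi_2(h)^{-1}$) $||a-1_X||_{HS}\le d_H$. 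Then $a^*a$ commutes with $\varphi_2(H)$, has spectrum in $[0,1]$, and by Lemma~\ref{ineq_HS}
$$||a^*a-1_X||_{HS}\le||a^*||_{op}||a-1_X||_{HS}+||a^*-1_X||_{HS}\le 2d_H.$$
Let $Q$ be the spectral projection of $a^*a$ onto $(1/2,1]$; it is $\varphi_2$-invariant, and since each eigenvalue $\le 1/2$ contributes at least $1/4$ to $\sum_i(\lambda_i-1)^2=|X|\,||a^*a-1_X||_{HS}^2\le 4|X|d_H^2$, we get $\Tra(1_X-Q)\preceq d_H^2$. On the range of $Q$ the operator $a$ is bounded below, so it restricts to a $\varphi$-equivariant isomorphism onto a $\varphi_1$-invariant subspace; hence the range of $Q$ is a common subobject and $|X|-||\mu||\le|X|\,\Tra(1_X-Q)\preceq d_H^2|X|=d_H^{1/s}|X|$.

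For the converse I would build $\varphi_2$ so that it shares the common part of $\varphi_1$ and $\xi$ and is arbitrary on the rest. Set $\mu=\min(\varphi_1^\#,\xi)$ and decompose $\varphi_1=\varphi_1|_{X_\mu}\sqcup\varphi_1|_{X_\mu^\perp}$, where $X_\mu$ is a $\varphi_1$-invariant subobject of type $\mu$ (possible since $\mu\le\varphi_1^\#$) and $X_\mu^\perp$ is of type $r_1:=\varphi_1^\#-\mu$. Writing $\xi=\mu+r_2$, the equalities $||\xi||=||\varphi_1^\#||=|X|$ force $||r_2||=||r_1||=|X_\mu^\perp|$, so there is a homomorphism $\rho$ of type $r_2$ on an object of size $|X_\mu^\perp|$. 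Define $\varphi_2:=\varphi_1|_{X_\mu}\sqcup\rho$, which satisfies $\varphi_2^\#=\mu+r_2=\xi$. Since $\varphi_1$ and $\varphi_2$ coincide on $X_\mu$ and differ only on $X_\mu^\perp$, where the distance is bounded by a constant, Lemma~\ref{property_sums} yields
$$d_H(\varphi_1,\varphi_2)\preceq\left(\frac{||r_1||}{|X|}\right)^s,$$
whence $d_H(\varphi_1,\varphi_2)|X|^s\preceq||r_1||^s=\left(\tfrac12||\varphi_1^\#-\xi||\right)^s\preceq||\varphi_1^\#-\xi||^s$, as required.

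The main obstacle is the $\mathu_{HS}$ half of the first inequality: upgrading the linear bound that characters give into the quadratic bound $d_H^2|X|$ that matches $1/s=2$. Two features make it work. First, $a$ is an average of unitaries, so $||a||_{op}\le 1$ pins all singular values into $[0,1]$ and lets the cross term in $||a^*a-1_X||_{HS}$ be absorbed. Second, the spectral gap at $1/2$ converts the $L^2$-smallness of $a^*a-1_X$ into an honest dimension count for the common subrepresentation realized inside the range of $Q$. Everything else — the reduction identity, the $\mathu_P$ case, and the converse — is bookkeeping with the norm $||\cdot||$ together with Lemma~\ref{property_sums}.
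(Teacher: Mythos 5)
Your proof is correct, but it takes a genuinely different route from the paper: the paper does not prove Lemma~\ref{distances_connection} at all --- it cites Proposition 4.3 and Lemma 6.2 of \cite{lazarovich2021virtually} for $\mathu_P$, and Theorem 7.3 of \cite{gowers2017inverse} together with Lemma 4 of \cite{ger2021} for $\mathu_{HS}$. You replace these citations by a self-contained argument assembled from ingredients already in the paper: the identity $||\varphi_1^\#-\varphi_2^\#||=2\left(|X|-||\min(\varphi_1^\#,\varphi_2^\#)||\right)$ reduces both directions to exhibiting a large common subhomomorphism class; for $\mathu_P$ the fixed-point set $X_0$ from the proof of Proposition~\ref{prop_conj} gives the linear bound that $s_P=1$ requires; for $\mathu_{HS}$ you take the averaged intertwiner $a$ of Proposition~\ref{matr_prop}, use $\varphi_1(h)a=a\varphi_2(h)$ and $||a^*a-1_X||_{HS}\le 2d_H(\varphi_1,\varphi_2)$, and convert this $L^2$-smallness into a rank bound via the spectral projection $Q$ of $a^*a$ onto $(1/2,1]$ --- which is exactly what produces the quadratic exponent $1/s_{HS}=2$; the converse is a cut-and-replace construction controlled by Lemma~\ref{property_sums}. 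In effect you have reproved the needed cases of the cited results, which makes the lemma self-contained and exposes where the two exponents come from; the paper's citation is shorter and rests on published proofs. Two spots in your $\mathu_{HS}$ argument deserve one extra line each: (i) the phrase ``the range of $Q$ is a common subobject'' is loose --- what you actually produce is a pair of subspaces, $QX$ invariant under $\varphi_2$ and $a(QX)$ invariant under $\varphi_1$, with isomorphic restrictions, and that is all the reduction needs, since their common class $\nu$ then satisfies $\nu\le\min(\varphi_1^\#,\varphi_2^\#)$; (ii) the step from the linear intertwining isomorphism $a|_{QX}$ to equality of the classes of $\varphi_2|_{QX}$ and $\varphi_1|_{a(QX)}$ should be justified in a word, e.g.\ by equality of characters (as $\varphi_1(h)$ on $a(QX)$ is conjugate to $\varphi_2(h)$ on $QX$ via $a$) or by polar decomposition of the intertwiner.
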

\begin{proof}
    The first part of the statement is the special case of Proposition 4.3 in \cite{lazarovich2021virtually} for $\mathu_P$ and is the part of Theorem 7.3 in \cite{gowers2017inverse} for $\mathu_{HS}$. 
    The second part follows from Lemma 6.2 in \cite{lazarovich2021virtually} for $\mathu_P$ and from Lemma 4 in \cite{ger2021} for $\mathu_{HS}$.
\end{proof}

\subsection{Sufficient conditions for flexible stability of amalgamated free products}
From Proposition~\ref{prop_conj} we see that in general for $\mathu_P$ and $\mathu_{HS}$ there is an additional ``discrete'' problem compared to $\mathu_{op}$ -- one needs to find an appropriate element in $\rp$ before constructing the homomorphism of a group. Let us show how the precise statement looks for flexible stability of an amalgamated free product.  

\begin{theorem}\label{main_flex}
    Assume that for any $\varepsilon>0$ there exists $\delta>0$ such that
    for any $\xi_j \in \rp(G_j)$ $(j=1,2)$ satisfying 
    $$||\xi_1||=||\xi_2||, \quad  ||i_1^*(\xi_1)-i_2^*(\xi_2)||\le \delta ||\xi_1||$$
    there exist $\xi_j',\eta_j\in \rp(G_j)$ such that
    \begin{equation}\label{conditions_flex}
        \xi_j'\le \xi_j, \quad i_1^*(\xi_1'+\eta_1)=i_2^*(\xi_2'+\eta_2), \quad ||\eta_j||, ||\xi_j-\xi_j'|| \le \varepsilon ||\xi_1||.
    \end{equation}
    Then the epimorphism $\pi\colon G_1*G_2 \to G_1*_H G_2$ is flexibly $\mathu$-stable.  
\end{theorem}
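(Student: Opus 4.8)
The plan is to take $E=E_0$ and unwind flexible stability of $\pi$ directly into the discrete hypothesis. Given $\varphi\colon G_1*G_2\to\Aut(X)$ with $d_{E_0}(\varphi,1_X)\le\delta$, the identity $d_{E_0}(\varphi,1_X)=d_{S_H}(i_1^*(\varphi),i_2^*(\varphi))$ from the amalgam construction (with $S_H=H$) says precisely that the two genuine $H$-homomorphisms $i_1^*(\varphi_1)$ and $i_2^*(\varphi_2)$ are $d_H$-close. Setting $\xi_j:=\varphi_j^\#$ we have $||\xi_1||=||\xi_2||=|X|$, and the first part of Lemma~\ref{distances_connection} applied to these two $H$-homomorphisms gives $||i_1^*(\xi_1)-i_2^*(\xi_2)||\preceq\delta^{1/s}|X|$. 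Thus, after fixing a small target $\varepsilon_0=\varepsilon_0(\varepsilon)$, taking the $\delta_0$ produced by the hypothesis for $\varepsilon_0$, and choosing $\delta$ so that $\delta^{1/s}\preceq\delta_0$, the hypothesis applies and returns $\xi_j',\eta_j\in\rp(G_j)$ with $\xi_j'\le\xi_j$, $i_1^*(\xi_1'+\eta_1)=i_2^*(\xi_2'+\eta_2)=:\zeta$, and $||\eta_j||,||\xi_j-\xi_j'||\le\varepsilon_0|X|$.

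Next I would build the homomorphism. Because $\xi_j'\le\varphi_j^\#$, there is a $\varphi_j$-invariant subobject $X_j'\subseteq X$ with $(\varphi_j|_{X_j'})^\#=\xi_j'$, and $|X\setminus X_j'|=||\xi_j-\xi_j'||\le\varepsilon_0|X|$. Adjoining a fresh $G_j$-homomorphism $\rho_j$ of class $\eta_j$ I set $\psi_j:=\varphi_j|_{X_j'}\sqcup\rho_j$, a genuine homomorphism of $G_j$ of dimension $||\xi_j'+\eta_j||=||\zeta||=:N$, the same for $j=1,2$. Since $i_1^*(\psi_1)$ and $i_2^*(\psi_2)$ both have class $\zeta$, I place them on one space of dimension $N$ via an identification chosen to respect the $\varphi$-structure on the large common part, so that only the small discarded pieces $X\setminus X_j'$ and the small fresh pieces can create a mismatch; under this identification the two $H$-homomorphisms are $d_H$-close, with distance $\preceq\delta+\varepsilon_0^{s}$. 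The conjugator argument, Proposition~\ref{prop_conj}, then supplies $t$ with $d(t,1)\preceq\delta+\varepsilon_0^{s}$ and $(i_2^*(\psi_2))^t=i_1^*(\psi_1)$; replacing $\psi_2$ by $\psi_2^t$ makes the two agree on $H$, so the pair defines a homomorphism $\psi\colon G_1*_HG_2\to\Aut(Y)$.

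Finally I would check the two flexible-stability requirements. To arrange $X\subseteq Y$ I re-adjoin the discarded part with the trivial action (a small direct summand), which keeps $|Y|\le(1+O(\varepsilon_0))|X|$; the estimate $|Y|\le(1+\varepsilon)|X|$ then follows for $\varepsilon_0$ small. For $d_S(\varphi,\psi|_X)$ I would combine the restriction lemmas: on the large invariant part $\psi|_X$ equals $\varphi$ up to the conjugator and up to passing through $X_j'$, contributing $\preceq\delta+\varepsilon_0^{s}$, while Lemmas~\ref{lem_restr_useful},~\ref{lem_double_restr},~\ref{property_restr} and~\ref{property_sums} bound the contribution of the small discarded and freshly added pieces by $\preceq\varepsilon_0^{s}$. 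Choosing $\varepsilon_0$ and then $\delta$ small enough forces the total below $\varepsilon$.

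The conceptual reduction — turning the analytic statement into the purely combinatorial hypothesis on $\rp(G_1),\rp(G_2)$ — is clean once Lemma~\ref{distances_connection} and Proposition~\ref{prop_conj} are in hand. The main obstacle I expect is the bookkeeping in the middle step: realizing both $\psi_1$ and $\psi_2$ on a single $Y\supseteq X$ while keeping $\psi|_X$ close to $\varphi$. This forces one to reconcile the two \emph{different} invariant subobjects $X_1'$ and $X_2'$ inside $X$, to choose the identification of the two $N$-dimensional spaces so that the conjugator produced by Proposition~\ref{prop_conj} is genuinely close to the identity, and to control the accumulation of the several $\varepsilon_0^{s}$-sized errors coming from the discarded and added pieces. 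Everything else is a matter of choosing the constants $\varepsilon_0,\delta$ in the right order.
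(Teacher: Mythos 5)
Your proposal is correct and follows essentially the same route as the paper's proof: translate closeness of $\varphi$ into the discrete hypothesis via Lemma~\ref{distances_connection}, realize $\xi_j'$ and $\eta_j$ geometrically as $\varphi_j|_{X_j'}\sqcup\rho_j$ on a common object, make the two $H$-restrictions agree with Proposition~\ref{prop_conj}, and control all errors with the restriction lemmas. The only (immaterial) difference is that the paper pads the $\eta_j$ with trivial summands \emph{before} the conjugator step, so that the common object $Z$ contains $X$ from the start, whereas you re-adjoin the discarded part with the trivial action afterwards.
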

    Informally speaking, this means that we want to ``cut’’ some small parts of homomorphisms of $G_j$ and then add something small to get some new homomorphisms of $G_j$ with close and isomorphic restrictions to $H$. 
\begin{proof}
    Let us fix any $\varepsilon>0$ and $\delta=\delta(\varepsilon)>0$ as above. 
    Any homomorphism  $\varphi\colon G_1*G_2\to \Aut(X)\in\mathu$ satisfying $d_{H}(i_{1}^*(\varphi),i_2^*(\varphi))\le \delta^s$ by Lemma~\ref{distances_connection} gives us $\xi_j:=\varphi_{j}^\#\in \rp(G_j)$ that satisfy 
    $$||i_1^*(\xi_1)-i_2^*(\xi_2)|| \preceq \delta ||\xi_1||, \quad ||\xi_1||=||\xi_2||.$$
    By our assumption, there exist $\xi_j', \eta_j \in \rp(G_j)$ such that conditions (\ref{conditions_flex}) hold. 
    We can always add a multiple of $1\in\rp(G_j)$ both to $\eta_1$ and to $\eta_2$, so we can assume that $||\xi_j'||+||\eta_j||\ge ||\xi_j||$. This changes $||\eta_j||$ by at most $||\xi_j-\xi_j'||$, so conditions (\ref{conditions_flex}) still hold for $2\varepsilon$ instead of $\varepsilon$. 
    Let us take $\varphi(G_j)$-invariant subobjects $X_j\subseteq X$ such that $\varphi_{j}|_{X_j}^\# =\xi_j'$ (they exist since $\xi_j'\le\xi_j$).
    Let us also take some homomorphisms $\alpha_j\colon G_j\to \Aut(Y_j)\in\mathu$ such that $\alpha_j^\#=\eta_j$, where $Y_j\in\Cat$ is some object with $|Y_j|=||\eta_j||$. 

    We can identify $X_1\coprod Y_1$ and $X_2\coprod Y_2$ with some $Z\supseteq X$ of a proper size. 
    By construction $Z$ is equipped with a homomorphism $\beta\colon G_1*G_2\to \Aut(Z)$, $\beta_{j}=\left(\varphi_{j}|_{X_j}\right)\sqcup \alpha_j$.   
    Note that $X_j$ is $G_j$-invariant and $\varphi_j|_{X_j}=\beta_j|_{X_j}$, hence $\varphi_{j}|_{X_1\cap X_2}=\beta_{j}|_{X_1\cap X_2}$. 
    Also we have $d_H(i_1^*(\varphi),i_2^*(\varphi))\le \delta^s$.
    So by Lemma~\ref{property_restr} we get  
    $$d_H(i_1^*(\beta)|_{X_1\cap X_2},i_2^*(\beta)|_{X_1\cap X_2})\preceq
    \delta^s\cdot\left(\frac{|X|}{|X_1\cap X_2|}\right)^s+\left(\frac{|X|-|X_1\cap X_2|}{|X_1\cap X_2|}\right)^s.$$
    By construction $|Z|-|X_1\cap X_2|\preceq \varepsilon |X|\le\varepsilon |Z|$. Applying again Lemma~\ref{property_restr} we get
    $$d_H(i_1^*(\beta),i_2^*(\beta))\preceq\delta^s\cdot\left(\frac{|X|}{|Z|}\right)^s+\left(\frac{|X|-|X_1\cap X_2|}{|Z|}\right)^s+\left(\frac{|Z|-|X_1\cap X_2|}{|Z|}\right)^s\preceq \delta^s+\varepsilon^s.$$

    We are ready to apply the conjugator argument. By Lemma~\ref{prop_conj} there exists $t\in \Aut(Z)$ such that 
    $$d(t,1)\preceq \delta^s+\varepsilon^s,\quad (i_2^*(\beta))^t=i_1^*(\beta).$$
    We define $\psi\colon G_1*G_2\to \Aut(Z)$ by $\psi_1=\beta_1$ and $\psi_2=\beta_2^t$. By construction $i_1^*(\psi)=i_2^*(\psi)$ and $|Z|-|X|\preceq \varepsilon |X|$. 
    We have by Lemma~\ref{property_restr}
    $$d_{S_1\sqcup S_2}(\beta|_X, \psi|_X)\preceq (\delta^s+\varepsilon^s)\cdot \left(\frac{|Z|}{|X|}\right)^s+\left(\frac{|Z|-|X|}{|X|}\right)^s\preceq \delta^s+\varepsilon^s,$$
    $$d_{S_j}(\varphi_j, \beta_j|_{X})\preceq \left(\frac{|X|-|X_j|}{|X|}\right)^s\preceq \varepsilon^s.$$
    Hence, 
    $$d_{S_1\sqcup S_2}(\varphi, \psi|_X)\preceq d_{S_1\sqcup S_2}(\varphi, \beta|_X)+ d_{S_1\sqcup S_2}(\beta|_X,\psi|_X)\preceq \delta^s+\varepsilon^s.$$
    Since $\delta(\varepsilon)^s+\varepsilon^s$ and $\varepsilon$ can be made arbitrary small for small enough $\varepsilon$, we finished the proof.    
\end{proof}

If one takes $\xi_j'=\xi_j$, then the statement of Theorem~\ref{main_flex} can be reformulates in the following way.  

\begin{cor}\label{cor_flex}
Assume that for any $\varepsilon>0$ there exists $\delta>0$ such that for any $\xi_j\in i_j^*(\rp(G_j))$ $(j=1,2)$ with 
$||\xi_1-\xi_2||\le\delta||\xi_1||$ and $||\xi_1||=||\xi_2||$ there exist $\eta_j\in i_j^*(\rp(G_j))$ such that 
$$\xi_1+\eta_1=\xi_2+\eta_2, \quad ||\eta_j||\le \varepsilon ||\xi_j||.$$
Then the epimorphism $\pi\colon G_1*G_2 \to G_1*_H G_2$ is flexibly $\mathu$-stable.  
\end{cor}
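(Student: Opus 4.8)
The plan is to deduce this statement directly from Theorem~\ref{main_flex}, specializing to the case $\xi_j' = \xi_j$ as the preceding sentence suggests. The one structural fact I would isolate first is that each restriction map $i_j^*\colon \rp(G_j)\to\rp(H)$ preserves the norm $||\cdot||$: restricting a representation of $G_j$ to $H$ leaves its dimension unchanged, and restricting a $G_j$-set to $H$ leaves its cardinality unchanged. Thus $||i_j^*(\xi)|| = ||\xi||$ for all $\xi\in\rp(G_j)$, and in particular any $\omega\in i_j^*(\rp(G_j))$ has a preimage in $\rp(G_j)$ of the same norm.

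Given $\varepsilon>0$, I would take $\delta=\delta(\varepsilon)$ from the hypothesis of the corollary and claim the same $\delta$ verifies the hypothesis of Theorem~\ref{main_flex}. So suppose $\xi_j\in\rp(G_j)$ satisfy $||\xi_1||=||\xi_2||$ and $||i_1^*(\xi_1)-i_2^*(\xi_2)||\le\delta||\xi_1||$. Setting $\zeta_j:=i_j^*(\xi_j)\in i_j^*(\rp(G_j))$, norm-preservation gives $||\zeta_j||=||\xi_j||$, hence $||\zeta_1||=||\zeta_2||$ and $||\zeta_1-\zeta_2||\le\delta||\zeta_1||$ --- exactly the input required by the assumption of the corollary applied to $\zeta_1,\zeta_2$.

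Next I would apply that assumption to obtain $\omega_j\in i_j^*(\rp(G_j))$ with $\zeta_1+\omega_1=\zeta_2+\omega_2$ and $||\omega_j||\le\varepsilon||\zeta_j||$. Picking any preimages $\eta_j\in\rp(G_j)$ of $\omega_j$ (automatically with $||\eta_j||=||\omega_j||$) and putting $\xi_j':=\xi_j$, I would check the three conditions (\ref{conditions_flex}): the relations $\xi_j'\le\xi_j$ and $||\xi_j-\xi_j'||=0$ are immediate; the bound $||\eta_j||=||\omega_j||\le\varepsilon||\zeta_j||=\varepsilon||\xi_j||=\varepsilon||\xi_1||$ follows from $||\xi_1||=||\xi_2||$; and $i_1^*(\xi_1'+\eta_1)=\zeta_1+\omega_1=\zeta_2+\omega_2=i_2^*(\xi_2'+\eta_2)$. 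Theorem~\ref{main_flex} then yields flexible $\mathu$-stability of $\pi$.

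Since this is a bookkeeping reduction, I do not anticipate a genuine obstacle. The one point that must be handled with care is precisely the norm-preservation of $i_j^*$ together with the existence of equal-norm preimages, because the quantitative bounds in the two statements are phrased with respect to norms living in different semigroups ($\rp(G_j)$ in the theorem, $\rp(H)$ in the corollary), and they match up only because restriction to a subgroup is an isometry for $||\cdot||$.
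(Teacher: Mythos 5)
Your proof is correct and takes essentially the same route as the paper: the paper presents the corollary as precisely the specialization of Theorem~\ref{main_flex} to $\xi_j'=\xi_j$, and your argument just fills in the bookkeeping that makes the two hypotheses match. In particular, your key observation --- that $i_j^*$ preserves $||\cdot||$ on the nonnegative cone $\rp(G_j)$ (so elements of $i_j^*(\rp(G_j))$ admit equal-norm preimages, and the bounds stated in $\Lambda(H)$ translate into the bounds stated in $\Lambda(G_j)$) --- is exactly the implicit content of the paper's one-sentence reformulation.
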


This corollary is especially interesting, since instead of $\rp(G)$ that can be infinitely generated it uses $i_j^*(\rp(G_j))$ that lies inside a finitely generated free abelian group $\Lambda(H)$. 
To apply Corollary~\ref{cor_flex} we need to do some preliminary work.

\subsection{Stability of group doubles}
We will start with some definitions and technical lemmas.
\begin{definition}
We will call a semigroup generated by a subset $A\subset \mathbb{Z}^N$ and $0$ a cone generated by $A$ $($a subcone of $\mathbb{Z}^N)$.  
\end{definition}

By $\rcone$ we denote the subsemigroup of $\mathbb{R}^N$ consisting of all linear combinations of vectors of $\cone$ with non-negative coefficients. 
We will denote by $\latt$ the abelian group generated by $K$.
In this subsection we will write $f\preceq g$ when $f\le C\cdot g$ for some constant $C$ that depends only on cones.

The following lemma, informally speaking, means that cones can be not maximally dense near borders but they are maximally dense sufficiently far from them.  

\begin{lemma}\label{dense}
For any finitely generated cone $\cone$ there exists a vector $w_0\in\cone$ such that 
$$\latt\cap(w_0+\rcone)\subseteq \cone.$$  
\end{lemma}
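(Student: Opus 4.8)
The plan is to show that every lattice point of the real cone differs from a genuine semigroup element by a remainder drawn from a fixed \emph{finite} set, and then to build $w_0$ so that it simultaneously absorbs all of these finitely many remainders into $\cone$.

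First I would fix generators $a_1,\dots,a_m\in\mathbb{Z}^N$ of $\cone$ and introduce the half-open parallelepiped $\Pi=\{\sum_{i=1}^m t_i a_i : 0\le t_i<1\}$. Since the $a_i$ are fixed vectors and the coefficients are bounded, $\Pi$ is a bounded subset of $\mathbb{R}^N$; because $\latt$ is a finitely generated subgroup of $\mathbb{Z}^N$, hence discrete, the set $R:=\latt\cap\Pi$ is finite (and contains $0$). The decomposition step is then the heart of the argument: given any $v\in\latt\cap\rcone$, write $v=\sum t_i a_i$ with $t_i\ge0$ (possible since $v\in\rcone$) and split $t_i=\lfloor t_i\rfloor+\{t_i\}$. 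The term $\sum\lfloor t_i\rfloor a_i$ is a non-negative integer combination, hence lies in $\cone$, while
$$r:=\sum_i\{t_i\}a_i=v-\sum_i\lfloor t_i\rfloor a_i$$
lies in $\Pi$ and, being a difference of two elements of $\latt$, lies in $\latt$; thus $r\in R$. So every $v\in\latt\cap\rcone$ equals an element of $\cone$ plus an element of the finite set $R$.

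Next I would absorb the remainders. For each $r\in R$ write $r=\sum_i k_i a_i$ with $k_i\in\mathbb{Z}$ (using $r\in\latt$) and set $c_r:=\sum_i \max(-k_i,0)\,a_i\in\cone$, so that $r+c_r=\sum_i\max(k_i,0)\,a_i\in\cone$. Defining $w_0:=\sum_{r\in R}c_r\in\cone$, for each fixed $r\in R$ one has
$$w_0+r=(r+c_r)+\sum_{r'\ne r}c_{r'}\in\cone,$$
since both summands lie in $\cone$. Finally, for any $v\in\latt\cap(w_0+\rcone)$ put $v':=v-w_0\in\rcone$, which also lies in $\latt$ because $v,w_0\in\latt$; by the decomposition above $v'=c+r$ with $c\in\cone$ and $r\in R$, whence $v=c+(w_0+r)\in\cone$. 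This gives $\latt\cap(w_0+\rcone)\subseteq\cone$, as required.

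I expect the only delicate point to be the \emph{simultaneous} correction of all remainders: each $r\in R$ can individually be pushed into $\cone$ by adding a suitable cone element $c_r$, but one needs a single $w_0$ that works for every $r$ at once. The trick is that summing all the $c_r$ does the job, because for any particular $r$ the leftover $\sum_{r'\ne r}c_{r'}$ is automatically in $\cone$. The finiteness of $R$ — guaranteed by discreteness of $\latt$ together with boundedness of $\Pi$ — is precisely what makes this global correction possible.
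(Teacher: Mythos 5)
Your proof is correct, and it takes a genuinely different route from the paper's. The paper fixes generators $v_1,\dots,v_k$ of $\cone$ and takes $w_0=L\sum_i v_i$ for a suitably large $L$: given $w\in\latt\cap(w_0+\rcone)$ it compares an integer representation $w=\sum_i k_iv_i$ (from $w\in\latt$) with a real representation $w=\sum_i\nu_iv_i$, $\nu_i\ge L$ (from $w\in w_0+\rcone$), writes the discrepancy $(\nu_i-k_i)_i$ as a real combination of a generating set $\lambda^j$ of the group of integer relations among the $v_i$, and rounds those coefficients; the choice $L=\sum_{i,j}|\lambda_i^j|$ guarantees the rounded coefficients are non-negative integers still representing $w$. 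You replace this syzygy-correction argument by a fundamental-parallelepiped argument: every element of $\latt\cap\rcone$ is a cone element plus a remainder lying in the finite set $R=\latt\cap\Pi$, and $w_0$ is assembled as a sum of correctors $c_r$, one per remainder, so that it absorbs all remainders simultaneously. Your version is arguably more elementary and self-contained: the paper's step expressing a real relation as a real combination of \emph{integer} relations silently uses that the real kernel of the matrix with columns $v_i$ is spanned by its integer points (true, since that kernel is a rational subspace), and it needs the rounding estimate, whereas your decomposition and absorption steps are immediate once $R$ is known to be finite. Both proofs are constructive from finite data --- generators of the relation lattice in the paper, the lattice points of the parallelepiped in yours --- and both yield an explicit, if different, $w_0$.
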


\begin{proof}
Let us take a finite generating set $\{v_i\}, i=1\dots k$ for $\cone$. We will consider $w_0$ of the form $w_0=L\sum_{i} v_i$, where $L$ is a large enough number.
For any vector $w\in \latt\cap(w_0+\rcone)$ we know that
$$w = \sum_{i} k_i\cdot v_i, \quad k_i\in\mathbb{Z},$$
$$ w = \sum_{i} \nu_i\cdot v_i, \quad \nu_i\ge L.$$
The set of all integer relations between $v_i$ forms a finitely generated abelian group. Let $\lambda^j$ be its generating set, each $\lambda^j$ is a vector in $\mathbb{Z}^k$ with coordinates $\lambda^j_i\in\mathbb{Z}$. We have 
$$\sum_i(k_i-\nu_i)v_i=0 .$$
Hence, 
$$\nu_i-k_i=\sum_j \alpha_j \lambda_i^j .$$
For $L= \sum_{i,j}|\lambda_i^j|$ we have 
$$\forall i \quad \sum_j(\alpha_j-[\alpha_j])\lambda_i^j\le L,$$  
where $[x]$ is an integer part of $x$. Now we can take
 $$\nu_i'=k_i+\sum_j[\alpha_j]\lambda_i^j=\nu_i-\sum_j(\alpha_j-[\alpha_j])\lambda_i^j.$$ 
From the first part of the equality we can see that $\nu_i'\in \mathbb{Z}$ and from the second part we can see that $\nu_i'\ge 0$. One can see that $w=\sum_i\nu_i' v_i$, hence $w\in\cone$. 
\end{proof}

\begin{prop}\label{double_prop}
For any cone $K\subseteq\mathbb{Z}_{\ge 0}^N$ and any $\xi_1, \xi_2\in \cone$ there exist $\eta_1, \eta_2\in\cone $ such that 
$$\xi_1+\eta_1=\xi_2+\eta_2,\quad ||\eta_1||,||\eta_2||\preceq ||\xi_1-\xi_2||.$$
\end{prop}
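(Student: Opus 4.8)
The plan is to reduce the statement to a one-sided problem and then combine the "density near infinity" provided by Lemma~\ref{dense} with elementary polyhedral geometry. First I would set $\delta:=\xi_1-\xi_2\in\latt$ and observe that finding $\eta_1,\eta_2\in\cone$ with $\xi_1+\eta_1=\xi_2+\eta_2$ amounts to finding a single $q\in\cone$ with $q+\delta\in\cone$: taking $\eta_1=q$ and $\eta_2=q+\delta$ gives $\xi_1+q=\xi_2+(q+\delta)$ automatically, and $\|\eta_2\|\le\|q\|+\|\delta\|$. So the whole content is to produce $q\in\cone$ with $q+\delta\in\cone$ and $\|q\|\preceq\|\delta\|$. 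Because $\delta$ lies in the lattice $\latt\subseteq\mathbb{Z}^N$, the nonzero values of $\|\cdot\|$ on it are bounded below by some constant $c_0>0$; hence it suffices to prove an affine bound $\|q\|\le C\|\delta\|+C'$ and to treat $\delta=0$ (where $q=0$ works) separately, since for $\delta\neq0$ the additive constant is absorbed: $C'\le (C'/c_0)\|\delta\|$.

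Next I would fix three constants depending only on $\cone$. The vector $w_0\in\cone$ from Lemma~\ref{dense}, so that $\latt\cap(w_0+\rcone)\subseteq\cone$; the covering radius $R_0$ of the lattice $\latt$ inside its real span $\rlatt$ (finite, as $\latt$ is a full-rank lattice in $\rlatt$); and an interior vector $u\in\rcone$ such that every point of $\rlatt$ within distance $R_0$ of $u$ still lies in $\rcone$, which exists because $\rcone$ is full-dimensional in $\rlatt$ (the vector $\sum_i v_i$, for a finite generating set $\{v_i\}$ of $\cone$, is a relative-interior point, and a positive rescaling makes a fixed-radius ball fit inside the cone). The key elementary fact I would establish is a real \emph{balanced-difference} bound: there is a constant $C$ such that every $\delta\in\rlatt$ can be written $\delta=p_{\mathbb{R}}-q_{\mathbb{R}}$ with $p_{\mathbb{R}},q_{\mathbb{R}}\in\rcone$ and $\|p_{\mathbb{R}}\|,\|q_{\mathbb{R}}\|\le C\|\delta\|$. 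This follows from Carathéodory's theorem for the conic hull of $\{v_i\}\cup\{-v_i\}$, which equals all of $\rlatt$: one writes $\delta=\sum_{j\in J}c_jg_j$ with $c_j\ge0$ and $\{g_j\}_{j\in J}\subseteq\{v_i,-v_i\}$ linearly independent, whence $\|(c_j)\|\le\|G_J^{+}\|\,\|\delta\|$ for the pseudoinverse of the matrix $G_J$ of columns $g_j$; there are only finitely many such $J$, so the coefficients are uniformly controlled, and grouping the $v_i$- and $(-v_i)$-terms yields $p_{\mathbb{R}},q_{\mathbb{R}}$ with $q_{\mathbb{R}}+\delta=p_{\mathbb{R}}$.

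Then I would integralize using the slack. Set $\tilde q:=w_0+u+q_{\mathbb{R}}\in\rlatt$ and choose a lattice point $q\in\latt$ with $\|q-\tilde q\|\le R_0$. I claim both $q$ and $q+\delta$ lie in $\latt\cap(w_0+\rcone)$, hence in $\cone$ by Lemma~\ref{dense}. Indeed $q-w_0=(u+q_{\mathbb{R}})+(q-\tilde q)$, and since every point within distance $R_0$ of $u$ lies in $\rcone$, the same holds for $u+q_{\mathbb{R}}$ shifted by the error of norm $\le R_0$ (write $(u+q_{\mathbb{R}})+z=(u+z)+q_{\mathbb{R}}\in\rcone+\rcone\subseteq\rcone$); thus $q-w_0\in\rcone$. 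The identical computation with $q_{\mathbb{R}}+\delta=p_{\mathbb{R}}\in\rcone$ gives $q+\delta-w_0=(u+p_{\mathbb{R}})+(q-\tilde q)\in\rcone$. Setting $\eta_1=q$ and $\eta_2=q+\delta$ finishes the construction, and $\|q\|\le\|w_0\|+\|u\|+\|q_{\mathbb{R}}\|+R_0\le C\|\delta\|+C'$ gives the required bound after the discreteness reduction of the first paragraph.

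The main obstacle is the real balanced-difference bound together with its compatibility with rounding. Writing a lattice vector as a difference of two cone vectors is trivial, but doing so with norms \emph{linear} in $\|\delta\|$ is exactly the polyhedral/Carathéodory step, and converting the real solution into an integral one that still lies in $\cone$ is where the two technical ingredients are both essential: Lemma~\ref{dense} repairs the possible failure of lattice density near the boundary of the cone, while the interior vector $u$ supplies enough slack for the point to survive the rounding error bounded by the covering radius.
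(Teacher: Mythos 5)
Your argument is correct for \emph{finitely generated} cones, and in that case it is structurally close to the paper's own proof: both produce a real solution with norm linear in $||\xi_1-\xi_2||$, round it into the lattice using the covering radius, and push the result into the cone by adding $w_0$ from Lemma~\ref{dense} together with an interior vector supplying slack. (Where the paper gets the real solution from a two-dimensional picture in $\xi_1+span(\xi_1-\xi_2,u)$ with a $1/\sin\gamma$ bound, you get it from Carath\'eodory plus a pseudoinverse bound; either works.) The genuine gap is that you assume throughout that $\cone$ has a finite generating set $\{v_i\}$: explicitly in the Carath\'eodory step and in the construction of the interior vector $u$, and implicitly when you invoke Lemma~\ref{dense}, which is stated and proved only for finitely generated cones. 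The proposition, however, concerns an arbitrary subcone of $\mathbb{Z}_{\ge 0}^N$, i.e.\ a subsemigroup generated by a possibly infinite set, and such semigroups need not be finitely generated. This case is the whole point: in the application (Theorem~\ref{double_amalgam}) the cone is $i^*(\rp(G))$, and whether that is ever guaranteed to be finitely generated is precisely Question 1 of the paper.

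The failure is not merely formal, because the conclusion of Lemma~\ref{dense} can be false for non-finitely-generated cones, so you cannot apply it to $\cone$ as a black box. Take $\cone\subseteq\mathbb{Z}_{\ge 0}^2$ generated by $\{(n,n^2):n\ge 1\}$. Here $\latt=\{(a,b):a\equiv b \bmod 2\}$ and $\rcone=\{(0,0)\}\cup\{(x,y):x>0,\ y\ge x\}$. Any element of $\cone$ with first coordinate $k$ is $(k,\sum_i n_i^2)$ for a partition $(n_i)$ of $k$, hence is either $(k,k^2)$ or has second coordinate at most $(k-1)k$; in particular $(k,k^2-2)\notin \cone$ for $k>2$. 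Yet for any fixed $w_0\in \cone$ and all large $k$ the point $(k,k^2-2)$ lies in $\latt\cap(w_0+\rcone)$. So no $w_0$ witnesses Lemma~\ref{dense} for this $\cone$. The missing step is exactly the first paragraph of the paper's proof: exhaust $\cone=\bigcup_n \cone_n$ by the finitely generated subcones $\cone_n:=\mathbb{Z}_{\ge 0}\{\xi\in\cone:|\xi|\le n\}$, observe that $\mathbb{Z}\cone_n$ stabilizes (since $\latt$ is a finitely generated abelian group), fix $n_0$ with $\mathbb{Z}\cone_{n_0}=\latt$, and run your entire construction inside $\cone_{n_0}$, whose real span is still $\rlatt$. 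Since $\cone_{n_0}\subseteq\cone$, the vectors $\eta_1,\eta_2$ you produce lie in $\cone$ and your norm bounds are unchanged; with this one modification your proof is complete.
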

\begin{proof}
    Since all norms on $\mathbb{R}^N$ are equivalent, it is sufficient to prove this statement for the Euclidean norm. Let us define the sets $A_n:=\{\xi\in \cone: |\xi|\le n\}$ and denote $\cone_n:=\mathbb{Z}_{\ge 0}A_n$. Note that $\cone=\bigcup_n \cone_n$ and for $n'>n$, $\cone_n\subseteq \cone_{n'}$. Each element of $\mathbb{Z}\cone$ is in $\mathbb{Z}\cone_n$ for $n$ large enough, so there exists $n_0$ such that $\mathbb{Z}\cone_{n_0}=\mathbb{Z}\cone$. Let us take some $u\in K_{n_0}\cap \Int(\rcone_{n_0})$ (here the interior is taken inside $\rlatt$) and $\gamma>0$ such that $\gcone_{\gamma}(u)\subseteq \rcone_{n_0}$, where $\gcone_{\gamma}(u):=\{\xi\in\rlatt: \angle(\xi,u)\le\gamma\}$. 

    For any $\xi_1,\xi_2\in\cone$ if $\xi_1=\xi_2$ there is nothing to prove, so we can assume that $\xi_1\ne\xi_2$ and $1\le |\xi_1-\xi_2|$.
    
    One can consider two-dimensional affine space $\xi_1+span(\xi_1-\xi_2, u)$ (see Figure~\ref{fig:only}) and observe that not only $\xi_1+\gcone_{\gamma}(u)\cap\xi_2+\gcone_{\gamma}(u)\ne\emptyset$ but there exist $\eta_1, \eta_2 \in \gcone_{\gamma}(u)$ such that 
    $$\xi_1+\eta_1=\xi_2+\eta_2,\quad |\eta_1|,|\eta_2|\le \frac{|\xi_1-\xi_2|}{\sin\gamma}.$$
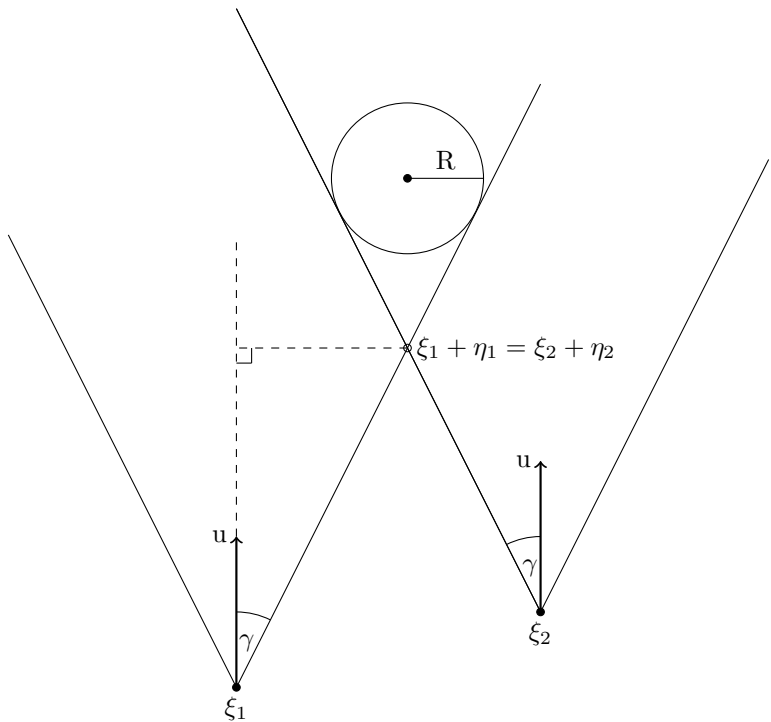
\begin{figure}[h]
    \centering
    \begin{tikzpicture}
    \draw (1,1) -- (-2,7); 
    \draw (5,2) -- (8,8); 
    \draw (5,2) -- (1,10); 
    \draw (3.25,7.75) circle (1cm);
    \draw[thick, ->] (5,2) -- (5,4);
    \draw[thick, ->] (1,1) -- (1,3);
    \draw[dashed] (1,3) -- (1,7);
    \draw[dashed] (3.25,5.5) -- (1,5.5);
    \draw (1.2,5.5) -- (1.2,5.3);
    \draw (1,5.3) -- (1.2,5.3);
    \draw (3.25, 7.75) -- (4.25, 7.75);
    \draw (3.75, 7.75) node[anchor=south] {R};
    \draw[fill] (1,1) circle (0.05) node[anchor=north] {$\xi_1$};
    \draw[fill] (5,2) circle (0.05) node[anchor=north] {$\xi_2$};
    \draw[fill] (3.25,7.75) circle (0.05);
    \draw (5,9) coordinate (A) -- (1,1) coordinate (B) -- (1,3) coordinate (C) node[anchor=east] {u} pic ["$\gamma$",draw, angle radius= 1cm] {angle = A--B--C};
    \draw (1,10) coordinate (A1) -- (5,2) coordinate (B1) -- (5,4) coordinate (C1) node[anchor=east] {u} pic ["$\gamma$",draw, angle radius= 1cm] {angle = C1--B1--A1};
    \draw (3.25,5.5) circle (0.05) node[anchor=west] {$\xi_1+\eta_1=\xi_2+\eta_2$};
    \end{tikzpicture}
    \caption{$\xi_1+span(\xi_1-\xi_2, u)$}
    \label{fig:only}
\end{figure}
    
    Since $\latt$ is cocompact in $\rlatt$, there exists $R$ such that any ball in $\rlatt$ of radius $R$ contains at least one point of $\latt$. Also $\gcone_{\gamma}(u)$ contains some ball of radius $R$ (taken in $\rlatt$), so we can change $\eta_1$ and $\eta_2$ by adding the same vector of at most constant length from $\gcone_{\gamma}(u)$ so that $\eta_1,\eta_2\in\latt=\latt_{n_0}$ (they are still in $\gcone_{\gamma}(u)\subseteq \rcone_{n_0}$).  

    On the last step we change $\eta_1$ and $\eta_2$ again by adding $w_0=w_0(K_{n_0})$ from Lemma~\ref{dense} so that $\eta_1,\eta_2\in \cone_{n_0}\subseteq\cone$. Other conditions are satisfied by construction, since $1\le |\xi_1-\xi_2|$.
\end{proof}

\begin{theorem}\label{double_amalgam}
If $\rk(\Lambda(H))<\infty$ and a group $G$ is flexibly $\mathu$-stable, then the group double $G*_HG$ is flexibly $\mathu$-stable.
\end{theorem}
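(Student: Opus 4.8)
The plan is to deduce the statement from Corollary~\ref{cor_flex} together with Proposition~\ref{st_and_st_ep_give_st}, with all the genuine work already packaged in Proposition~\ref{double_prop}. Since we are dealing with a group double, $G_1=G_2=G$ and $i_1=i_2=i\colon H\to G$, so the two subsemigroups appearing in Corollary~\ref{cor_flex} coincide: writing $\cone:=i^*(\rp(G))$, we have $i_1^*(\rp(G_1))=i_2^*(\rp(G_2))=\cone$. Restricting a genuine representation or action of $G$ to $H$ produces a genuine representation or action, so $\cone\subseteq\rp(H)$, which after identifying $\Irr(H)$ with the standard basis of $\Lambda(H)$ becomes $\cone\subseteq\mathbb{Z}_{\ge0}^N$ with $N=\rk(\Lambda(H))<\infty$. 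Moreover $\cone$ is a cone in the sense of the previous subsection, being the subsemigroup of $\mathbb{Z}^N$ generated by $\{i^*(\tau):\tau\in\Irr(G)\}$; this generating set may well be infinite, but Proposition~\ref{double_prop} is stated and proved for arbitrary cones. The hypothesis $\rk(\Lambda(H))<\infty$ is exactly what places us inside a finite-rank lattice $\mathbb{Z}^N$, where the cone machinery is available.

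Next I would check the hypotheses of Corollary~\ref{cor_flex}. Fix $\varepsilon>0$. Given $\xi_1,\xi_2\in\cone$ with $||\xi_1||=||\xi_2||$ and $||\xi_1-\xi_2||\le\delta||\xi_1||$, Proposition~\ref{double_prop} directly supplies $\eta_1,\eta_2\in\cone$ with $\xi_1+\eta_1=\xi_2+\eta_2$ and $||\eta_1||,||\eta_2||\preceq||\xi_1-\xi_2||$. The norm $||\cdot||$ on $\Lambda(H)$ is equivalent to the Euclidean norm on the finite-dimensional space $\Lambda(H)\otimes\mathbb{R}\cong\mathbb{R}^N$, so the conclusion of Proposition~\ref{double_prop} transfers up to the constant hidden in $\preceq$. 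Combining the two estimates and using $||\xi_1||=||\xi_2||=||\xi_j||$ gives $||\eta_j||\preceq\delta||\xi_j||$; choosing $\delta$ so that the implicit constant times $\delta$ is at most $\varepsilon$ yields $||\eta_j||\le\varepsilon||\xi_j||$. This is precisely the condition required by Corollary~\ref{cor_flex}, so the epimorphism $\pi\colon G*G\to G*_HG$ is flexibly $\mathu$-stable.

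Finally I would assemble the pieces. Since $G$ is flexibly $\mathu$-stable, the first part of Proposition~\ref{st_and_st_ep_give_st} shows the free product $G*G$ is flexibly $\mathu$-stable. Having just shown that the epimorphism $\pi\colon G*G\to G*_HG$ is flexibly $\mathu$-stable, the second part of Proposition~\ref{st_and_st_ep_give_st}, namely that flexible stability of the source together with flexible stability of the epimorphism passes to the target, yields that $G*_HG$ is flexibly $\mathu$-stable, as desired.

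At the level of the theorem itself the argument is therefore a short assembly, and I do not expect any serious obstacle in it. The \emph{main difficulty} is already absorbed into the two black boxes: the combinatorial heart is Proposition~\ref{double_prop}, resting on the geometric fact (Lemma~\ref{dense}) that a cone is maximally dense away from its boundary together with the two-dimensional ``push into a common interior subcone'' picture, while the step turning the matching $\xi_1+\eta_1=\xi_2+\eta_2$ into an actual conjugating unitary or permutation is hidden inside Corollary~\ref{cor_flex} via the conjugator argument of Theorem~\ref{main_flex}. The only point in the present proof that needs care is the bookkeeping of norms, i.e.\ verifying that the bound $||\eta_j||\preceq||\xi_1-\xi_2||\le\delta||\xi_j||$ can be made uniformly $\le\varepsilon||\xi_j||$; this is immediate once the equivalence of norms on $\Lambda(H)\otimes\mathbb{R}$ is invoked.
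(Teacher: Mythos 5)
Your proposal is correct and follows essentially the same route as the paper: identify $\cone=i^*(\rp(G))\subseteq\rp(H)\cong\mathbb{Z}_{\ge0}^N$, feed Proposition~\ref{double_prop} into Corollary~\ref{cor_flex} using equivalence of norms on the finite-rank lattice $\Lambda(H)$, and conclude via Proposition~\ref{st_and_st_ep_give_st}. The only difference is presentational: you spell out the $\varepsilon$--$\delta$ bookkeeping and the final assembly step that the paper leaves implicit.
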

\begin{proof}
We consider the cone $\cone=i^*(\rp(G))\subseteq \rp(H)$. 
Proposition~\ref{double_prop} for the cone $\cone$ proves exactly the conditions from Corollary~\ref{cor_flex} for the Euclidean norm. But all norms on $\Lambda(H)$ are equivalent, so we finished the proof. 
\end{proof}

Note that the condition $\rk(\Lambda(H))<\infty$ holds for finite groups. For $\mathu_{HS}$ it is equivalent to having finitely many conjugacy classes and for $\mathu_P$ it is equivalent to having finitely many finite index subgroups. 
Each of these conditions holds for some infinite groups as well. 

\begin{remark}\label{remark_1}
We note that in general there is no hope that a group double $G*_H G$ of a stable group $G$ over an infinite stable group $H$ will be stable (or in some cases even very flexibly stable). For example, $F_2\times F_2$ can be written as a group double $(F_2\times \mathbb{Z})*_{F_2} (F_2\times \mathbb{Z})$ and known to be not $HS$-stable $($see \cite{ioana2021almost}$)$, but $F_2\times \mathbb{Z}$ is $HS$-stable $($see \cite{ioana2019ii1}$)$. 
For $\mathu_P$ one can consider $F_2\times\mathbb{Z}=\mathbb{Z}^2*_\mathbb{Z}\mathbb{Z}^2$, that is not even very flexibly $P$-stable by \cite{ioana2020stability}, but $\mathbb{Z}^2$ is $P$-stable by \cite{arzhantseva2015almost}.
\end{remark}

\begin{remark}
We also note that a group double of a stable group over a finite index subgroup is also not necessarily stable (or even flexibly stable).
Let $G$ be a virtually free group (non-abelian), let $H$ be a subgroup of $G$ with $3\le[G:H]<\infty$, then the group double of $G$ over $H$ is not very flexibly stable in permutations.  
Indeed, by Corollary 1.6 from \cite{benakli2001note} every such double is virtually $F_{r_1}\times F_{r_2}$ for $r_1,r_2\ge 2$ which is not very flexibly stable by \cite{ioana2020stability}. Lemma 3.3 from \cite{ioana2020stability} tells that very flexible $P$-stability of a group implies very flexible stability of all its subgroups of finite index. Hence, the double $G*_HG$ is not very flexibly $P$-stable.
\end{remark}

\begin{exmp}
    One can take his favorite stable group with a finite subgroup and consider its double. For example, let $G$ be the Grigorchuk's group $($it is $P$-stable by \cite{zheng2019rigid}$)$, then the double $G*_{\mathbb{Z}_2}G$ is flexibly $P$-stable.
\end{exmp}

\section{Stability and properties of $i^*$}

\subsection{Properties of $i^*$}

For $i\colon H\to G$ there is a natural homomorphism $i^*\colon \rp(G) \to \rp(H)$. When $H$ is finite, properties of $i^*$ play an important role in the proofs of flexible stability as we will see later. We start with the following proposition.

\begin{prop}\label{about_A0}
  Let $H$ be a finite group and let $i\colon H \to G$ be an inclusion. Then for $i^*\colon \rp(G)\to \rp(H)$ and $\tau\in \Irr(G)$ the following properties hold:  
\begin{enumerate}
    \item If $H$ is central in $G$, then $i^*(\tau)=k\sigma$ for some $k\in\mathbb{Z}_{>0}$, $\sigma\in\Irr(H)$. 
    
    \item If $H$ is normal in $G$, then $i^*(\tau)=k\sum\limits_{\sigma' \in G\cdot \sigma}\sigma'$ for some $k\in\mathbb{Z}_{>0}$, $\sigma\in\Irr(H)$.  

    \item If $[G:H]<\infty$ and $i^*(\tau)=\sum_{j}\sigma_j$, where $\sigma_j\in\Irr(H)$, then $ ||\tau||\le [G:H]\cdot ||\sigma_j||$ for all $j$. In particular, the number of summands in the decomposition of $i^*(\tau)$ above is not greater than $[G:H]$.  
    
    \item Moreover, if $[G:H]<\infty$ and $i^*(\tau)=\sum_{j
    }\sigma_j$ for some $\sigma_j\in\Irr(H)$, then
    $$\frac{ ||\sigma_j||}{||\tau||}=\frac{k_j}{n}$$
    for some $k_j,n \in \mathbb{Z}_{>0}$, $n\le [G:H]^{[G:H]}$.
    
    \item If $H$ is almost normal in $G$, then there exists a finite set $\aleph=\aleph(i)$ such that $i^*(\tau)=k a$ for some $k\in\mathbb{Z}_{>0}$ and $a\in \aleph$. 
\end{enumerate}
\end{prop}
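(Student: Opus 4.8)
The plan is to handle the two categories uniformly, reading $i^*(\tau)$ as the decomposition of the restricted representation $\tau|_H$ into $H$-irreducibles for $\mathu_{HS}$, and as the decomposition of the transitive $G$-set $\tau=G/K$ into $H$-orbits for $\mathu_P$, running the representation-theoretic and permutation-theoretic versions of Schur's lemma, Clifford's theorem and Frobenius reciprocity in parallel. Write $d:=[G:H]$, and note that for the finite-index parts $H$ finite together with $d<\infty$ forces $G$ to be finite.

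For (1): if $H$ is central then in $\mathu_{HS}$ each $\tau(h)$ commutes with the irreducible algebra $\tau(\mathbb{C}[G])$ and is scalar by Schur, so $\tau|_H$ is a multiple of a single one-dimensional $\sigma\in\Irr(H)$; in $\mathu_P$, writing $\tau=G/K$, centrality forces every point-stabiliser in $H$ to equal $H\cap K$, so all $H$-orbits are isomorphic to $\sigma=H/(H\cap K)$. For (2) this is exactly Clifford's theorem: since $H\trianglelefteq G$, the group $G$ permutes transitively the $H$-constituents (resp.\ the isomorphism types of the $H$-orbits), all conjugates in $G\cdot\sigma$ occur, with one common multiplicity $k$, giving $i^*(\tau)=k\sum_{\sigma'\in G\cdot\sigma}\sigma'$. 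For (3) I would use Frobenius reciprocity: if $\sigma_j$ occurs in $\tau|_H$ then $\tau$ occurs in $\mathrm{Ind}_H^G\sigma_j$ (in $\mathu_P$ the induced $G$-set $G\times_H\sigma_j$ surjects $G$-equivariantly onto $\tau$), whence $||\tau||\le||\mathrm{Ind}_H^G\sigma_j||=d\,||\sigma_j||$; since $||\tau||=\sum_j||\sigma_j||$ and each summand is then at least $||\tau||/d$, the number of summands is at most $d$.

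The heart of the argument is (4), and here I would pass to the normal core $N:=\bigcap_{g\in G}gHg^{-1}$, which satisfies $N\trianglelefteq G$ and $N\le H$. The action of $G$ on the $d$ cosets $G/H$ embeds $G/N\hookrightarrow\Sym(G/H)$, so $[G:N]$ divides $d!$ and in particular $[G:N]\le d!\le d^{\,d}$. Applying Clifford (resp.\ its permutation analogue) to $N\trianglelefteq G$ shows that every $N$-constituent of $\tau|_N$ is a $G$-conjugate of one fixed $\theta\in\Irr(N)$ (resp.\ every $N$-orbit in $\tau$ has the common size $[N:N\cap K]$); call this common size $||\theta||$. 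Restricting one level further, every $H$-constituent $\sigma_j$ of $\tau|_H$ restricts to $N$ as a sum of these same conjugates, so both $||\tau||$ and each $||\sigma_j||$ are positive integer multiples of $||\theta||$. Finally $\theta$ occurs in $\tau|_N$, so by Frobenius $\tau$ occurs in $\mathrm{Ind}_N^G\theta$ and hence $||\tau||\le[G:N]\,||\theta||$ (in $\mathu_P$ this is the count $||\tau||/||\theta||=[G:NK]\le[G:N]$ of $N$-orbits). Setting $n:=||\tau||/||\theta||$ and $k_j:=||\sigma_j||/||\theta||$ gives $||\sigma_j||/||\tau||=k_j/n$ with $k_j,n\in\mathbb{Z}_{>0}$ and $n\le[G:N]\le d^{\,d}$.

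The step I expect to be the main obstacle is (5), where $H$ is only almost normal, so $[G:H]$ may be infinite and the finite-index counting of (4) is unavailable; now $\Irr(G)$ is typically infinite and the real content is that $i^*(\Irr(G))$ meets only finitely many rays of $\rp(H)$. My plan is to set $M:=N_G(H)$, which has finite index by almost normality and in which $H$ is normal. Part (2) applied to $H\trianglelefteq M$ shows $\rho|_H$ lies on one of the finitely many rays $\aleph_M:=\{\sum_{\sigma'\in M\cdot\sigma}\sigma':\sigma\in\Irr(H)\}$ (finite because $\Irr(H)$ is finite), and part (3) applied to $M\le G$ shows $\tau|_M$ has at most $[G:M]$ constituents, so $i^*(\tau)$ is a sum of at most $[G:M]$ vectors drawn from these finitely many rays. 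The remaining, hardest point is that only finitely many rays arise from such sums; I would control this by restricting once more to the finite normal core $N=\bigcap_g gHg^{-1}\trianglelefteq G$ and using Clifford theory at the pair $N\le H$: the constituents of $\tau|_H$ over a fixed $\theta\in\Irr(N)$ are governed by the irreducible projective representations of the finite group $M_\theta/N$, of which there are finitely many, and the $G$-orbit $G\cdot\theta$ in the finite set $\Irr(N)$ together with this projective type should pin the ray of $i^*(\tau)$ down to finitely many possibilities. Making precise that this finite Clifford data determines the ray—equivalently, bounding the ratios of the multiplicities appearing across the different $M$-blocks—is the technical crux, and the finite set of resulting primitive vectors is the desired $\aleph=\aleph(i)$.
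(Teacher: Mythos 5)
Your parts (1)--(4) are correct. You run the classical category-specific machinery (Schur's lemma, Clifford's theorem, Frobenius reciprocity, in parallel for $\mathu_{HS}$ and $\mathu_P$), whereas the paper argues once, uniformly, via the canonical isotypic decomposition $X=\coprod_\sigma X_\sigma$ and $G$-invariance of unions of isotypic pieces; the two routes are essentially equivalent, and your bound $[G:N]\le [G:H]!$ via the embedding $G/N\hookrightarrow \Sym(G/H)$ is in fact slightly sharper than the paper's $[G:H]^{[G:H]}$ for the normal core. The problem is part (5), and it sits exactly where you flag ``the technical crux'': you have not proved it, and the route you sketch does not work. The normal core $N=\bigcap_g gHg^{-1}$ of a finite almost normal subgroup can be trivial --- e.g.\ $G=\bigl((H\times H)\rtimes\mathbb{Z}_2\bigr)\times\mathbb{Z}$ with the swap action and the almost normal subgroup $H\times 1\times 0$, whose two conjugates intersect trivially --- and then $\Irr(N)$ is a point and Clifford theory over $N$ carries no information whatsoever. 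Moreover, even when $N$ is nontrivial, the inertia quotient you call $M_\theta/N$ is in general \emph{infinite}: the stabilizer of $\theta$ under the $G$-action on the finite set $\Irr(N)$ has finite index in $G$, hence is infinite whenever $G$ is. So ``the irreducible projective representations of the finite group $M_\theta/N$'' is not a finite collection, and nothing in this data pins the ray of $i^*(\tau)$ down to finitely many possibilities.

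The missing idea --- and this is how the paper closes the argument --- is that your own part (4), applied to the finite-index inclusion $M:=N_G(H)\le G$ rather than to $H$, already bounds the block ratios you need. Write $\tau|_M=\sum_{j=1}^m\rho_j$ with $\rho_j\in\Irr(M)$ and $m\le[G:M]$ by (3). By (2), each $\rho_j|_H=k_j\sum_{\theta'\in M\cdot\theta_j}\theta'$ lies on one of the finitely many rays indexed by $M$-orbits on $\Irr(H)$, with $||\rho_j||=k_j\cdot|M\cdot\theta_j|\cdot||\theta_j||$, where $|M\cdot\theta_j|\le|\Irr(H)|$ and $||\theta_j||\le|H|$. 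The ray of $i^*(\tau)$ is determined by the normalized vector
$$\frac{i^*(\tau)}{||\tau||}=\sum_{j=1}^m\frac{||\rho_j||}{||\tau||}\cdot\frac{\rho_j|_H}{||\rho_j||},$$
and part (4) applied to $M\le G$ (finite index!) says each ratio $||\rho_j||/||\tau||$ is a fraction $k/n$ with $n\le[G:M]^{[G:M]}$ and $k\le n$, hence takes only finitely many values; the unit vectors $\rho_j|_H/||\rho_j||$ also take finitely many values, and there are at most $[G:M]$ summands. So $i^*(\tau)/||\tau||$ ranges over a finite set, and rescaling its elements to primitive integer vectors produces the desired finite set $\aleph(i)$. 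No projective Clifford theory, and no descent to the core, is needed.
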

Let us note that (1) also holds when $G$ is generated by $H$ and $C_G(H)$. 
\begin{proof}
    Let us a choose a homomorphism $\varphi\colon G\to \Aut(X)$ with $\varphi^\#=\tau\in \Irr(G)$. For $i^*(\varphi)$ there is a unique decomposition $X=\coprod_\sigma X_\sigma$, $\sigma \in \Irr(H)$. For $g\in C_G(H)$ we have $\varphi(g)X_\sigma = X_\sigma$.
    If $H$ is central in $G$, each $X_\sigma$ is $G$-invariant, hence $X=X_{\sigma}$ for some $\sigma \in \Irr(H)$ and we proved (1).

    The normalizer $N_G(H)$ acts on $H$ by conjugation, this gives an action of $N_G(H)$ on $\Irr(H)$. For $g\in N_G(H)$ we have $\varphi(g)X_\sigma \subseteq X_{g\cdot \sigma}$. Hence, $|X_\sigma|=|X_{g\cdot \sigma}|$ and $||\sigma ||= ||g\cdot\sigma||$. 
    If $H$ is normal, then $\coprod_{\sigma' \in G\cdot \sigma}X_{\sigma'}$ is $G$-invariant for any $\sigma\in \Irr(H)$. Thus, 
    $X=\coprod_{\sigma' \in G\cdot \sigma}X_{\sigma'}$ for some $\sigma \in \Irr(H)$ and we have $i^*(\tau)=k\sum_{\sigma' \in G\cdot \sigma}\sigma'$ for $k=\frac{|X_\sigma|}{||\sigma||}\in\mathbb{Z}_{>0}$. This proves (2). 
    
    The decomposition $i^*(\tau)=\sum_{j}\sigma_j$ corresponds to some decomposition $X=\coprod_{j} X_j$ for $H$-invariant $X_j$. For each $j$ denote by $Y_j\subseteq X$ the minimal subobject containing all $\varphi(g)X_j$ for $g\in \{G/H\}$, where $\{G/H\}\subseteq G$ is a set of representatives for the left cosets $G/H$. Note that $Y_j$ are $G$-invariant.  Hence, $X=Y_j$ for some $j$ and we have $||\tau||=|Y_j|\le [G:H]\cdot |X_j|={[G:H]\cdot ||\sigma_j||}$.
    
    If $[G:H]<\infty$, let us denote by $H_0\trianglelefteq G$ the intersection of all subgroups conjugated to $H$. Note that there are at most $[G:H]$ such subgroups and each has the same index in $G$ as $H$, so $[G:H_0]\le [G:H]^{[G:H]}$. We can apply (2) to the restriction from $G$ to $H_0$ and get the following images of $\tau$ in $\rp(H)$ and $\rp(H_0)$
    
    $$\tau \quad \longmapsto \quad  \sum_{j=1}^m \sigma_j \quad \longmapsto \quad k\sum_{\sigma' \in G\cdot \sigma}\sigma'$$
    for some $\sigma_j\in \Irr(H)$, $\sigma\in\Irr(H_0)$ and $m,k\in \mathbb{Z}_{>0}$. We see that for each $j$ there exists some $k_j\in\mathbb{Z}_{>0}$ such that $||\sigma_j||=k_j\cdot||\sigma||$. Let us denote $n:=k\cdot |G\cdot\sigma|$.  We have $||\tau||=n\cdot||\sigma||$ and by (3) also $n\le[G:H_0]\le[G:H]^{[G:H]}$.
    This finishes the proof of (4). 

    For an almost normal finite subgroup $H\le G$ we can consider a subgroup $H_1$ such that ${H\trianglelefteq H_1\le G}$ and $[G:H_1]<\infty$. The sequence $H\trianglelefteq H_1\le G$ corresponds by (2) to the following sequence of images
    $$\tau \quad \longmapsto\quad  \sum_{j=1}^{m} \sigma_j \quad \longmapsto \quad \sum_{j=1}^m \left(k_j \cdot \sum_{\theta' \in H_1\cdot \theta_j}\theta'\right)=
    \sum_{\sigma\in \Irr(H)}\lambda_\sigma \cdot \sigma$$
    for some $\sigma_j\in \Irr(H_1)$, $\theta_j\in \Irr(H)$ and $k_j,m,\lambda_\sigma \in \mathbb{Z}_{\ge 0}$. Note that $m\le [G:H_1]$ by (3). 
    
    Let us prove that $\frac{\lambda_\sigma}{||\tau||}$ can take only finitely many values. 
    Note that $k_j\cdot |H_1\cdot\theta_j|\cdot ||\theta_j||=||\sigma_j||$. We can write
    $$\frac{\lambda_\sigma}{||\tau||}=
    \sum_{j: \sigma\in H_1\cdot\theta_j}\frac{k_j}{||\tau||} =
    \sum_{j: \sigma\in H_1\cdot\theta_j}\frac{||\sigma_j||}{||\tau||}\cdot \frac{1}{|H_1\cdot\theta_j|\cdot ||\theta_j||}.$$
    By (4), $\frac{||\sigma_j||}{||\tau||}$ takes only finitely many values. We also know that $|H_1\cdot \theta_j|\le |\Irr(H)|$ and $||\theta_j||\le |H|$.
    The last sum above has at most $m\le [G:H_1]$ summands and each summand takes finitely many values, thus $\frac{\lambda_\sigma}{||\tau||}$ takes only finitely many values. Let us denote the set of all possible values of this fraction by $S$. Now we can take $\aleph':=\sum\limits_{\sigma\in\Irr(H)} S\cdot\sigma\subset \mathbb{Q}_{\ge 0}\rp(H)$. We know that for each $\tau \in \Irr(G)$ there exists some $b\in \aleph'$ such that $i^*(\tau)=||\tau|| \cdot b$. To construct $\aleph$ we can take for each $b\in \aleph'$ a vector $a\in \rp(H)$ proportional to $b$ with the greatest common divisor of the coordinates equal to $1$. This completes the proof.  
    
\end{proof}

\subsection{Flexible stability and almost normal subgroups}

\begin{definition}
    We will call a finitely generated cone $\cone$ with a fixed set $A_\cone \subset \mathbb{Z}\cone$ (not necessarily from $\cone$) primitive if $\cone=\bigoplus_{a\in A_\cone} \cone_a$, where $\cone_a:=\cone\cap \mathbb{Z}a\neq \{0\}$. 
\end{definition}

This means that each $\xi\in\cone$ has a unique decomposition
$$\xi=\sum_{a\in A_\cone}\xi_a,$$
where $\xi_a\in\cone_a$.
Note that $A_\cone$ is necessarily linearly independent and generates $\latt$. By the coordinates of an element from $\latt$ we will mean the coordinates with respect to $A_K$.

For a primitive cone $\cone=\bigoplus_{a\in A_\cone}\cone_a$ and any $a\in A_\cone$ we will fix $k_a:=\min\{k\in\mathbb{Z}_{>0}:ka\in \cone\}$ and define $\widehat{\cone}:=\bigoplus_{a\in A_\cone} \mathbb{Z}_{\ge 0}k_aa\subseteq \cone$. 
Note that $\rcone=\mathbb{R}_{\ge 0}\widehat{\cone}$ and $\mathbb{Z}\widehat{\cone}\cap \mathbb{R}_{\ge 0}\widehat{\cone}=\widehat{\cone}$, therefore, one can view $\widehat{\cone}$ as a positive cone in $\rlatt$.

Let $d\colon\latt \to \mathbb{Z}^m$ be some linear map. 
Lemma 5.2 from \cite{lazarovich2021virtually} gives a nice bound for the distance between $\xi\in \rcone$ and some element from $\widehat{\cone}\cap\ker d$. Lemma 5.3 from \cite{lazarovich2021virtually} uses the fact that $||d(\xi)||\succeq 1$ for $\xi\in\widehat{\cone}\setminus\ker d$, which is also true for $\xi\in\cone\setminus \ker d$. This allows us to reformulate this result as following.

\begin{lemma}\label{lem_laz}
   Let $\cone=\bigoplus_{a\in A_\cone} \cone_a$ be a primitive cone and let $d\colon\latt\to\mathbb{Z}^m$ be some linear map. Then for any $\xi\in\cone\setminus \ker d$ there exists $v\in \widehat{\cone}\cap \ker d$ such that 
    $$||\xi-v||\preceq ||d(\xi)||, \quad ||v||\le||\xi||.$$
\end{lemma}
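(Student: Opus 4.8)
The plan is to reduce to the Euclidean norm (all norms on $\rlatt$ are equivalent, so changing norm only affects the implied constants in $\preceq$) and to work in the coordinates given by the simplicial structure of $\widehat{\cone}$. Recall that the generators $b_a:=k_aa$, $a\in A_\cone$, form an $\mathbb{R}$-basis of $\rlatt$ and a $\mathbb{Z}$-basis of $\mathbb{Z}\widehat{\cone}$, so that $\rcone=\{\sum_a t_ab_a:t_a\ge 0\}$ is full-dimensional and $||\cdot||$ is, up to equivalence, the $\ell^1$-norm of these coordinates, in particular monotone on the nonnegative orthant. The overall strategy has two layers: a real approximation inside $\ker d$, and then an integralization step; the constant lost in the integralization is absorbed using the integrality of $d$, which is exactly the point where $\cone$ may replace $\widehat{\cone}$ compared with \cite{lazarovich2021virtually}.

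For the real layer I would let $P:=\rcone\cap(\ker d\otimes\mathbb{R})$ and take $w:=\Pi_P(\xi)$, the Euclidean metric projection of $\xi$ onto the closed convex cone $P$. By the Moreau decomposition $\xi=\Pi_P(\xi)+\Pi_{P^\circ}(\xi)$ with orthogonal summands, so $|w|\le|\xi|$, which will give the norm bound $||v||\le||\xi||$. To bound $|\xi-w|$, note that both $\xi\mapsto\operatorname{dist}(\xi,P)$ and $\xi\mapsto\|d(\xi)\|$ are continuous, homogeneous of degree $1$, and that on $\rcone$ the zero locus of the second is precisely $P$, hence contained in the zero locus of the first. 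A compactness argument on the cross-section $\{\xi\in\rcone:|\xi|=1\}$ (using that these are polyhedral/semialgebraic data) then yields
$$\operatorname{dist}(\xi,P)=|\xi-w|\preceq \|d(\xi)\|.$$
This is the content of Lemma 5.2 of \cite{lazarovich2021virtually}.

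For the integral layer I would round $w$ to a point $v\in\widehat{\cone}\cap\ker d$ at bounded cost: first move $w$ a bounded amount in an interior direction of the cone $P$ to create room away from the boundary, then round to the lattice $\mathbb{Z}\widehat{\cone}\cap\ker d$ (which is cocompact in $\ker d\otimes\mathbb{R}$) and finally correct into $\widehat{\cone}$ as in Lemma~\ref{dense}; rounding the coordinates downward keeps the move bounded while not increasing the $\ell^1$-norm, so $||v||\le||\xi||$ is preserved and $||\xi-v||\le||\xi-w||+C\preceq\|d(\xi)\|+C$. At this point I invoke the integrality of $d$: since $\cone\subseteq\latt$ and $d(\latt)\subseteq\mathbb{Z}^m$, for $\xi\in\cone\setminus\ker d$ the vector $d(\xi)$ is a nonzero integer vector, whence $\|d(\xi)\|\succeq 1$ and the additive constant $C$ is absorbed into $\preceq\|d(\xi)\|$. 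This integrality observation, valid for all of $\cone\setminus\ker d$ rather than only $\widehat{\cone}\setminus\ker d$, is the single modification to Lemma 5.3 of \cite{lazarovich2021virtually} needed to obtain the stated reformulation. I expect the main obstacle to be the integralization step: simultaneously landing in the \emph{integer} kernel $\ker d\cap\mathbb{Z}\widehat{\cone}$, remaining inside $\widehat{\cone}$, moving only $O(1)$, and not increasing the norm; the interior-direction-plus-cocompactness device (together with Lemma~\ref{dense}) is what makes all four requirements compatible.
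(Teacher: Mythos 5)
The top level of your proposal does match the paper: the paper's own proof of this lemma consists precisely of citing Lemmas 5.2 and 5.3 of \cite{lazarovich2021virtually} and observing that the integrality bound $||d(\xi)||\succeq 1$, used there only for $\xi\in\widehat{\cone}\setminus\ker d$, holds verbatim for $\xi\in\cone\setminus\ker d$ because $d(\latt)\subseteq\mathbb{Z}^m$; your final paragraph reproduces exactly that observation. The genuine gap is in your reconstruction of the Lemma 5.2 content, namely in the second conclusion $||v||\le||\xi||$. This inequality is \emph{exact} in the weighted $\ell^1$-norm and must be: in Theorems~\ref{amalgam_stability} and~\ref{hnn_stability} the vector $v$ is padded upward by multiples of the trivial class (which lies in $\ker d$) to reach $||v||=||\xi||$ precisely, as demanded by the hypothesis $||\xi||=||\psi^\#||$ of Proposition~\ref{prop_stable}; a bound up to a multiplicative constant, or up to an additive $O(||d(\xi)||)$, cannot be padded \emph{down} and is useless there. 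Your argument does not deliver exactness. The Moreau decomposition gives $|w|\le|\xi|$ only in the Euclidean norm, and this does not transfer: take $\rcone=\mathbb{R}^2_{\ge 0}$ with unit weights, $d(x,y)=x-2y$, $\xi=(1,0)$; then $P=\mathbb{R}_{\ge0}(2,1)$ and $\Pi_P(\xi)=(4/5,2/5)$, whose Euclidean norm is $2/\sqrt{5}<1$ but whose $\ell^1$-norm is $6/5>1$. Moreover your subsequent corrections move the norm the wrong way: the shift by an interior direction of $P$ adds a vector with nonnegative coordinates and strictly increases $||\cdot||$, and ``rounding the coordinates downward'' is not available, since coordinatewise rounding does not preserve membership in $\ker d$ --- the rounding has to happen inside the lattice $\mathbb{Z}\widehat{\cone}\cap\ker d$, where there is no ``downward''. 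A repair is possible but needs an extra idea: first shrink radially, replacing $w$ by $(1-s)w$ with $s||w||$ equal to $||\xi-w||$ plus a sufficiently large constant (treating the case of bounded $||w||$ separately by taking $v=0$, where integrality absorbs $||\xi||\preceq ||d(\xi)||+1$), so that the $O(1)$ norm increase from shifting and rounding still leaves $||v||\le||\xi||$.

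Two further soft spots in the same step. First, your compactness argument for $\operatorname{dist}(\xi,P)\preceq||d(\xi)||$ does not follow from continuity, homogeneity and containment of zero loci alone (e.g.\ $f(x)=|x_1|^{1/2}|x|^{1/2}$, $g(x)=|x_1|$ satisfy all three yet $f/g$ is unbounded), and ``semialgebraic'' only buys a \L{}ojasiewicz inequality with an unspecified exponent; what is really needed is the polyhedral structure, i.e.\ Hoffman's error bound for the polyhedron $\rcone\cap(\ker d\otimes\mathbb{R})$, which should be invoked explicitly. Second, the rounding must be performed in $V:=\operatorname{span}_{\mathbb{R}}(P)$ using the cocompact lattice $\mathbb{Z}\widehat{\cone}\cap V$, not in $\ker d\otimes\mathbb{R}$: when $P$ spans a proper subspace of $\ker d\otimes\mathbb{R}$ (e.g.\ $\rcone=\mathbb{R}^3_{\ge0}$, $d(x,y,z)=x+y$, so $P$ is the nonnegative $z$-axis inside a two-dimensional kernel), every point of $P$ lies on the boundary of $P$ relative to $\ker d\otimes\mathbb{R}$, so there is no ``room away from the boundary'' to create, and a lattice point of $\mathbb{Z}\widehat{\cone}\cap\ker d$ within bounded distance, such as $(1,-1,l)$, may fail to lie in $\rcone$, hence fail to lie in $\widehat{\cone}$. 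Relatedly, your appeal to Lemma~\ref{dense} at that stage is both unnecessary (since $\mathbb{Z}\widehat{\cone}\cap\rcone=\widehat{\cone}$, as the paper notes) and unsound as stated, because adding $w_0$ need not preserve $\ker d$.
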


\begin{lemma}\label{technical_for_flex}
   Let $\cone=\bigoplus_{a\in A_\cone} \cone_a$ be a primitive cone and let $d\colon\latt\to\mathbb{Z}^m$ be some linear map. Then for any $\xi\in \cone$ there exist $\xi'\in \cone$ and $\eta\in\widehat{\cone}$ such that either $\xi'_a=\xi_a$, or $\xi'_a=0$ for $a\in \cone_A$, and 
    $$||\xi-\xi'||,||\eta||\preceq||d(\xi)||,\quad d(\xi'+\eta)=0.$$
\end{lemma}

\begin{proof}
    If $d(\xi)=0$, we take $\xi':=\xi$ and $\eta:=0$. Otherwise, 
    for each $a\in A_\cone$ when possible we fix some vector $\zeta^a\in \widehat{\cone}\cap\ker d$ such that $(\zeta^a)_a\ne 0$. We take $v\in\widehat{\cone}\cap\ker d$ from Lemma~\ref{lem_laz} and define 
    $\xi'$ by saying that $\xi'_a=0$ if $v_a=0$ and $\xi'_a=\xi_a$ otherwise. Note that $||\xi'-v||\le||\xi-v||$.
    
    We can add to $v$ a sum of vectors $\zeta^a$ of the norm at most $C||\xi-v||$ for some constant $C$ to increase each coordinate enough so that the result will be of the form $\xi'+\eta$ with $\xi'+\eta\ge \xi'+w_0(\widehat{\cone}\cap\ker d)$, where $w_0(\widehat{\cone}\cap\ker d)$ is from Lemma~\ref{dense}. It follows that $\eta\in \widehat{\cone}$.  
\end{proof}

For an amalgamated free product $G_1*_HG_2$ given by inclusions $i_j\colon H\to G_j$ with $i_j(H)$ finite and almost normal in $G_j$, one can consider $\aleph_j:=\aleph(i_j)$ defined in Proposition~\ref{about_A0}.  
We define the maps 
$$p\colon \Lambda(G_1)\oplus\Lambda(G_2)\to \mathbb{Z}\aleph_1\oplus\mathbb{Z}\aleph_2,\quad p(\tau_1,\tau_2)=(k_1\cdot a_1,k_2\cdot a_2),$$
$$d\colon \mathbb{Z}\aleph_1\oplus\mathbb{Z}\aleph_2\to \Lambda(H), \quad 
d(a_1,a_2)= a_1-a_2,$$
where $i_j^*(\tau_j)=k_j a_j$ for $\tau_j\in \Irr(G_j)$ and $a_j\in \aleph_j$. Note that $d\circ p =i_1^*-i_2^*$ and the cone $\cone:=p\left(\rp(G_1)\oplus\rp(G_2)\right)$ is primitive. 

Given $\xi_j\in\rp(G_j)$, one can apply Lemma~\ref{technical_for_flex} to $p(\xi_1,\xi_2)$ and get $\xi'=(\xi_1',\xi_2')$ and $\eta=(\eta_1,\eta_2)$. By construction and the properties of the map $p$ these $\xi_j'$ and $\eta_j$ correspond to the elements from $\rp(G_j)$ that satisfy conditions of Theorem~\ref{main_flex} (the zeroing of the coordinates of $p(\xi_1,\xi_2)$ corresponds to the zeroing of the coordinates of $\xi_j$) and, therefore, give us the following result. 

\begin{theorem}\label{amalgam_flex}
    An amalgamated free product of two flexibly $\mathu$-stable groups over an almost normal finite subgroup is flexibly $\mathu$-stable. 
\end{theorem}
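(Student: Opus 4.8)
The plan is to reduce Theorem~\ref{amalgam_flex} to the already-established machinery, namely Theorem~\ref{main_flex} together with the combinatorial Lemma~\ref{technical_for_flex}. The key observation is that flexible stability of an amalgamated free product $G_1*_HG_2$ follows from verifying the hypothesis of Theorem~\ref{main_flex}: given $\xi_j\in\rp(G_j)$ with $\|\xi_1\|=\|\xi_2\|$ and $\|i_1^*(\xi_1)-i_2^*(\xi_2)\|\le\delta\|\xi_1\|$, one must produce $\xi_j'\le\xi_j$ and small $\eta_j$ with matching restrictions $i_1^*(\xi_1'+\eta_1)=i_2^*(\xi_2'+\eta_2)$. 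Since $G_1$ and $G_2$ are flexibly $\mathu$-stable, Proposition~\ref{st_and_st_ep_give_st} then lets us compose with flexible stability of the free product $G_1*G_2$ (itself flexibly stable) to conclude.

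First I would set up the cone-theoretic translation. Because $i_j(H)$ is finite and almost normal in $G_j$, Proposition~\ref{about_A0}(5) gives finite sets $\aleph_j=\aleph(i_j)$ such that $i_j^*(\tau)=k\cdot a$ for every $\tau\in\Irr(G_j)$, with $a\in\aleph_j$. This is precisely what makes the maps
$$p\colon\Lambda(G_1)\oplus\Lambda(G_2)\to\mathbb{Z}\aleph_1\oplus\mathbb{Z}\aleph_2,\qquad d\colon\mathbb{Z}\aleph_1\oplus\mathbb{Z}\aleph_2\to\Lambda(H)$$
well-defined, and yields $d\circ p=i_1^*-i_2^*$. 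The next step is to verify that the image cone $\cone:=p(\rp(G_1)\oplus\rp(G_2))$ is primitive, i.e.\ decomposes as $\bigoplus_{a}\cone_a$ along the finitely many rays indexed by $\aleph_1\sqcup\aleph_2$; this is where the finiteness of $\aleph_j$ is essential, since each irreducible maps into a single ray $\mathbb{Z}a$.

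With primitivity in hand I would apply Lemma~\ref{technical_for_flex} to the vector $p(\xi_1,\xi_2)\in\cone$ using the linear map $d$. The lemma returns $\xi'=(\xi_1',\xi_2')\in\cone$ and $\eta=(\eta_1,\eta_2)\in\widehat{\cone}$ with $d(\xi'+\eta)=0$ and $\|p(\xi_1,\xi_2)-\xi'\|,\|\eta\|\preceq\|d(p(\xi_1,\xi_2))\|=\|i_1^*(\xi_1)-i_2^*(\xi_2)\|$. The crucial structural feature is that Lemma~\ref{technical_for_flex} only ever zeroes out whole coordinates of $\xi'$ relative to $\xi$ along the ray directions; because $p$ sends each irreducible $\tau_j$ into a single coordinate $a_j$ of the target, zeroing a coordinate of $p(\xi_1,\xi_2)$ corresponds exactly to zeroing (the $a_j$-isotypic part of) the corresponding coordinates of $\xi_j$ in $\rp(G_j)$. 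Hence $\xi'$ pulls back to genuine elements $\xi_j'\in\rp(G_j)$ with $\xi_j'\le\xi_j$, and $\eta$ pulls back to $\eta_j\in\rp(G_j)$, both with norm bounds $\preceq\|i_1^*(\xi_1)-i_2^*(\xi_2)\|\le\delta\|\xi_1\|$. The matching condition $d(\xi'+\eta)=0$ translates verbatim into $i_1^*(\xi_1'+\eta_1)=i_2^*(\xi_2'+\eta_2)$. Thus the hypotheses (\ref{conditions_flex}) of Theorem~\ref{main_flex} hold for $\varepsilon\asymp\delta$, proving flexible stability of the epimorphism $\pi\colon G_1*G_2\to G_1*_HG_2$.

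The main obstacle I anticipate is not any single estimate but the careful bookkeeping of the pullback in the previous paragraph: one must check that the coordinate-zeroing performed abstractly in the primitive cone $\cone$ is faithfully realized by removing $G_j$-invariant subobjects, i.e.\ that $\xi_j'$ remains in $\rp(G_j)$ (nonnegative integer combinations of irreducibles) and genuinely satisfies $\xi_j'\le\xi_j$ rather than merely $p(\xi_j')\le p(\xi_j)$. This hinges on the fact that Proposition~\ref{about_A0}(5) groups the irreducibles of $G_j$ into rays by their $H$-restriction, and that $p$ respects this grouping, so that the primitive decomposition of $\cone$ refines to a decomposition of $\rp(G_j)$. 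Once this correspondence is made precise, the norm bounds and the final invocation of Proposition~\ref{st_and_st_ep_give_st} to pass from the stable epimorphism to flexible stability of $G_1*_HG_2$ itself are routine, since all norms on the finite-rank lattice $\Lambda(H)$ are equivalent.
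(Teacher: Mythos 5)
Your proposal is correct and follows essentially the same route as the paper: the same maps $p$ and $d$ built from $\aleph(i_j)$ via Proposition~\ref{about_A0}(5), the same primitivity observation, the same application of Lemma~\ref{technical_for_flex} to $p(\xi_1,\xi_2)$, the same pullback of the coordinate-zeroing to $\xi_j'\le\xi_j$ in $\rp(G_j)$, and the conclusion via Theorem~\ref{main_flex} and Proposition~\ref{st_and_st_ep_give_st}. In fact you spell out the pullback bookkeeping (that zeroing ray coordinates in the primitive cone is realized by removing $G_j$-invariant subobjects) more explicitly than the paper does.
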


For an HNN extension $G*_{i_1,i_2}$ given by two inclusions $i_1,i_2\colon H\to G$ with $i_j(H)$ finite and almost normal in $G$ one can consider $\aleph_j:=\aleph(i_j)$. We will denote the least common multiple of $b_1,b_2\in\mathbb{Z}$ by $LCM(b_1,b_2)$. For $a_1\in\aleph_1$ and $a_2\in\aleph_2$ we take $l_1=l_1(a_1,a_2):=\frac{LCM(||a_1||,||a_2||)}{||a_1||}$ and $l_2=l_2(a_1,a_2):=\frac{LCM(||a_1||,||a_2||)}{||a_2||}$.  
This allows us to define the maps 
$$p\colon\Lambda(G)\to\mathbb{Z}(\aleph_1\times\aleph_2),\quad 
p(\tau)=\frac{k_1}{l_1}\cdot(a_1,a_2)=\frac{k_2}{l_2}\cdot(a_1,a_2),$$
$$d\colon \mathbb{Z}(\aleph_1\times\aleph_2) \to\Lambda(H),\quad
d((a_1,a_2))=l_1a_1-l_2a_2,$$
where $i_1^*(\tau)=k_1a_1$ and $i_2^*(\tau)=k_2a_2$. Note that $d\circ p=i_1^*-i_2^*$ and the cone $\cone:=p(\rp(G))$ is primitive. 

\begin{theorem}\label{hnn_flex}
HNN extension of a flexibly $\mathu$-stable group $G$ over two almost normal finite subgroups is flexibly $\mathu$-stable.
\end{theorem}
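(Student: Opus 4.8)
The plan is to show that the epimorphism $\pi\colon G*\mathbb{Z}\to G*_{i_1,i_2}$ is flexibly $\mathu$-stable. Since $G$ is flexibly $\mathu$-stable and $\mathbb{Z}$ is $\mathu$-stable, the free product $G*\mathbb{Z}$ is flexibly $\mathu$-stable by Proposition~\ref{st_and_st_ep_give_st}, and then the same proposition upgrades flexible stability of $\pi$ to flexible stability of $G*_{i_1,i_2}$. The argument runs parallel to the amalgamated case, so first I would record the HNN counterpart of Theorem~\ref{main_flex}: if for every $\varepsilon>0$ there is $\delta>0$ so that every $\xi\in\rp(G)$ with $\|i_1^*(\xi)-i_2^*(\xi)\|\le\delta\|\xi\|$ admits $\xi',\eta\in\rp(G)$ with $\xi'\le\xi$, $i_1^*(\xi'+\eta)=i_2^*(\xi'+\eta)$ and $\|\eta\|,\|\xi-\xi'\|\le\varepsilon\|\xi\|$, then $\pi$ is flexibly $\mathu$-stable. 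The difference from the amalgamated setting is that here a single class $\xi=\varphi_G^\#$ produces a single pair $(\xi',\eta)$, which is exactly what primitivity of $\cone=p(\rp(G))$ and the identity $d\circ p=i_1^*-i_2^*$ are set up to provide.

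For the proof of this discrete criterion I would start from $\varphi\colon G*\mathbb{Z}\to\Aut(X)$ with $d_H(i_1^*(\varphi),i_2^*(\varphi)^{\varphi(t)})\le\delta^s$. Since conjugation does not change the class, Lemma~\ref{distances_connection} gives $\|i_1^*(\xi)-i_2^*(\xi)\|\preceq\delta\|\xi\|$ for $\xi=\varphi_G^\#$, so the hypothesis supplies $\xi',\eta$. I would then cut a $G$-invariant $X_1\subseteq X$ with $(\varphi_G|_{X_1})^\#=\xi'$, pick $\alpha\colon G\to\Aut(Y)$ with $\alpha^\#=\eta$, set $Z:=X_1\sqcup Y$ (after adding trivial summands to arrange $|X|\le|Z|\le(1+\varepsilon)|X|$, so that $X\hookrightarrow Z$), and define the genuine $G$-homomorphism $\psi_G:=(\varphi_G|_{X_1})\sqcup\alpha$. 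By construction $i_1^*(\psi_G)^\#=i_1^*(\xi'+\eta)=i_2^*(\xi'+\eta)=i_2^*(\psi_G)^\#$.

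The main obstacle, and the only genuine departure from Theorem~\ref{main_flex}, is producing the stable letter $\psi(t)$ close to $\varphi(t)$ rather than close to $1$: the plain conjugator argument would compare $i_1^*(\psi_G)$ with $i_2^*(\psi_G)$, which need not be close, because only $i_1^*(\varphi)$ and the $\varphi(t)$-twist of $i_2^*(\varphi)$ are. I would resolve this with a twisted conjugator argument. Extend $\varphi(t)$ by the identity to $\hat t:=\varphi(t)\sqcup 1_{Z\ominus X}\in\Aut(Z)$; crucially $\hat t$ preserves $X$, so restriction to $X$ commutes with conjugation by $\hat t$ and hence $(i_2^*(\psi_G)^{\hat t})|_X=(i_2^*(\psi_G)|_X)^{\varphi(t)}$. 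Since $\psi_G|_X$ agrees with $\varphi_G$ on the large invariant block $X_1$, Lemma~\ref{property_restr} and Lemma~\ref{lem_double_restr} give $d_H(i_2^*(\psi_G)|_X,i_2^*(\varphi))\preceq\varepsilon^s$ and $d_H(i_1^*(\psi_G)|_X,i_1^*(\varphi))\preceq\varepsilon^s$; chaining these with bi-invariance of $d$ and the twisted hypothesis $d_H(i_2^*(\varphi)^{\varphi(t)},i_1^*(\varphi))\le\delta^s$ yields $d_H(i_2^*(\psi_G)^{\hat t}|_X,i_1^*(\psi_G)|_X)\preceq\delta^s+\varepsilon^s$, and then $d_H(i_2^*(\psi_G)^{\hat t},i_1^*(\psi_G))\preceq\delta^s+\varepsilon^s$ on all of $Z$ by Lemma~\ref{property_restr} again. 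As the two sides now have the same class, Proposition~\ref{prop_conj} provides $t_0$ with $d(t_0,1)\preceq\delta^s+\varepsilon^s$ and $(i_2^*(\psi_G)^{\hat t})^{t_0}=i_1^*(\psi_G)$; setting $\psi(t):=t_0\hat t$ makes $\psi$ a genuine homomorphism of $G*_{i_1,i_2}$ with $\psi(t)$ within $\delta^s+\varepsilon^s$ of $\varphi(t)$, so $d_{S\sqcup\{t\}}(\varphi,\psi|_X)\preceq\delta^s+\varepsilon^s$.

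Finally, to discharge the discrete criterion I would feed $p(\xi)$ into Lemma~\ref{technical_for_flex} for the primitive cone $\cone=p(\rp(G))$ and the map $d$: it returns $\xi'$ (obtained from $\xi$ by zeroing some coordinates, hence $\xi'\le\xi$) and $\eta\in\widehat{\cone}$ with $\|\xi-\xi'\|,\|\eta\|\preceq\|d(p(\xi))\|=\|i_1^*(\xi)-i_2^*(\xi)\|$ and $d(p(\xi'+\eta))=0$, i.e. $i_1^*(\xi'+\eta)=i_2^*(\xi'+\eta)$. Translating these cone elements back through $p$ into $\rp(G)$, using that zeroing coordinates of $p(\xi)$ corresponds to zeroing coordinates of $\xi$, verifies the hypothesis for $\delta$ proportional to $\varepsilon$, exactly as in Theorem~\ref{amalgam_flex}; equivalence of all norms on $\Lambda(H)$ lets me pass freely between the Euclidean estimates of Lemma~\ref{technical_for_flex} and $\|\cdot\|$, completing the argument.
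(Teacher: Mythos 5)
Your proposal is correct and takes essentially the same route as the paper: the paper likewise applies Lemma~\ref{technical_for_flex} to $p(\varphi_G^\#)$ for the primitive cone $\cone=p(\rp(G))$ with $d\circ p=i_1^*-i_2^*$, builds $\psi_G=\varphi^0\sqcup\varphi^1$ by cutting the $G$-invariant subobject corresponding to $\xi'$ and adjoining a homomorphism of class $\eta$, and then runs the conjugator argument against the twisted homomorphism $i_2^*(\psi)^{(\varphi(t)|_Y)\sqcup 1_Z}$ to produce the stable letter close to $\varphi(t)$. The only differences are organizational: you package the discrete step as an explicit HNN analogue of Theorem~\ref{main_flex}, and you extend $\varphi(t)$ itself by the identity (so that $\hat t$ preserves $X$), whereas the paper extends the restriction $\varphi(t)|_Y$ by the identity --- both yield the same estimates.
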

\begin{proof}
    Given $\varphi\colon G*\mathbb{Z}\to \Aut(X)\in\mathu$ satisfying $d_H(i_1^*(\varphi),i_2^*(\varphi)^{\varphi(t)})\le\delta$, one can apply Lemma~\ref{technical_for_flex} to $p(\varphi_G^\#)$ and get $\xi'$ and $\eta$. We can always add the multiple of the class of the trivial homomorphism to $\eta$ so that $||\xi'||+||\eta||\ge ||\xi||$. We see that $\xi'$ corresponds to some subhomomorphism $\varphi^0\colon G\to \Aut(Y)$ for $Y\subseteq X$ and $\eta$ corresponds to $\varphi^1\colon G\to \Aut(Z)$ for some $Z\in\Cat$. 
    By construction
    $$|X|-|Y|,|Z|\preceq ||i_1^*(\varphi_G^\#)-i_2^*(\varphi_G^\#)||\preceq \delta^\frac{1}{s}\cdot |X|,$$
    hence, for small enough $\delta$ we also have $|X|\preceq |Y|$. 
    Note that $\left(\varphi^{(\varphi(t)|_Y)\sqcup 1_{Y^\perp}}\right)|_Y=(\varphi^0)^{\varphi(t)|_Y}$, where $X=Y\coprod Y^\perp$. 
    We also have $d\left((\varphi(t)|_Y)\sqcup 1_{Y^\perp},\varphi(t)\right)\preceq \left(\frac{|X|-|Y|}{|X|}\right)^s\preceq\delta$ by Lemma~\ref{lem_restr_useful}. 
    
    Applying Lemma~\ref{property_restr} we get 
    $$d_H\left(i_1^*(\varphi^0),i_2^*(\varphi^0)^{\varphi(t)|_Y}\right)\preceq d_H\left(i_1^*(\varphi),i_2^*(\varphi)^{(\varphi(t)|_Y)\sqcup1_{Y^\perp}}\right)\left(\frac{|X|}{|Y|}\right)^s+\left(\frac{|X|-|Y|}{|Y|}\right)^s\preceq$$
    $$\preceq \left(d_H(i_1^*(\varphi),i_2^*(\varphi)^{\varphi(t)})+2d((\varphi(t)|_Y)\sqcup 1_{Y^\perp},\varphi(t))\right) \left(\frac{|X|}{|Y|}\right)^s+\delta\preceq \delta.$$    
    We define $\psi:=\varphi^0\sqcup \varphi^1\colon G\to \Aut(Y\coprod Z)$ and identify $X$ with a subobject of $Y\coprod Z$. 
    Restricting to $Y$ and applying Lemma~\ref{property_restr} and Lemma~\ref{lem_double_restr} we get that $d_S(\psi|_X,\varphi)\preceq\delta$ and also that 
    $$d_H\left(i_1^*(\psi), i_2^*(\psi)^{(\varphi(t)|_Y)\sqcup 1_Z}\right)\preceq \delta.$$
    The last inequality allows us to extend $\psi$ to $G*\mathbb{Z}$ by applying the conjugator argument as in Theorem~\ref{operator_HNN} to $i_1^*(\psi)$ and $i_2^*(\psi)^{(\varphi(t)|_Y)\sqcup 1_Z}$. Again by Lemma~\ref{property_restr} we get 
    $d(\psi(t)|_X,\varphi(t))\preceq \delta.$
    This finishes the proof. 
\end{proof}

\section{Decomposing maps and stability}
\subsection{Main property of decomposing maps} 
Let $K=\bigoplus_{a\in A_K} K_a$ be a primitive cone. We assume that $\mathbb{Z} K$ is equipped with some weighted $l_1$-norm $||\cdot||$ with respect to the generating set $A_K$.
\begin{definition}
\label{def:decomposing_map}
    We will say that a homomorphism $p\colon \Lambda(G)\to \mathbb{Z}K$ is a decomposing map if the following conditions are satisfied:
    \begin{enumerate}
   \item  For any $\tau\in \Irr(G)$ there exists $ a\in A_K$ such that  $p(\tau)\in K_a$ and $||\tau||=||p(\tau)||$.
   \item $p(\rp(G))=K.$
    \item For any homomorphisms $ \varphi_1,\varphi_2\colon G\to \Aut(X)$ we have $||p(\varphi_1^\#) - p(\varphi_2^\#)||\preceq d_S(\varphi_1,\varphi_2)^{\frac{1}{s}}|X|.$
    \end{enumerate}
    
\end{definition}

Note that it follows from condition (1) that $||\eta||=||p(\eta)||$ for any $\eta\in\rp(G)$. 
Also note that condition (1) implies that for any homomorphism $\varphi\colon G\to \Aut(X)$ there is a decomposition $X=\coprod_{a\in A} X_a$, $\varphi=\coprod_{a\in A} \varphi_a$, where for each $\varphi_a\colon G\to \Aut(X_a)$ we have $p(\varphi_a^\#)\in K_a$. Condition (2) is inconsequential, since one can always take $p(\rp(G))$ as $K$ (it is primitive by (1)).   

\begin{prop}\label{prop_stable}
    Let $G$ be a $\mathu$-stable group and let $p\colon\Lambda(G)\to \mathbb{Z}K$ be a decomposing map. 
    Then for any $\varepsilon>0$ there exists $\delta>0$ such that for any homomorphism $\psi\colon G\to \Aut(X)\in\mathu$ and any element $\xi\in p(\rp(G))$ satisfying 
    $$||\xi||=||\psi^\#||, \quad ||\xi-p(\psi^\#)||\le\delta ||\xi||$$ 
    there exists a homomorphism $\psi'\colon G\to \Aut(X)$ that satisfies $$p\left((\psi')^\#\right)=\xi,\quad d_{S}(\psi,\psi')\le\varepsilon.$$
\end{prop}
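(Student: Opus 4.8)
The plan is to reduce the statement to matching ``bin masses'' and then perform surgery on $\psi$, using stability of $G$ only to shave off pieces of large irreducible summands. By condition (1) of a decomposing map, $\psi$ splits as $\psi=\coprod_{a\in A_K}\psi_a$ with $X=\coprod_{a}X_a$ and $p(\psi_a^\#)\in K_a$; write $v:=p(\psi^\#)=\sum_a v_a$ and $\xi=\sum_a\xi_a$ with $v_a,\xi_a\in K_a$. Since $K_a=K\cap\mathbb{Z}a$ has rank one, an element of $K_a$ is determined by its norm, so $\xi_a$ is the unique element of $K_a$ of norm $\|\xi_a\|$; consequently, to force $p((\psi')^\#)=\xi$ it suffices to build an honest homomorphism $\psi'$ close to $\psi$ whose $a$-component has mass exactly $\|\xi_a\|$ for every $a$. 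The hypothesis $\|\xi-v\|\le\delta|X|$ becomes $\sum_a\|v_a-\xi_a\|\le\delta|X|$, so the total mass $M$ to be moved between bins is at most $\tfrac12\delta|X|$.

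First I would dispose of the finitely many ``small'' bins (say those with $|X_a|\le\sqrt{\delta}\,|X|$) by rebuilding them from scratch: replace $\psi_a$ by any honest homomorphism of mass $\|\xi_a\|$ with $p$-image $\xi_a$, which exists by condition (2), $p(\rp(G))=K$. This costs $\preceq(|X_a|/|X|)^s\le\delta^{s/2}$ per bin by Lemma~\ref{property_sums}, hence $\preceq|A_K|\,\delta^{s/2}$ in total, and introduces no error in the bin structure. For each remaining ``large'' over-full bin I would restrict $\psi_a$ to a subspace $Y_a\subseteq X_a$ of the desired (slightly reduced) dimension. This restriction is not $G$-invariant, so $\psi_a|_{Y_a}$ is only an almost-homomorphism; but by Lemma~\ref{lem_almost_hom} its defect on any fixed finite relation set is $\preceq\big((|X_a|-|Y_a|)/|Y_a|\big)^s$, which is small because $|X_a|$ is large (of order at least $\sqrt\delta|X|$) while the removed mass is $\preceq\delta|X|$. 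Stability of the epimorphism $\pi\colon F_S\to G$ then straightens each such $\psi_a|_{Y_a}$ into an honest homomorphism that is $\varepsilon'$-close to it, with $\varepsilon'\to0$ as $\delta\to0$. This use of stability is the whole point: a single large irreducible cannot be trimmed by invariant surgery, and only stability lets us shave it to an arbitrary dimension.

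Assembling the rebuilt small bins, the untouched under-full and equal large bins, and the straightened over-full large bins gives an honest homomorphism $\psi^{\mathrm{big}}$ on a subspace of $X$ of dimension $|X|-M'$ with $M'\preceq\delta|X|+\varepsilon'|X|$. I would then set $\rho:=\xi-p\big((\psi^{\mathrm{big}})^\#\big)$ and realize it by an honest homomorphism $\rho'$ with $p((\rho')^\#)=\rho$ on a space of dimension $\|\rho\|=M'$ (again using $p(\rp(G))=K$), identifying $X$ with the domain of $\psi':=\psi^{\mathrm{big}}\sqcup\rho'$. By construction $p((\psi')^\#)=p((\psi^{\mathrm{big}})^\#)+\rho=\xi$ exactly, and $\psi'$ agrees with $\psi$ off a subspace of dimension $\preceq M'$, so Lemma~\ref{property_sums}, Lemma~\ref{lem_restr_useful} and the straightening estimate give $d_S(\psi',\psi)\preceq\varepsilon'+(M'/|X|)^s\le\varepsilon$ once $\delta$ is small.

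The main obstacle is hidden in the previous paragraph: the honest homomorphisms produced by straightening the non-invariant restrictions need not stay inside their original bins, so $p((\psi^{\mathrm{big}})^\#)$ may ``leak'' a small amount of mass into neighbouring bins, and then $\rho=\xi-p((\psi^{\mathrm{big}})^\#)$ could fail to lie in $K$ (its coordinate in an already-full bin might go negative). I would control this by trimming every bin a little \emph{below} its target, leaving a uniform safety margin of order $\varepsilon'|X|$ in each of the finitely many bins, so that even after the leakage, which by the straightening bound and condition (3) is $\preceq\varepsilon'|X|$ in total, the $a$-component mass of $\psi^{\mathrm{big}}$ stays $\le\|\xi_a\|$ for every $a$ and hence $\rho\in K$. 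Verifying that this margin simultaneously keeps $M'$ of order $\delta|X|+\varepsilon'|X|$ and absorbs all leakage is the delicate bookkeeping step, and it is exactly here that the rank-one structure of the bins $K_a$ and the continuity estimate (3) of the decomposing map are used.
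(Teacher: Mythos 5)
Your overall architecture --- decompose $X=\coprod_a X_a$ into bins, trim the over-full bins, straighten the non-invariant restrictions using stability of $G$, realize the residual via condition (2), and control leakage via condition (3) --- runs parallel to the paper's proof, and your up-front split into small and large bins is a legitimate reorganization of the paper's final two-case estimate of the straightening constants $\kappa_a$. However, there is a genuine gap at the step where you realize the residual: you infer that keeping the $a$-component mass of $\psi^{\mathrm{big}}$ at most $\|\xi_a\|$ for every $a$ forces $\rho=\xi-p\bigl((\psi^{\mathrm{big}})^\#\bigr)\in K$, i.e.\ you treat non-negativity of the coordinates of $\rho$ as sufficient for membership in $K$. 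This is false: $K_a=K\cap\mathbb{Z}a$ is only a rank-one subsemigroup, i.e.\ (after rescaling) a numerical semigroup, and it can have gaps. For instance, if the irreducibles mapping into bin $a$ have $p$-images $3a$ and $5a$, then $K_a=\{0,3,5,6,8,9,10,\dots\}\cdot a$; taking $\xi_a=8a$ and an untouched under-full bin with $v_a=6a$ gives residual coordinate $2a\notin K_a$, with no leakage and no sign problem at all. Then $\rho\notin p(\rp(G))$, so no honest homomorphism $\rho'$ with $p((\rho')^\#)=\rho$ exists and the construction of $\psi'$ collapses. Your safety margin of order $\varepsilon'|X|$ addresses only the sign issue you identified; it does not repair this, because the obstruction is arithmetic, not metric.

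This is exactly what Lemma~\ref{dense} is for in the paper's proof: one fixes once and for all a vector $w_0\in\cone$ with $\latt\cap(w_0+\rcone)\subseteq \cone$ (a conductor for the cone) and trims \emph{every} bin, including the under-full ones, down to $\zeta:=\max(\overline{0},\min(\xi,p(\psi^\#))-w_0)$. Then each coordinate of the residual is either the full coordinate $\xi_a$ (which does lie in $K_a$) or at least $(w_0)_a$; since the residual is also a difference of elements of $\mathbb{Z}K$, lying beyond the conductor places it back in $\cone$. The paper moreover discards the leaked components of the straightened homomorphisms (keeping only $(\gamma^a)_a$), so that the residual is automatically bounded below by $\xi-\zeta$ coordinatewise, and it pays for the discarded part through condition (3) --- that estimate, not the leakage bookkeeping, is the genuinely delicate step. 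Your argument can be repaired along the same lines (make the margin at least $w_0$ in every bin, untouched ones included, and invoke Lemma~\ref{dense} together with the fact that the residual lies in $\mathbb{Z}K$), but as written the inference ``non-negative coordinates $\Rightarrow$ $\rho\in K$'' is the missing idea, and it is precisely the point where the paper needs Lemma~\ref{dense}.
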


Informally speaking, this proposition means that any possible small change of $p(\psi^\#)$ corresponds to some small change of $\psi$.

\begin{proof}
    If $\xi=p(\psi^\#)$, we will take $\psi'=\psi$. Otherwise, we can assume that $1\le||\xi-p(\psi^\#)||\le\delta||\xi||$. 
    
    Let us fix an element $w_0=w_0(K)$ from Lemma~\ref{dense}. Consider the generating set $A_\cone$ for $\latt$ and operations $\min$, $\max$ and $\le$ with respect to it.  Take $\zeta:=\max(\overline{0}, \min(\xi, p(\psi^\#))-w_0)$. Note that $\zeta\le \xi, p(\psi^\#)$ and $||\xi-\zeta||, ||p(\psi^\#)-\zeta||\le \delta||\xi|| + ||w_0||\preceq \delta ||\xi||$. Another important property of $\zeta$ is that for any $\xi'\le\zeta$ the difference $\xi-\xi'$ has each coordinate either equal to the corresponding coordinate of $\xi$, or not less than the corresponding coordinate of $w_0$, so by primitivity of $\cone$ we have $\xi-\xi'\in\cone$.

    Let us consider the decomposition 
    $X=\coprod_{a\in A_K} X_a$.
    For each $a\in A_K$ we will fix a subobject $Y_a\subseteq X_a$ with $|Y_a|=||\zeta_a||$ and consider the restriction $\varphi^a:=\psi|_{Y_a}\colon F_S\to \Aut(Y_a)$. Note that by construction $|X_a|-|Y_a|\preceq \delta|X|$.
    For some $\kappa_a\le 2$ there exist actual homomorphisms $\gamma^a\colon G\to \Aut(Y_a)$ satisfying $d_S(\varphi^a,\gamma^a)\le \kappa_a$. We assume that $\kappa_a$ are the minimal constants with this property. 
    For each $a$ let us consider the components $(\gamma^a)_a\colon G\to \Aut(Z_a)$, where $Z_a\subseteq Y_a$ and take $\alpha:=\coprod_a (\gamma^a)_a$. 
    Denote $Z:=\coprod_aZ_a$ and $Y:=\coprod_a Y_a$.
    Since $p(\alpha^\#)\le \zeta$, there exists some $\beta\colon G\to \Aut\left(Z^{\perp}\right)$ such that $p(\alpha^\#+\beta^\#)=\xi$, and we define $\psi':=\alpha\sqcup\beta$.
    
    Our goal is to estimate $d_S(\psi,\psi')$. Since $d_S(\varphi^a,\gamma^a)\le\kappa_a$, we have by Lemma~\ref{property_sums}
    $$d_S\left(\coprod_a\varphi^a,\coprod_a\gamma^a\right)\preceq \sum_a \kappa_a\cdot\left(\frac{|Y_a|}{|Y|}\right)^s.$$
    Note that $\psi|_Y=\coprod\varphi^a$ (see Remark~\ref{rem:good_restriction}) and $\psi'|_Z=\coprod (\gamma^a)_a$. By Lemma~\ref{lem_double_restr} and Lemma~\ref{property_restr} we have
    $$d_S(\psi|_Z,\psi'|_Z)\preceq d_S\left(\coprod_a\varphi^a|_{Z_a},\coprod_a(\gamma^a)_a\right) +
    \left(\frac{|X|-|Z|}{|Z|}\right)^s
    \preceq  \sum_a\kappa_a\cdot\left(\frac{|Y_a|}{|Z|}\right)^s+
    \left(\frac{|X|-|Z|}{|Z|}\right)^s.$$
    
    The hard part is to estimate $|X|-|Z|$ or, equivalently, to estimate $|X_a|-|Z_a|$ for each $a$. Observe that by Lemma~\ref{property_restr}
    $$d_S\left(\psi|_{X_a},\gamma^a\sqcup 1_{X_a\cap Y_a^\perp}\right)\preceq \kappa_a\left(\frac{|Y_a|}{|X_a|}\right)^s+\left(\frac{|X_a|-|Y_a|}{|X_a|}\right)^s\preceq\kappa_a+\left(\delta\frac{|X|}{|X_a|}\right)^s.$$
    By property (3) of decomposing maps we have
    $$||p((\psi|_{X_a})^\#)-p((\gamma^a\sqcup1_{X_a\cap Y_a^\perp})^\#)||\preceq (\kappa_a)^{\frac{1}{s}} |X_a| +\delta |X|.$$ 
    We know that $||p((\psi|_{X_a})^\#)_a||=|X_a|$ and $||p((\gamma^a\sqcup1_{X_a\cap Y_a^\perp})^\#)_a||$ is either equal to $|Z_a|$, or to $|Z_a|+|X_a|-|Y_a|$. Hence,
    $$|X_a|-|Z_a|\preceq (\kappa_a)^{\frac{1}{s}} |X_a| +\delta |X|.$$
    
    Using Lemma~\ref{property_restr} we conclude
    $$d_S(\psi,\psi')\preceq d_S(\psi|_Z,\psi'|_Z)\cdot\left(\frac{|Z|}{|X|}\right)^s +\left(\frac{|X|-|Z|}{|X|}\right)^s\preceq 
    \sum_a\kappa_a\cdot\left(\frac{|Y_a|}{|X|}\right)^s+\left(\frac{|X|-|Z|}{|X|}\right)^s=$$
    $$=\sum_a\kappa_a\cdot\left(\frac{|Y_a|}{|X|}\right)^s+
    \left(\frac{\sum_a(|X_a|-|Z_a|)}{|X|}\right)^s
    \preceq \delta^s+ \sum_a\kappa_a\cdot\left(\frac{|X_a|}{|X|}\right)^s.$$
    This means that for some $C>0$ we have 
    $$d_S(\psi,\psi')\le C\left( \delta^s+ \sum_a\kappa_a\cdot\left(\frac{|X_a|}{|X|}\right)^s\right).$$
    
    Finally, we are going to estimate $\kappa_a$. Since $G$ is stable, we can fix $\theta>0$ and a finite set $E\subseteq \ker \pi$ (where $\pi\colon F_S\to G$) such that for any homomorphism $\varphi\colon F_S\to \Aut(W)\in\mathu$ satisfying $d_E(\varphi,1_W)\le\theta$
    there exists a homomorphism $\gamma\colon G\to \Aut(W)$ that satisfies $d_S(\varphi,\gamma)\le \frac{\varepsilon}{2C|A_K|}$. 
    By Lemma~\ref{lem_almost_hom} we have $d_E(\varphi^a,1_{Y_a})\le C'(E)\cdot\left(\frac{|X_a|-|Y_a|}{|X_a|}\right)^s$. 
    For $a\in A_K$ we have two possibilities:
    \begin{enumerate}
       \item either $C'\left(\frac{|X_a|-|Y_a|}{|X_a|}\right)^s\le\theta$ and we have $C\kappa_a\cdot\left(\frac{|X_a|}{|X|}\right)^s\le \frac{\varepsilon}{2|A_K|}\cdot 1$,
       \item or $\theta<C'\cdot\left(\frac{|X_a|-|Y_a|}{|X_a|}\right)^s\le C''\left(\frac{\delta|X|}{|X_a|}\right)^s$ and we have $C\kappa_a\cdot\left(\frac{|X_a|}{|X|}\right)^s\le 2C\cdot\left(\frac{|X_a|}{|X|}\right)^s <2C C''\frac{\delta^s}{\theta}$. 
       \end{enumerate}
       Here $C'$ and $C''$ depend on $\varepsilon$. For small enough $\delta$ both inequalities $C\delta^s\le\frac{\varepsilon}{2}$ and $2C C''\frac{\delta^s}{\theta}\le \frac{\varepsilon}{2|A_K|}$ hold, hence
    $$d_S(\psi,\psi')\le \frac{\varepsilon}{2}+|A_K|\cdot \frac{\varepsilon}{2|A_K|}=\varepsilon.$$
\end{proof}

\subsection{Examples of decomposing maps and stability results}

Now let us consider some examples of decomposing maps. 

Let us consider a finite group $H$ and an inclusions $i\colon H\to G$. If $i(H)$ is normal in $G$, then the set $\aleph(i)=:A_K$ can be taken linearly independent. This gives us the decomposing map $p\colon \Lambda(G)\to \mathbb{Z}K=\mathbb{Z}\aleph(i)$, where $K=i^*(\rp(G))$. Indeed, conditions (1) and (2) are clear and condition (3) follows from Lemma~\ref{distances_connection}. 

For an amalgamated free product $G_1*_HG_2$, if $i_j(H)$ are finite and normal in $G_j$, then the map $p\colon \Lambda(G_1)\oplus\Lambda(G_2)\to \mathbb{Z}\aleph_1\oplus\mathbb{Z}\aleph_2$ defined in Section 5.2 is a direct sum of two decomposing maps $p_1$ and $p_2$. 

\begin{theorem}\label{amalgam_stability}
    An amalgamated free product of two $\mathu$-stable groups $G_1$ and $G_2$ over a normal finite subgroup $H$ is $\mathu$-stable.
\end{theorem}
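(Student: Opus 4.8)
The plan is to reduce to a statement about the quotient epimorphism and then run the decomposing-map machinery of Proposition~\ref{prop_stable} together with the conjugator argument. By Proposition~\ref{st_and_st_ep_give_st} the free product $G_1*G_2$ is $\mathu$-stable, so it suffices to show that $\pi\colon G_1*G_2\to G_1*_HG_2$ is $\mathu$-stable, for which I would take $E=E_0$. Given $\varphi\colon G_1*G_2\to\Aut(X)\in\mathu$ with $d_{E_0}(\varphi,1_X)=d_H(i_1^*(\varphi),i_2^*(\varphi))\le\delta$, I set $\xi_j:=\varphi_j^\#\in\rp(G_j)$. Then $||\xi_1||=||\xi_2||=|X|$ (both are restrictions of homomorphisms on the same $X$), and Lemma~\ref{distances_connection} gives $||i_1^*(\xi_1)-i_2^*(\xi_2)||\preceq\delta^{1/s}|X|$. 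Because $i_j(H)$ is normal, the sets $\aleph_j=\aleph(i_j)$ of Proposition~\ref{about_A0} can be taken linearly independent, so (as noted just before the statement) $p=p_1\oplus p_2$ is a genuine direct sum of decomposing maps $p_j\colon\Lambda(G_j)\to\mathbb{Z}\aleph_j$, with $p_j=i_j^*$ at the level of $\rp$.

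The heart of the argument is a purely discrete step, and it is where normality rather than mere almost-normality is essential. I would apply Lemma~\ref{technical_for_flex} to $\xi:=p(\xi_1,\xi_2)$ inside the primitive cone $\cone=p(\rp(G_1)\oplus\rp(G_2))$, with the linear map $d(a_1,a_2)=a_1-a_2$ satisfying $d\circ p=i_1^*-i_2^*$, obtaining $\xi'=(\xi_1',\xi_2')$ and $\eta=(\eta_1,\eta_2)$ with $d(\xi'+\eta)=0$, with each coordinate of $\xi_j'$ either equal to that of $\xi_j$ or zero, and with $||\xi_j-\xi_j'||,||\eta_j||\preceq||d(\xi)||\preceq\delta^{1/s}|X|$. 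Setting $\zeta_j:=\xi_j'+\eta_j$ gives an honest common $H$-type $\zeta_1=\zeta_2$ in $\Lambda(H)$, and since all vectors involved have nonnegative coordinates, $||\zeta_1||=||\zeta_2||$ automatically, so $||\eta_1||-||\xi_1-\xi_1'||=||\eta_2||-||\xi_2-\xi_2'||$. The delicate point, absent from the flexible case, is that for honest (non-flexible) stability I must achieve $||\zeta_j||=|X|$ \emph{exactly}, so that no dimension is created. I would arrange this by driving the common defect above to $0$: enlarging $\eta$ by vectors of $\widehat{\cone}\cap\ker d$ raises $||\eta_1||$ and $||\eta_2||$ by equal amounts (again by linear independence of the $\aleph_j$), while adjusting how many coordinates of $\xi'$ are zeroed changes $||\xi_j-\xi_j'||$; the automatic equality of defects means these adjustments stay synchronised, so one can tune to $||\zeta_j||=|X|$ while keeping $||\zeta_j-i_j^*(\xi_j)||\preceq\delta^{1/s}|X|$. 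This dimension bookkeeping is the main obstacle: in Theorem~\ref{amalgam_flex} one simply adds a multiple of the trivial homomorphism, whereas here every change of $H$-type must be absorbed \emph{within} $X$.

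With the targets $\zeta_j$ in hand the conclusion follows routinely. Since $||\zeta_j||=|X|=||\varphi_j^\#||$ and $||\zeta_j-p_j(\xi_j)||$ is small, Proposition~\ref{prop_stable} applied to $G_j$ produces $\psi_j'\colon G_j\to\Aut(X)$ on the \emph{same} space $X$ with $p_j((\psi_j')^\#)=\zeta_j$ and $d_{S_j}(\psi_j',\varphi_j)\preceq\varepsilon$. Now $i_1^*((\psi_1')^\#)=\zeta_1=\zeta_2=i_2^*((\psi_2')^\#)$, so the two restrictions to $H$ are isomorphic, and they are close because each is close to $i_j^*(\varphi)$ while $d_H(i_1^*(\varphi),i_2^*(\varphi))\le\delta$; hence Proposition~\ref{prop_conj} yields $t\in\Aut(X)$ with $d(t,1_X)\preceq\delta+\varepsilon$ and $(i_2^*(\psi_2'))^t=i_1^*(\psi_1')$. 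Defining $\psi$ by $\psi_1:=\psi_1'$ and $\psi_2:=(\psi_2')^t$ gives $i_1^*(\psi)=i_2^*(\psi)$, so $\psi$ descends to $G_1*_HG_2$, and by bi-invariance and the triangle inequality $d_{S_1\sqcup S_2}(\varphi,\psi)\preceq\delta+\varepsilon$, which can be made arbitrarily small. This proves that $\pi$ is $\mathu$-stable, and hence so is $G_1*_HG_2$.
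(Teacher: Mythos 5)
Your overall architecture coincides with the paper's: reduce to stability of the epimorphism $\pi\colon G_1*G_2\to G_1*_HG_2$ with $E=E_0$, convert the hypothesis via Lemma~\ref{distances_connection} into a bound on $||d(\xi)||$ for $\xi=p(\varphi_1^\#,\varphi_2^\#)$ in the primitive cone $\cone=\cone_1\oplus\cone_2$, produce a common target in $\ker d$ of norm exactly $2|X|$, realize it on the \emph{same} space $X$ via Proposition~\ref{prop_stable}, and finish with the conjugator argument (Proposition~\ref{prop_conj}). The first and the last two steps of your argument are correct and agree with the paper.

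The gap is in the discrete step, precisely at the point you yourself call delicate. You invoke Lemma~\ref{technical_for_flex}, which is the tool built for the \emph{flexible} theorems: it outputs $\xi'$ and $\eta$ with $d(\xi'+\eta)=0$ and $||\xi-\xi'||,\,||\eta||\preceq||d(\xi)||$, but it gives no bound of the form $||\xi'+\eta||\le||\xi||$; indeed its proof deliberately pads $\eta$ upward (by $w_0$ from Lemma~\ref{dense}) to force $\eta\in\widehat{\cone}$, so your common defect $D=||\zeta_j||-|X|$ will typically be strictly positive. When $D>0$ your ``synchronised tuning'' cannot bring the norm back down to exactly $|X|$: inside a cone one can only add, never subtract. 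Adding vectors of $\widehat{\cone}\cap\ker d$ only increases $D$; zeroing further coordinates of $\xi'$ has the wrong granularity (a single coordinate $\xi_{j,a}$ can have norm comparable to $|X|$, so you overshoot) and it destroys the equality $\zeta_1=\zeta_2$ unless you compensate on the other side, which is the same problem over again. Since Proposition~\ref{prop_stable} requires the exact equality $||\xi||=||\psi^\#||$, targets with $D>0$ cannot be fed into it, and your proof does not go through as written. The paper avoids this entirely by invoking Lemma~\ref{lem_laz} instead: it directly supplies $v=(v_1,v_2)\in\widehat{\cone}\cap\ker d$ with $||v||\le||\xi||$ and $||\xi-v||\preceq||d(\xi)||$, and the deficit is then repaired by adding a multiple of $(1,1)=(1_H,1_H)\in\cone\cap\ker d$. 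This always works because only \emph{upward} correction is ever needed, and the parity matches: $||\xi||=2|X|$ and $||v||=||v_1||+||v_2||=2||v_1||$ are both even, since $v\in\ker d$ forces $||v_1||=||v_2||$. Replacing your use of Lemma~\ref{technical_for_flex} by Lemma~\ref{lem_laz} plus this padding repairs your argument and turns it into the paper's proof.
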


\begin{proof}
    Let us consider $\cone_j:=i_j^*(\rp(G_j))$.  
    Any fixed $\delta>0$ and any homomorphism $\varphi\colon G_1*G_2\to \Aut(X)\in\mathu$ satisfying $d_{E_0}(\varphi, 1)\le \delta$ give us the element $\xi=p\left(\varphi_1^\#,\varphi_2^\#\right)\in \cone:=\cone_1\oplus\cone_2$ that satisfies $||d(\xi)||\preceq\delta^s|X|$ (by Lemma~\ref{distances_connection}) for a map $d\colon \latt\to\Lambda(H)$ defined in Section 5.2.
    
    If $\xi\in\ker d$, then we take $\psi:=\varphi$. Otherwise, we take $v=(v_1,v_2)\in\widehat{K}\cap \ker d$ from Lemma~\ref{lem_laz}. Since $(1,1)\in \ker(d)$, we may assume that $||v||=||\xi||$ and  apply Proposition~\ref{prop_stable} to $v_1$ and $v_2$. 
    
    In both cases we get $\psi_j\colon G_j\to \Aut(X)$ such that 
    $$i_1^*(\psi_1^\#)=i_2^*(\psi_2^\#), \quad d_H(i_1^*(\psi_1),i_2^*(\psi_2))\le \varepsilon,$$
    where $\varepsilon>0$ can be taken arbitrary small for small enough $\delta$. The conjugator argument finishes the proof.    
\end{proof}

For an HNN extension $G*_{i_1,i_2}$ consider the map $p\colon\Lambda(G)\to\mathbb{Z}(\aleph_1\times\aleph_2)$ defined in Section 5.2. It satisfies conditions (1) and (3) from Definition~\ref{def:decomposing_map} and gives a decomposing map after changing the codomain.   

\begin{theorem}\label{hnn_stability}
    An HNN extension of a $\mathu$-stable group $G$ over normal finite subgroups $i_1(H)$ and $i_2(H)$ is $\mathu$-stable. 
\end{theorem}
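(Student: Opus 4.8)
The plan is to follow the template of Theorem~\ref{amalgam_stability} verbatim for the ``discrete'' part, using the decomposing map $p\colon\Lambda(G)\to\mathbb{Z}(\aleph_1\times\aleph_2)$ and the map $d$ with $d\circ p=i_1^*-i_2^*$ defined in Section~6.1, and then to absorb the HNN twist into the conjugator exactly as in Theorem~\ref{operator_HNN} (compare the special case Theorem~\ref{double_HNN}). Fix $\varepsilon>0$, take $E=E_0$, and let $\varphi\colon G*\mathbb{Z}\to\Aut(X)\in\mathu$ satisfy $d_{E_0}(\varphi,1)\le\delta$, which by the description of $E_0$ means $d_H(i_1^*(\varphi),i_2^*(\varphi)^{\varphi(t)})\le\delta$. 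Set $\xi:=p(\varphi_G^\#)\in\cone:=p(\rp(G))$.

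The first observation is that the twist is invisible to $p$ and $d$: conjugation by $\varphi(t)$ preserves isomorphism classes, so $(i_2^*(\varphi)^{\varphi(t)})^\#=(i_2^*(\varphi))^\#$ and hence $d(\xi)=(i_1^*(\varphi))^\#-(i_2^*(\varphi)^{\varphi(t)})^\#$. Lemma~\ref{distances_connection} then gives $||d(\xi)||\preceq\delta^{1/s}|X|$, with no reference to $\varphi(t)$. If $\xi\in\ker d$ I would set $\psi_G:=\varphi_G$; otherwise I would apply Lemma~\ref{lem_laz} to get $v\in\widehat{\cone}\cap\ker d$ with $||\xi-v||\preceq||d(\xi)||$ and $||v||\le||\xi||$. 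Since $p(1)\in\cone\cap\ker d$ for the trivial class $1\in\rp(G)$, I may add a suitable multiple of $p(1)$ to $v$ and assume $||v||=||\xi||=|X|$ while keeping $v\in\cone\cap\ker d$ and $||\xi-v||\preceq\delta^{1/s}|X|$. As $\varphi_G$ is a genuine homomorphism and $v$ is now admissible, Proposition~\ref{prop_stable} produces $\psi_G\colon G\to\Aut(X)$ with $p(\psi_G^\#)=v$ and $d_S(\psi_G,\varphi_G)\le\varepsilon'$ for $\varepsilon'$ as small as desired. In particular $d(p(\psi_G^\#))=d(v)=0$, so $i_1^*(\psi_G^\#)=i_2^*(\psi_G^\#)$.

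It remains to build $\psi(t)$. Because $H$ is finite and $S_H=H$, each element of $H$ is a word of bounded $S$-length in $G$, so $d_S(\psi_G,\varphi_G)\le\varepsilon'$ transfers to $d_H(i_j^*(\psi_G),i_j^*(\varphi))\preceq\varepsilon'$ for $j=1,2$; combining this with the hypothesis and bi-invariance of the metric (which kills the conjugation by $\varphi(t)$ in the third term of a triangle inequality) yields $d_H(i_1^*(\psi_G),i_2^*(\psi_G)^{\varphi(t)})\preceq\delta+\varepsilon'$. At the same time $i_1^*(\psi_G^\#)=(i_2^*(\psi_G)^{\varphi(t)})^\#$, so these two homomorphisms of $H$ are isomorphic and close. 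The conjugator argument (Proposition~\ref{prop_conj}), applied as in Theorem~\ref{operator_HNN}, then gives $u\in\Aut(X)$ with $d(u,1_X)\preceq\delta+\varepsilon'$ and $i_2^*(\psi_G)^{u\varphi(t)}=i_1^*(\psi_G)$. Setting $\psi_G$ as above and $\psi(t):=u\varphi(t)$ defines a genuine homomorphism $\psi\colon G*_{i_1,i_2}\to\Aut(X)$, and $d_{S\sqcup\{t\}}(\varphi,\psi)\preceq\delta+\varepsilon'$ since $d(\psi(t),\varphi(t))=d(u,1_X)$. Choosing $\varepsilon'$ and then $\delta$ small enough makes this at most $\varepsilon$.

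I expect the only genuinely new point, compared with Theorem~\ref{amalgam_stability}, to be the bookkeeping around the twist $\varphi(t)$: one must check that it is invisible to $p$ and $d$ (so that the correction step is literally the amalgam argument on $v\in\ker d$), while reappearing harmlessly in the final conjugation, where one sets $\psi(t)=u\varphi(t)$ rather than $\psi(t)=u$. The hard part will be the quantitative propagation in the last paragraph—turning the bound $d_S(\psi_G,\varphi_G)\le\varepsilon'$ into a bound on $d_H(i_1^*(\psi_G),i_2^*(\psi_G)^{\varphi(t)})$ through the finite set $H$—which is routine but is precisely where the constants must be tracked, since the threshold $\delta$ (input to the whole argument), the threshold of Proposition~\ref{prop_stable}, and its output $\varepsilon'$ have to be chosen in the correct order.
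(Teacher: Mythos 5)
Your proposal is correct and takes essentially the same route as the paper's proof: the same decomposing map $p$ and map $d$ from Section 6.1, Lemma~\ref{lem_laz} plus the $(1,1)\in\ker d$ adjustment feeding into Proposition~\ref{prop_stable}, and the conjugator argument with $\psi(t)=u\varphi(t)$ exactly as in Theorem~\ref{operator_HNN}. The bookkeeping you make explicit (the twist $\varphi(t)$ being invisible to $p$ and $d$ since conjugation preserves isomorphism classes, and the propagation of $d_S(\psi_G,\varphi_G)\le\varepsilon'$ to the $H$-restrictions via bi-invariance) is precisely what the paper's terser write-up leaves implicit.
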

\begin{proof}
    Let us fix $\delta>0$. For any homomorphism $\varphi\colon G*\mathbb{Z}\to \Aut(X)\in \mathu$ that satisfies ${d_{H}(i_1^*(\varphi),i_2^*(\varphi)^{\varphi(t)})\le\delta}$ we have $\xi:=p(\varphi_G^\#)\in \cone$ satisfying $||d(\xi)||\preceq \delta^s|X|$. 

    If $\xi\in\ker d$, we take $\psi:=\varphi$. Otherwise, we take $v\in\widehat{K}\cap \ker d$ from Lemma~\ref{lem_laz}. Since $(1,1)\in \ker(d)$, we may assume that $||v||=||\xi||$ and apply Proposition~\ref{prop_stable} to $v$. 
    
    In both cases we get $\psi\colon G\to \Aut(X)$ such that 
    $$i_1^*(\psi^\#)=i_2^*(\psi^\#), \quad d_H\left(i_1^*(\psi),i_2^*(\psi)^{\varphi(t)}\right)\le \varepsilon,$$
    where $\varepsilon>0$ can be taken arbitrary small for small enough $\delta$. And the conjugator argument again finishes the proof.
\end{proof}






\section{Further remarks and observations}

\subsection{Amalgamated free products over ``small'' groups} 

\begin{definition}
    We will say that a group $G$ is $\mathu$-approximable if for any $\varepsilon>0$ and for any finite set $F\subseteq G$ there exists $X\in\Cat$ and a map $\varphi\colon G \to \Aut(X)\in\mathu$ such that
    $$\forall~g,h\in F~~ d(\varphi(g)\varphi(h),\varphi(gh))\le\varepsilon,$$
    $$\forall~e\ne g\in F~~d(\varphi(g),1_X)\ge 2^{1-s}-\varepsilon.$$
\end{definition}
The constant $2^{1-s}$ comes from the fact that there are sufficiently many different elements at this distance in $\Aut(X)$. $\mathu_P$-approximable groups are called sofic and $\mathu_{HS}$-approximable groups are called hyperlinear. At this point there are no known examples of groups that are not $\mathu$-approximated.   

The following theorem is well-known. See Theorem 2 in \cite{glebsky2009almost} for $\mathu_P$. For $\mathu_{HS}$ it follows from Mal'cev's theorem.

\begin{theorem}\label{stable_are_rf}
    If a finitely generated group is $\mathu$-approximable and $\mathu$-stable, then it is residually finite.
\end{theorem}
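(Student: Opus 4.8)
The plan is to use $\mathu$-approximation to manufacture a sequence of asymptotic homomorphisms that keep every nontrivial element uniformly bounded away from the identity, then to straighten these into honest homomorphisms using $\mathu$-stability, and finally to extract from each such homomorphism a genuine finite quotient that separates the chosen element from $1$. Recall that $G$ is finitely generated by default. First I would fix an exhaustion $F_1\subseteq F_2\subseteq\cdots$ of $G$ by finite sets with $\bigcup_n F_n=G$, together with a sequence $\varepsilon_n\to 0$. Applying the definition of $\mathu$-approximation to each pair $(\varepsilon_n,F_n)$ produces maps $\varphi_n\colon G\to\Aut(X_n)\in\mathu$ satisfying $d(\varphi_n(g)\varphi_n(h),\varphi_n(gh))\le\varepsilon_n$ whenever $g,h,gh\in F_n$, and $d(\varphi_n(g),1_{X_n})\ge 2^{1-s}-\varepsilon_n$ for every $g\in F_n\setminus\{1\}$.

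Next I would check that $(\varphi_n)$ is an asymptotic homomorphism in the sense required by the definition of $\mathu$-stability: for any fixed $a,b\in G$ we have $a,b,ab\in F_n$ for all large $n$, so $d(\varphi_n(a)\varphi_n(b),\varphi_n(ab))\le\varepsilon_n\to 0$. Since $G$ is $\mathu$-stable, there exist homomorphisms $\psi_n\colon G\to\Aut(X_n)$ with $d(\varphi_n(a),\psi_n(a))\to 0$ for every $a\in G$. Now I would fix an arbitrary $g\in G\setminus\{1\}$. For all large $n$ we have $g\in F_n$, so the triangle inequality gives
$$d(\psi_n(g),1_{X_n})\ge d(\varphi_n(g),1_{X_n})-d(\varphi_n(g),\psi_n(g))\ge 2^{1-s}-\varepsilon_n-d(\varphi_n(g),\psi_n(g)),$$
and the right-hand side is eventually positive because $2^{1-s}>0$ while both subtracted terms tend to $0$. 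Hence for some $n$ we obtain an honest homomorphism $\psi\colon G\to\Aut(X)\in\mathu$ with $\psi(g)\ne 1_X$.

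Finally I would convert this into a genuine finite quotient separating $g$ from the identity. For $\mathu_P$ this is immediate: $\Aut(X)=\Sym(X)$ is already a finite group, so $\psi(G)$ is a finite quotient of $G$ in which the image of $g$ is nontrivial, and since $g$ was arbitrary this yields residual finiteness. The main obstacle will be the case $\mathu_{HS}$, where the groups $\Aut(X)$ are infinite, so a nontrivial unitary representation does not directly give a finite quotient. Here $\psi(G)$ is a finitely generated subgroup of $\Aut(X)\subseteq GL_{|X|}(\mathbb{C})$, hence a finitely generated linear group, and by Mal'cev's theorem it is residually finite; as $\psi(g)$ is a nontrivial element of $\psi(G)$, there is a finite quotient of $\psi(G)$, and therefore of $G$, in which the image of $g$ survives. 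Letting $g$ range over all nontrivial elements then proves that $G$ is residually finite in both cases.
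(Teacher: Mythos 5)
Your proof is correct and matches the argument the paper intends: the paper itself offers no proof, deferring to Theorem 2 of Glebsky--Rivera for $\mathu_P$ and to Mal'cev's theorem for $\mathu_{HS}$, and your argument (straighten the approximating maps $\varphi_n$ into genuine homomorphisms $\psi_n$ via stability, note that the separation bound $2^{1-s}-\varepsilon_n$ survives the straightening, then use that $\Sym(X)$ is finite for $\mathu_P$ and Mal'cev's theorem for the finitely generated linear group $\psi(G)$ in the case $\mathu_{HS}$) is precisely the standard one behind those citations.
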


\begin{lemma}\label{res_fin_property}
    If $i\colon H\to G$ is an inclusion of a finite group $H$ into a residually finite group $G$, then there exists $v\in i^*(\rp(G))$ such that $v_\sigma\ne 0$ for any $\sigma\in \Irr(H)$.
\end{lemma}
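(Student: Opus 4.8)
The plan is to reduce the statement to the representation theory (for $\mathu_{HS}$) resp.\ Burnside-ring combinatorics (for $\mathu_P$) of a \emph{finite} quotient of $G$, using residual finiteness only to manufacture such a quotient into which $H$ embeds. Since $H$ is finite, $\Irr(H)$ is finite and $\rp(H)=\bigoplus_{\sigma}\mathbb{Z}_{\ge 0}\sigma$ has finitely many coordinates. Because $i^*$ is additive, $i^*(\rp(G))$ is a subsemigroup of $\rp(H)$; so it suffices to produce, for each individual $\sigma\in\Irr(H)$, an element $v^\sigma\in i^*(\rp(G))$ with nonzero $\sigma$-coordinate, and then take $v:=\sum_\sigma v^\sigma$. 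As all coordinates of elements of $\rp(H)$ are nonnegative, this $v$ has every coordinate positive.

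First I would invoke residual finiteness to separate $H$ from the identity all at once. For each nontrivial $h\in H$ choose a finite quotient of $G$ detecting $h$, and intersect the (finitely many) kernels to obtain a finite-index normal subgroup $N\trianglelefteq G$ with $N\cap H=\{1\}$. Writing $Q:=G/N$ and $q\colon G\to G/N$ for the quotient map, the composite $q\circ i\colon H\to Q$ is then injective, so $H$ is realized as a subgroup of the finite group $Q$.

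Next I would use that for an inclusion of \emph{finite} groups $H\le Q$ every generator of $\rp(H)$ occurs in a restriction from $Q$. Concretely, for $\sigma\in\Irr(H)$ the induced object $\mathrm{Ind}_H^Q\sigma$ lies in $\rp(Q)$, and Frobenius reciprocity (for $\mathu_{HS}$) resp.\ the observation that the orbit of the trivial coset in the restricted $Q$-set $\mathrm{Ind}_H^Q(H/K)=Q/K$ is exactly $H/K$ (for $\mathu_P$) shows that $\sigma$ appears with nonzero multiplicity in $\bigl(\mathrm{Ind}_H^Q\sigma\bigr)|_H$. Hence $w:=\sum_{\sigma\in\Irr(H)}\mathrm{Ind}_H^Q\sigma\in\rp(Q)$ restricts along $H\le Q$ to an element with all coordinates positive. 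Pulling back via $q$ gives an additive map $q^*\colon\rp(Q)\to\rp(G)$, $[\rho]\mapsto[\rho\circ q]$, and since $q\circ i$ is precisely the inclusion $H\hookrightarrow Q$ one has $i^*\circ q^*=(q\circ i)^*$, the restriction along $H\le Q$. Therefore $v:=i^*\bigl(q^*(w)\bigr)=(q\circ i)^*(w)$ lies in $i^*(\rp(G))$ and has $v_\sigma\neq 0$ for every $\sigma\in\Irr(H)$, as required.

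The only genuinely nonformal ingredient is the separation step: residual finiteness is exactly what guarantees a single finite quotient of $G$ detecting all of $H$, and without it $G$ might simply lack enough finite-dimensional representations resp.\ finite actions to see every $\sigma$. Once one has passed to the finite group $Q$, everything is classical. I therefore expect no serious obstacle beyond the (routine) bookkeeping that the per-$\sigma$ witnesses combine into a single $v$, which is immediate from $\rp(H)$ being free abelian and $i^*(\rp(G))$ being closed under addition.
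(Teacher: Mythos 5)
Your proof is correct and takes essentially the same route as the paper: both use residual finiteness to produce a finite-index normal subgroup $N\trianglelefteq G$ with $N\cap H=\{1\}$ and then work inside the finite quotient $Q=G/N$, pulling the resulting object back to $G$. The paper realizes every $\sigma\in\Irr(H)$ in the restriction of the regular representation $l^2(Q)$ (for $\mathu_{HS}$) resp.\ of the coset actions on $Q/H_0$ for $H_0\le H$ (for $\mathu_P$), which is just the packaged form of your induction-plus-Frobenius-reciprocity argument.
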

\begin{proof}
    Let us take a finite index normal subgroup $N\triangleleft G$ such that $N\cap i(H)=\{e\}$. One can consider $H$ as a subgroup of $G/N$. 
    For $\mathu_{HS}$ one can take $l^2(G/N)$, its restriction to $H$ contains a copy of $l^2(H)$, therefore the condition is satisfied. 
    For $\mathu_P$ one can consider a disjoint union of actions of $G$ on $(G/N)/H_0$, where $H_0$ goes over subgroups of $H$.  
\end{proof}

Note that this lemma does not imply even that $\mathbb{R}(i^*(\rp(G)))=\mathbb{R}(\Lambda(H))$.

\begin{theorem}\label{2dim} 
Assume that finitely generated groups $G_1$ and $G_2$ are $\mathu$-approximable and $\mathu$-stable. If $H$ is a finite group with $\rk(\Lambda(H))=2$, then for any inclusions $i_1\colon H \to G_1$ and $i_2\colon H \to G_2$ the corresponding amalgamated free product $G_1*_{H}G_2$ is flexibly $\mathu$-stable.
\end{theorem}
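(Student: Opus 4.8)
The plan is to reduce the statement to the purely combinatorial criterion of Corollary~\ref{cor_flex} and then verify that criterion by a two–cone analogue of Proposition~\ref{double_prop}. Since $G_1$ and $G_2$ are stable they are in particular flexibly stable, so by the first part of Proposition~\ref{st_and_st_ep_give_st} the free product $G_1*G_2$ is flexibly $\mathu$-stable; by the second part it then suffices to prove that the epimorphism $\pi\colon G_1*G_2\to G_1*_HG_2$ is flexibly stable. For this I would apply Corollary~\ref{cor_flex} to the cones $\cone_j:=i_j^*(\rp(G_j))\subseteq\rp(H)$. Because $\rk(\Lambda(H))=2$, we have $\Irr(H)=\{\sigma_1,\sigma_2\}$, $\rp(H)=\mathbb{Z}_{\ge0}^2$, $\Lambda(H)=\mathbb{Z}^2$, and the norm is the positive linear functional $L(a,b)=a\,\|\sigma_1\|+b\,\|\sigma_2\|$. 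The hypotheses of Corollary~\ref{cor_flex} then say that $\Delta:=\xi_2-\xi_1$ is small and, crucially, that $\Delta$ lies on the line $\ker L$, since $\|\xi_1\|=\|\xi_2\|$. Thus everything reduces to the following planar statement: given $\xi_j\in\cone_j$ with $\Delta\in\ker L$, find $\eta_j\in\cone_j$ with $\eta_1-\eta_2=\Delta$ and $\|\eta_j\|\preceq\|\Delta\|$ (for $\Delta\neq0$; otherwise take $\eta_j=0$).

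Next I would record the geometry of the two cones. Since $G_j$ is $\mathu$-approximated and stable, Theorem~\ref{stable_are_rf} makes it residually finite, so Lemma~\ref{res_fin_property} provides $v_j\in\cone_j$ with both coordinates strictly positive. Moreover $\cone_j$ always contains the class $e_1$ of the trivial representation (the trivial representation of $G_j$ restricts to that of $H$). Hence each real cone $\mathbb{R}_{\ge0}\cone_j$ is full-dimensional and of the form $[0,\beta_j]$, i.e.\ the angular sector from the edge ray $e_1$ up to some $\beta_j>0$. The common edge $e_1$ is the key point: it forces $\mathbb{R}_{\ge0}\cone_1\cap\mathbb{R}_{\ge0}\cone_2=[0,\min(\beta_1,\beta_2)]$ to be full-dimensional, so there is a common interior lattice direction $u$ and a solid subcone $\gcone_\gamma(u)$ contained in the interior of both real cones.

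For the planar statement I would mimic the proof of Proposition~\ref{double_prop}. Over $\mathbb{R}$ the solution is transparent: set $\eta_2=\rho u$ and $\eta_1=\Delta+\rho u$; for $\rho\succeq\|\Delta\|$ the angle of $\Delta+\rho u$ to $u$ is at most $\gamma$, so both $\eta_1,\eta_2$ lie in $\gcone_\gamma(u)\subseteq\mathbb{R}_{\ge0}\cone_1\cap\mathbb{R}_{\ge0}\cone_2$ with $\|\eta_j\|\preceq\|\Delta\|$, and $\eta_1-\eta_2=\Delta$ automatically. The integralization is where the work lies. Writing $M:=\mathbb{Z}\cone_1\cap\mathbb{Z}\cone_2$, a finite-index, hence cocompact, sublattice of $\mathbb{Z}^2$, I would first choose an integral $\eta_2$ in the correct coset (so that $\eta_2\in\mathbb{Z}\cone_2$ and $\eta_2+\Delta\in\mathbb{Z}\cone_1$; such $\eta_2$ exist because $\Delta\in\mathbb{Z}\cone_1+\mathbb{Z}\cone_2$ and form a coset of $M$) within bounded distance of $\rho u$. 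Then, passing as in Proposition~\ref{double_prop} to finitely generated subcones $\cone_j'\subseteq\cone_j$ with $\mathbb{Z}\cone_j'=\mathbb{Z}\cone_j$ and applying Lemma~\ref{dense} to each, I would add one \emph{common} shift $w_0\in M$, lying far inside $\gcone_\gamma(u)$ and dominating both vectors produced by Lemma~\ref{dense}, so that $\eta_1+w_0\in\cone_1$ and $\eta_2+w_0\in\cone_2$ simultaneously. Replacing $\eta_j$ by $\eta_j+w_0$ keeps the difference equal to $\Delta$ and changes the norms only by a constant, so $\|\eta_j\|\preceq\|\Delta\|$ survives.

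The main obstacle I anticipate is exactly this last coordination. In Proposition~\ref{double_prop} there is a single cone and a single lattice, so one may freely correct $\eta_1$ and $\eta_2$ by different vectors; here the constraint $\eta_1-\eta_2=\Delta$ must be preserved \emph{exactly} while simultaneously forcing $\eta_1\in\cone_1$ and $\eta_2\in\cone_2$, which live in two different integer cones with possibly different lattices. The resolution is to perform all corrections inside the common lattice $M$ and to use a single common shift $w_0$; checking that one $w_0$ works for both finitely generated subcones at once is the delicate bookkeeping step. Once the planar statement is established with $\|\eta_j\|\preceq\|\Delta\|\le\delta\,\|\xi_1\|$, choosing $\delta$ proportional to $\varepsilon$ yields the hypothesis of Corollary~\ref{cor_flex} and completes the proof.
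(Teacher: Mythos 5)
Your proof is correct, but its combinatorial core is genuinely different from the paper's. The common part: both arguments reduce to Corollary~\ref{cor_flex} (you make the appeal to Proposition~\ref{st_and_st_ep_give_st} explicit, the paper leaves it implicit), both obtain residual finiteness from Theorem~\ref{stable_are_rf} and a vector with all coordinates nonzero in each $\cone_j:=i_j^*(\rp(G_j))$ from Lemma~\ref{res_fin_property}, and both exploit that $(1,0)$, the class of the trivial homomorphism, lies in $\cone_1\cap\cone_2$. From there the paper never touches two-dimensional geometry: it projects onto the second coordinate, where $p(\cone_1)$ and $p(\cone_2)$ are nontrivial numerical semigroups (hence automatically finitely generated), solves the matching problem $p(\xi_1)+\eta_1'=p(\xi_2)+\eta_2'$ there with $\|\eta_j'\|\preceq\|p(\xi_1)-p(\xi_2)\|$, lifts each $\eta_j'$ into $\cone_j$ as a bounded sum of fixed lifted generators, and then kills the remaining discrepancy --- which by construction lies on the axis $\mathbb{Z}(1,0)$ --- by adding a suitable multiple of $(1,0)$ to one side; rank $2$ is used to make the projection one-dimensional and the repair direction common to both cones. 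You instead run a two-cone version of Proposition~\ref{double_prop}: rank $2$ enters exactly once, to force $\mathbb{R}_{\ge0}\cone_1\cap\mathbb{R}_{\ge0}\cone_2$ to be full-dimensional (shared edge $(1,0)$ plus full-dimensionality of each sector), after which you solve over $\mathbb{R}$ inside a common subcone $\gcone_{\gamma}(u)$ and restore integrality via the finite-index lattice $M=\mathbb{Z}\cone_1\cap\mathbb{Z}\cone_2$ and one common shift. Your two flagged delicate points do close: the admissible $\eta_2$ form a nonempty coset of $M$ (nonemptiness amounts to $\Delta\in\mathbb{Z}\cone_1+\mathbb{Z}\cone_2$, automatic since $\xi_j\in\mathbb{Z}\cone_j$), and any $w_0\in M$ within the covering radius of $Ru$ for a large constant $R$ satisfies $w_0-w_0(\cone_j')\in\gcone_{\gamma}(u)\subseteq\mathbb{R}_{\ge0}\cone_j'$ for both $j$ simultaneously, so Lemma~\ref{dense} applies to each cone separately while the difference $\eta_1-\eta_2=\Delta$ is preserved exactly. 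The trade-off: the paper's route is shorter and more elementary (no Lemma~\ref{dense}, no covering-radius bookkeeping), while yours isolates the precise role of the rank-$2$ hypothesis and in effect proves the more general statement that $G_1*_HG_2$ is flexibly $\mathu$-stable whenever the real cones $\mathbb{R}_{\ge0}i_1^*(\rp(G_1))$ and $\mathbb{R}_{\ge0}i_2^*(\rp(G_2))$ intersect in a full-dimensional cone --- a condition which rank $2$, residual finiteness and the shared trivial class happen to guarantee.
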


\begin{proof}
    It is sufficient to check that conditions of Corollary~\ref{cor_flex} are satisfied. Given $\xi_1\in \cone_1:=i_1^*(\rp(G_1))$ and $\xi_2\in \cone_2:=i_1^*(\rp(G_1))$, one can consider the projection $p\colon\rp(H)\to\rp(H)$ onto the second coordinate subspace. Note that group $G_1$ and $G_2$ are residually finite by Theorem~\ref{stable_are_rf}, so $p(\cone_1)$ and $p(\cone_2)$ are not trivial by Lemma~\ref{res_fin_property}. One-dimensional cones are always finitely generated. Let us fix finite sets $V_1\subset\cone_1$ and $V_2\subset\cone_2$ such that $p(V_1)$ and $p(V_2)$ generate $p(\cone_1)$ and $p(\cone_2)$ respectively. 

    There exist $\eta_1'\in p(\cone_1)$ and $\eta_2'\in p(\cone_2)$ such that $p(\xi_1)+\eta_1'=p(\xi_2)+\eta_2'$ and $||\eta_j'||\preceq ||p(\xi_1)-p(\xi_2)||\le ||\xi_1-\xi_2||$. We can construct $\eta_j\in\cone_j$ such that $p(\eta_j)=\eta_j'$ as sums of elements from $V_j$. Note that $||\eta_j||\preceq ||\eta_j'||$. The vectors $\xi_1+\eta_1$ and $\xi_2+\eta_2$ have the same second coordinates and $||(\xi_1+\eta_1)-(\xi_2+\eta_2)||\preceq ||\xi_1-\xi_2||$, hence we can add to $\eta_1$ or $\eta_2$ some multiple of $1=(1,0)\in\cone_j$ and get the required $\eta_1$ and $\eta_2$. 
\end{proof} 

\begin{remark}
    Theorem~\ref{2dim} also holds for flexibly stable $G_1$ and $G_2$, since Theorem~\ref{stable_are_rf} does.  
\end{remark}

\begin{remark}
    The list of finite groups $H$ satisfying $\rk(\Lambda(H))= 2$ contains $\mathbb{Z}_2$ for $\mathu_{HS}$ and $\mathbb{Z}_p$ for $\mathu_P$, where $p$ is prime. We note that one can slightly expand the result of Theorem~\ref{2dim} using the multiplicative structure and $\lambda$-operations on $\Lambda(H)$.    
\end{remark}

\begin{exmp}
    Let us take two operator norm stable crystallographic groups $G_1$ and $G_2$ $($see \cite{eilers2020c}$)$ with a subgroup $\mathbb{Z}_2$. It follows from Theorem~\ref{operator_amalgam} the $G_1*_{\mathbb{Z}_2}G_2$ is operator norm stable and from Theorem~\ref{2dim} that it is flexibly $HS$-stable. 
    Note that for amenable groups operator norm stability implies $HS$-stability. It is not known if it is true in general $($see Question 4 in \cite{eilers2020c}$)$.
    If $G_1*_{\mathbb{Z}_2}G_2$ is not $HS$-stable, this will give a negative answer to this question.  
\end{exmp}

\subsection{The homomorphism alteration property} 
Another approach to proving stability of amalgamated free products of stable groups was introduced for $\mathu_P$ in the recent paper \cite{arzhantseva2023constraint} by Arzhantseva and P{\u{a}}unescu. Let us briefly discuss their idea using our terminology.

Given $\varphi\colon G_1*G_2\to \Aut(X)$ for which $d_{S_H}(i_1^*(\varphi),i_2^*(\varphi))$ is close to $0$, one wants to construct a homomorphism $\psi\colon G_1*_HG_2\to \Aut(X)$ that is close to $\varphi$. The idea is to look for $\psi$ that satisfies $\psi_1=\varphi_1$. It is equivalent to looking for $\psi_2\colon G_2\to\Aut(X)$ that is close to $\varphi_2\colon G_2\to\Aut(X)$ and satisfies $i_2^*(\psi_2)=i_1^*(\varphi)$.  

We will use $\varepsilon-\delta$ terminology instead of using sequences of maps and ultraproducts. The following notion is equivalent to being ``$\psi_0$-constraint stable for any $\psi_0$'' as in \cite{arzhantseva2023constraint}. 

\begin{definition}\label{property_of_a_pair}
    We will say that a pair of groups $(G,H)$ with an inclusion $i\colon H\to G$ satisfy (homomorphism) alteration property with respect to $\mathu$ if for any $\varepsilon>0$ there exists $\delta>0$ such that for any homomorphisms $\varphi\colon G\to \Aut(X)\in\mathu$ and $\psi_0\colon H\to \Aut(X)$ satisfying $d_{S_H}(\psi_0, i^*(\varphi))\le \delta$ there exists $\psi\colon G\to \Aut(X)$ such that $d_{S_G}(\psi, \varphi)\le \varepsilon$ and $i^*(\psi)=\psi_0$.
\end{definition}

Note that this property requires so-called (homomorphism) extension property (see \cite{arzhantseva2023constraint}). In our notations extension property can be written as $i^*(\rp(G))=\rp(H)$. Let us mention that if $G$ and $H$ are finite then alteration property for (G,H) is equivalent to extension property. 

\begin{lemma}\label{lem_for_finite_equiv}
    Let $G$ be a finite group and $i\colon H\to G$ be an inclusion. Then $(G,H)$ has alteration property if and only if it has extension property. 
\end{lemma}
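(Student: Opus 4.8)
Write $\cone := i^*(\rp(G)) \subseteq \rp(H)$, so that the extension property reads $\cone = \rp(H)$. Since $\Irr(G)$ and $\Irr(H)$ are finite, $\cone$ is a finitely generated subsemigroup of $\rp(H)$; moreover $i^*$ preserves the grading $||\cdot||$ on $\rp(G)$ (a restriction acts on the same object, so $||i^*(\xi)||=||\xi||$ for $\xi\in\rp(G)$), and $1_H=i^*(1_G)\in\cone$. The plan is to prove the two implications separately, using the dictionary of Lemma~\ref{distances_connection} between $d_H$ and $||\cdot||$ throughout.

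For extension $\Rightarrow$ alteration, fix $\varepsilon$ and a close pair $\varphi\colon G\to\Aut(X)$, $\psi_0\colon H\to\Aut(X)$ with $d_H(\psi_0,i^*(\varphi))\le\delta$. By Lemma~\ref{distances_connection} the classes satisfy $||\psi_0^\#-i^*(\varphi^\#)||\preceq\delta^{1/s}|X|$. I would produce $\xi\in\rp(G)$ with $i^*(\xi)=\psi_0^\#$ and $||\xi-\varphi^\#||\preceq\delta^{1/s}|X|$: writing $\Delta:=\psi_0^\#-i^*(\varphi^\#)$, one first removes from $\varphi$ a number $\preceq||\Delta||$ of irreducible constituents so as to cancel the negative part of $\Delta$ in the restriction (possible because those constituents must occur in $\varphi$), and then restores the restriction to $\psi_0^\#$ exactly by adjoining fixed preimages in $\rp(G)$ of the generators of $\rp(H)$ (which exist since $\cone=\rp(H)$). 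Non-negativity of $\xi$ and the identity $||\xi||=||i^*(\xi)||=||\psi_0^\#||=|X|$ are then automatic. The second part of Lemma~\ref{distances_connection}, applied to the finite group $G$, produces $\psi\colon G\to\Aut(X)$ with $\psi^\#=\xi$ and $d_S(\psi,\varphi)\preceq\delta$; finally $i^*(\psi)$ and $\psi_0$ are isomorphic and $d_H$-close, so the conjugator argument (Proposition~\ref{prop_conj}) gives $t$ close to $1_X$ with $\psi_0=i^*(\psi^t)$, and $\psi^t$ is the desired homomorphism.

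For alteration $\Rightarrow$ extension I would argue by contraposition. Assuming $\cone\subsetneq\rp(H)$, I first locate an adjacent missing element $p=c+\sigma_0\notin\cone$ with $c\in\cone$ and $\sigma_0\in\Irr(H)$, obtained by walking from $0\in\cone$ to any element of $\rp(H)\setminus\cone$ one generator at a time and taking the first step that leaves $\cone$. I then build a sequence $\psi^{(n)}\in\rp(H)\setminus\cone$ together with $w^{(n)}\in\cone$ of equal norm $||\psi^{(n)}||=||w^{(n)}||\to\infty$ and $||\psi^{(n)}-w^{(n)}||$ bounded: scaling $p$ into the interior of $\rcone$ keeps it off $\cone$ when $p\notin\latt$, while scaling along the boundary face through $p$ does so when $p\in\latt$ (here Lemma~\ref{dense} controls which points stay outside $\cone$), and in both cases one pads with copies of $1_H\in\cone$ to equalize the norms. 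Realizing $w^{(n)}$ as $i^*(\varphi^{(n)})^\#$ and $\psi^{(n)}$ as $(\psi_0^{(n)})^\#$ on a common object $X_n$ and invoking the second part of Lemma~\ref{distances_connection} makes $d_H(\psi_0^{(n)},i^*(\varphi^{(n)}))\to0$; but $(\psi_0^{(n)})^\#\notin i^*(\rp(G))$, so no homomorphism $\psi$ with $i^*(\psi)=\psi_0^{(n)}$ exists, contradicting the alteration property.

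The routine direction is extension $\Rightarrow$ alteration, where the only care needed is the bookkeeping of the perturbation bounds and the final conjugation. The main obstacle is the combinatorial heart of the converse: producing, from a single adjacent missing point, sequences of equal norm tending to infinity that stay a bounded distance from $\cone$ yet never enter it. This is exactly where the geometry of finitely generated cones enters, and I expect it to require Lemma~\ref{dense} together with a split according to whether the missing point fails to lie in $\latt$ or lies in $\latt$ but on the boundary of $\rcone$.
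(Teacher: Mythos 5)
Your first direction (extension $\Rightarrow$ alteration) is correct and is essentially the paper's own proof: the paper likewise cancels the excess of $i^*(\varphi^\#)$ over $\psi_0^\#$ by deleting $\preceq\|\Delta\|$ irreducible constituents of $\varphi$ (each of norm at most $|G|$), fills the resulting deficit using the extension property, and finishes with the conjugator argument (Proposition~\ref{prop_conj}); your variant, which performs this surgery at the level of classes and then realizes the corrected class via the second part of Lemma~\ref{distances_connection} applied to the finite group $G$, is the same argument in different packaging. Note, however, that the paper proves \emph{only} this direction: the converse is dispatched in the sentence ``we only need to prove that the extension property implies the alteration property'', i.e.\ it is taken as the content of the preceding remark (imported from \cite{arzhantseva2023constraint}) that alteration requires extension. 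So in your second half you are attempting more than the paper does.

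That second half has a genuine gap. Your case $p\notin\latt$ is fine (padding by the trivial class works). But when $p\in\latt$, ``scaling along the boundary face through $p$'' at best addresses the sub-case $p\notin\rcone$, and even there you owe an argument that the supporting face contains semigroup points of unbounded norm; it says nothing about the remaining sub-case in which $p$ is a \emph{hole}, i.e.\ $p\in\latt\cap\rcone$ but $p\notin i^*(\rp(G))$. In that sub-case your whole strategy can fail, because missing points of large norm need not exist at all. Concretely, for $H$ nontrivial and $R>|H|$ consider
$$C:=\mathbb{Z}_{\ge0}\cdot 1\;\cup\;\{v\in\rp(H):\|v\|\ge R\}.$$
This is a subsemigroup containing $0$ and the trivial class $1$; it is finitely generated (every element of norm at least $2R+|H|$ splits as a sum of two elements of norm at least $R$, since each $\sigma\in\Irr(H)$ has norm at most $|H|$); its associated lattice is all of $\Lambda(H)$ and its real cone is the whole positive orthant, so Lemma~\ref{dense} gives no information beyond what is already true. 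Thus $C$ enjoys every property of $i^*(\rp(G))$ that your argument uses, yet its complement in $\rp(H)$ is finite and nonempty: there are \emph{no} sequences $\psi^{(n)}\notin C$ with $\|\psi^{(n)}\|\to\infty$ staying at bounded distance from $C$. Worse, for such a $C$ the cone-level alteration property actually holds (if the deficit created in the first direction happens to be a hole, delete a further \emph{bounded} number of $G$-irreducible constituents of $\varphi$ until the deficit has norm at least $R$, then fill in), while extension fails. Hence ``alteration $\Rightarrow$ extension'' is simply false for abstract cones satisfying all the hypotheses you invoke, and no argument using only finite generation, $1\in C$, Lemma~\ref{dense}, and norm-preservation can prove it; one must use genuinely group-theoretic properties of the image $i^*(\rp(G))\subseteq\rp(H)$ (for instance, congruence-type descriptions of which classes are restrictions, which rule out finite nonempty complements), or else treat this direction as the paper does, as an assertion about the notion itself taken from \cite{arzhantseva2023constraint}. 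As written, your converse does not establish the lemma.
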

\begin{proof}
    We only need to prove that extension property implies alteration property. Given $\varphi\colon G\to \Aut(X)$ and $\psi_0\colon G\to\Aut(X)$, consider $\xi:=\min(i^*(\varphi^\#),\psi_0^\#)$. 
    
    Note that for any $\eta\in \rp(H)$ and any $\zeta\in\rp(G)$ such that $i^*(\zeta)\ge \eta $ there exists $\zeta'\le\zeta$ such that 
    $$i^*(\zeta')\ge \eta, \quad ||\zeta'||\preceq ||\eta||.$$
    To prove it we start with $\zeta'=0$ and on each step add to it an element of $\Irr(G)$ making $i^*(\zeta')$ ``cover'' more of $\eta$ (formally we increase $||\min(\eta,i^*(\zeta'))||$).

    Now we can apply this fact to $\eta=i^*(\varphi^\#)-\xi$ and $\zeta=\varphi^\#$ and get $\zeta'$. This $\zeta'$ corresponds to some $\varphi(G)$-invariant subobject $Y\subseteq X$. By construction, $i^*(\varphi|_Y^\#)\le \psi_0^\#$, hence by extension property there exists $\alpha\colon G\to \Aut(Y^\perp)$ such that $i^*(\varphi|_Y\sqcup\alpha)^\#=\psi_0^\#$. The conjugator argument and standard inequalities finish the proof. 
\end{proof}

The following proposition uses only the conjugator argument. 

\begin{prop}\label{with_a_property_of_a_pair}
    If a group $G_1$ is $\mathu$-stable and a pair $(G_2,H)$ has alteration property with respect to $\mathu$, then $G_1*_HG_2$ is $\mathu$-stable.
\end{prop}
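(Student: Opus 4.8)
The plan is to derive $\mathu$-stability of $G_1*_HG_2$ from the composition principle in Proposition~\ref{st_and_st_ep_give_st}. Concretely, I would show that the canonical epimorphism $\pi\colon G_1*G_2\to G_1*_HG_2$ is $\mathu$-stable; since $G_1*G_2$ is itself $\mathu$-stable as a free product of the stable group $G_1$ with $G_2$ (stability of $G_2$ being a consequence of the alteration property, discussed at the end), the second part of Proposition~\ref{st_and_st_ep_give_st} then delivers $\mathu$-stability of $G_1*_HG_2$. So I would fix $\varepsilon>0$ and take the finite set $E:=E_0\subseteq\ker\pi$; the threshold $\delta$ is chosen small at the end.

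For the stability of $\pi$, recall that a genuine homomorphism $\varphi\colon G_1*G_2\to\Aut(X)$ with $d_{E_0}(\varphi,1_X)\le\delta$ is exactly one whose factors satisfy $d_H(i_1^*(\varphi),i_2^*(\varphi))\le\delta$, and that $\varphi_1,\varphi_2$ are honest homomorphisms of $G_1,G_2$. Following the idea of Section~7.2, I would leave the first factor fixed, setting $\psi_1:=\varphi_1$, and only repair the second. For this I would feed the honest homomorphism $\varphi_2$ and the constraint $\psi_0:=i_1^*(\varphi)$ into the alteration property of $(G_2,H)$ (Definition~\ref{property_of_a_pair}): as $d_H(\psi_0,i_2^*(\varphi_2))=d_H(i_1^*(\varphi),i_2^*(\varphi))\le\delta$, for $\delta$ small this yields an honest $\psi_2\colon G_2\to\Aut(X)$ with $d_{S_2}(\psi_2,\varphi_2)\le\varepsilon$ whose restriction $i_2^*(\psi_2)$ is isomorphic to $\psi_0=i_1^*(\varphi)$.

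The remaining task is to make the two $H$-restrictions coincide \emph{as maps}, not merely up to isomorphism, and here the conjugator argument is the decisive tool. Since $i_2^*(\psi_2)$ is close to $i_2^*(\varphi_2)$ and hence to $i_1^*(\varphi)$, while $\bigl(i_2^*(\psi_2)\bigr)^\#=\bigl(i_1^*(\varphi)\bigr)^\#$, Proposition~\ref{prop_conj} supplies $t\in\Aut(X)$ with $d(t,1_X)\preceq d_H\bigl(i_2^*(\psi_2),i_1^*(\varphi)\bigr)$ and $\bigl(i_2^*(\psi_2)\bigr)^t=i_1^*(\varphi)$. Replacing $\psi_2$ by $\psi_2^t$ moves $d_{S_2}(\psi_2,\varphi_2)$ by at most $2\,d(t,1_X)$, so it stays small, and now $i_2^*(\psi_2)=i_1^*(\psi_1)$. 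Consequently $\psi:=\psi_1\sqcup\psi_2$ annihilates $E_0$, descends to $G_1*_HG_2$, and satisfies $d_S(\varphi,\psi)\le\varepsilon$ once the constants are tracked.

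The substantive difficulty is not computational: the alteration property only normalizes the \emph{isomorphism type} of the restriction to $H$, so the two factors cannot be glued directly, and it is exactly the conjugator argument of Proposition~\ref{prop_conj} that converts an isomorphism between close restrictions of the finite group $H$ into a small conjugation aligning them on the nose. The one point that must be handled with care in the reduction is stability of $G_2$ itself (needed for $G_1*G_2$ to be stable): this is not an extra hypothesis but is subsumed by the alteration property, which is equivalent to constraint stability and therefore specializes, via a finite constraint on $H$, to ordinary $\mathu$-stability of $G_2$.
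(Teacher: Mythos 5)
Your proposal is correct and follows essentially the route the paper intends: the paper gives no written proof beyond the remark that the proposition ``uses only the conjugator argument'' and the preceding discussion (keep $\psi_1=\varphi_1$, repair $\varphi_2$ so that $i_2^*(\psi_2)=i_1^*(\varphi)$), and your argument --- stability of the epimorphism $\pi\colon G_1*G_2\to G_1*_HG_2$ via the alteration property plus Proposition~\ref{prop_conj}, followed by composition via Proposition~\ref{st_and_st_ep_give_st} --- is a faithful, detailed realization of that sketch. Your reading of the alteration property as only fixing the isomorphism type of the restriction, compensated by the conjugator step that aligns $i_2^*(\psi_2)$ with $i_1^*(\varphi)$ on the nose, is harmless and is exactly where the conjugator argument earns its place.

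The one step that needs more than you give it is the stability of $G_2$. Under the literal Definition~\ref{property_of_a_pair}, whose input $\varphi$ is an \emph{honest} homomorphism of $G_2$ (and which, as printed, even omits the output requirement $i^*(\psi)=\psi_0$, without which it is vacuous: take $\psi=\varphi$), the alteration property says nothing about almost homomorphisms of $G_2$ and therefore cannot imply its stability. Indeed, with that literal reading the proposition itself would be false: for $H=\{e\}$ every pair $(G_2,\{e\})$ has the alteration property, while $G_1*G_2$ is $\mathu$-stable only if $G_2$ is (an almost homomorphism of $G_2$ extends to one of $G_1*G_2$ by sending $S_1$ to the identity, and an honest correction restricts back to $G_2$). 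So your final paragraph is genuinely load-bearing: the argument works only under the reading of the alteration property as constraint stability in the sense of \cite{arzhantseva2023constraint}, where the input is an \emph{almost} homomorphism. Under that reading stability of $G_2$ does follow --- correct the restriction to the finite, hence stable, group $H$ to an honest $\psi_0$, perturb the almost homomorphism so that its restriction to $H$ is exactly $\psi_0$, and apply constraint stability --- but you should spell this derivation out rather than call it a specialization, because the equivalence the paper asserts between Definition~\ref{property_of_a_pair} and constraint stability is not proved there, and for the literal definition it only holds in the presence of stability of $G_2$, which is precisely what is at stake.
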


First examples (for $\mathu_P$) of group pairs $(G,H)$ with an infinite $G$ that have alteration property were given in \cite{arzhantseva2023constraint}. 

\begin{theorem}[Corollary 5.12 in \cite{arzhantseva2023constraint}]
    Let $G$ be an amenable group and $H\le G$ be a finite subgroup. Then $(G,H)$ has alteration property with respect to $\mathu_P$ if the following conditions hold:
    \begin{enumerate}
        \item $(G,H)$ satisfies extension property;
        \item $Sub(G)$ is countable;
        \item every almost normal subgroup of $G$ is profinitely closed. 
    \end{enumerate}
\end{theorem}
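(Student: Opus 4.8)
The plan is to recast the invariant-random-subgroup machinery of Arzhantseva and P\u{a}unescu in the $\varepsilon$--$\delta$ language of Definition~\ref{property_of_a_pair}, and to argue by contradiction through an ultralimit (the notation $\ulim$ is already at hand). So suppose the alteration property fails for some $\varepsilon_0>0$: there are $\varphi_n\colon G\to\Aut(X_n)\in\mathu_P$ and $\psi_{0,n}\colon H\to\Aut(X_n)$ with $d_H(\psi_{0,n},i^*(\varphi_n))\to 0$, yet no $\psi_n$ with $i^*(\psi_n)=\psi_{0,n}$ and $d_S(\psi_n,\varphi_n)\le\varepsilon_0$. Restricting both $H$-actions to the subobject realizing $\min(\psi_{0,n}^\#,i^*(\varphi_n)^\#)$ and applying the conjugator argument (Proposition~\ref{prop_conj}) there, I may conjugate each $\varphi_n$ by an operator whose distance to the identity is $\preceq d_H(\psi_{0,n},i^*(\varphi_n))$, so that $\psi_{0,n}$ and $i^*(\varphi_n)$ literally agree off a set of size $o(|X_n|)$; by Lemma~\ref{distances_connection} the residual mismatch is an element $\psi_{0,n}^\#-i^*(\varphi_n)^\#\in\Lambda(H)$ of norm $o(|X_n|)$. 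Everything now reduces to relocating $o(|X_n|)$ points of $X_n$ so as to repair the $H$-restriction while keeping a genuine $G$-action.

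I would then pass to the limiting dynamics. Associate to $\varphi_n$ the empirical distribution $\mu_n$ of its point stabilizers, a finitely supported invariant random subgroup on the Chabauty space $\mathrm{Sub}(G)$; condition (2) makes this space countable, so the ultralimit $\mu=\ulim\mu_n$ is purely atomic and supported on countably many subgroups of the amenable group $G$. On the level of stabilizers the functor $i^*$ sends a subgroup $K$ to the $H$-set $\coprod_{g\in H\backslash G/K}H/(H\cap gKg^{-1})$, and since $d_H(\psi_{0,n},i^*(\varphi_n))\to0$ the $H$-action carried by $\psi_{0,n}$ has the same limiting $H$-type as $i^*\mu$. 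The extension property (1), namely $i^*(\rp(G))=\rp(H)$, guarantees that every $H$-set---in particular each irreducible constituent of the positive and negative parts of $\psi_{0,n}^\#-i^*(\varphi_n)^\#$---is the $H$-restriction of some genuine $G$-action, so there is no purely algebraic obstruction to the repair.

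The core of the argument is to realize this repair inside the finite sets $X_n$ using only $o(|X_n|)$ points, and this is where amenability and condition (3) enter. Each irreducible $H$-type $H/L$ that must be inserted or deleted is governed by a finite subgroup $L=H\cap gKg^{-1}\le H$, which is commensurated in $G$ (because $H$ is finite) and hence, by condition (3), profinitely closed. Consequently, for each of the finitely many $h\in H\setminus L$ there is a finite-index subgroup of $G$ containing $L$ but not $h$, and intersecting these gives a finite-index $K'\le G$ with $K'\cap H=L$; the finite action on $G/K'$ therefore carries an $H$-suborbit of the prescribed type $H/L$. Since for amenable $G$ a finite action is, up to a boundary of relative size $o(1)$, a disjoint union of ``tiles'' on which $G$ acts like a translation on large F{\o}lner sets, one may perform the insertions and deletions by cutting and regluing along these tile boundaries, disturbing only $o(|X_n|)$ points and matching the multiplicities dictated by the constituents of $\psi_{0,n}^\#-i^*(\varphi_n)^\#$. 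This produces $\psi_n$ with $i^*(\psi_n)^\#=\psi_{0,n}^\#$ and $d_S(\psi_n,\varphi_n)=o(1)$; a final application of Proposition~\ref{prop_conj}, conjugating $\psi_n$ by an operator with distance $\preceq d_H(i^*(\psi_n),\psi_{0,n})$ to the identity, upgrades the equality of isomorphism classes to $i^*(\psi_n)=\psi_{0,n}$, contradicting the choice of the sequences.

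The main obstacle is the size control in the third step. The extension property alone yields a $G$-action realizing a prescribed $H$-restriction, but a priori only on a set of uncontrolled size (the finite-index $K'$ above may have enormous index, so its action on $G/K'$ carries a large amount of ``baggage'' in the other double cosets), so naively substituting whole $G$-orbits could cost a definite fraction of $X_n$. Amenability is precisely what localizes the surgery to an $o(|X_n|)$ boundary rather than whole orbits, and the profinite closedness in condition (3) is what lets that localized surgery be carried out by finite-index subgroups with the correct intersection with the finite group $H$; condition (2) plays the auxiliary role of keeping the limiting IRS atomic, so that the list of suborbit types to be corrected is countable and can be treated one class at a time. I expect the genuinely delicate bookkeeping to lie in matching multiplicities---removing and inserting $H$-suborbits so that the net change in $i^*$ is exactly $\psi_{0,n}^\#-i^*(\varphi_n)^\#$---while verifying that the F{\o}lner boundaries of all the simultaneous local surgeries can be kept disjoint and of total size $o(|X_n|)$.
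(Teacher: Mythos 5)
You should first note that the paper contains no proof of this statement at all: it is imported verbatim as Corollary 5.12 of \cite{arzhantseva2023constraint}, whose argument runs through action traces and co-approximation of the constrained invariant random subgroup, so your attempt has to stand on its own. As a sketch it reproduces the right general landscape (ultralimit, atomic IRS from countability of $\mathrm{Sub}(G)$, reduction of the mismatch to an element of $\Lambda(H)$ of norm $o(|X_n|)$ via Proposition~\ref{prop_conj} and Lemma~\ref{distances_connection}), but the core surgery step has a genuine gap, in two places. First, the claim that a finite action of an amenable group is, up to an $o(1)$ boundary, a disjoint union of large F{\o}lner tiles is false: a finite $G$-set is a union of orbits $G/K$ with $K$ of finite index, and these orbits may all have bounded size (e.g.\ when the action factors through a fixed finite quotient), in which case no large tiles exist anywhere; Ornstein--Weiss-type tiling requires (near) freeness, which is exactly what is unavailable here. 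Second, and more fundamentally, ``cutting and regluing along tile boundaries'' does not by itself yield a homomorphism $\psi_n\colon G\to\Aut(X_n)$: deleting an $H$-suborbit from a $G$-orbit leaves a set carrying no $G$-action, and producing an honest action after the local modification, while moving only $o(|X_n|)$ points, is precisely the content of the theorem rather than a routine consequence of amenability. Your proposal assumes this step instead of proving it.

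Relatedly, your use of hypothesis (3) is mislocated. You invoke profinite closedness only to manufacture one finite-index $K'\le G$ with $K'\cap H=L$, i.e.\ fresh orbits to \emph{insert}; but the hard direction is modifying or removing the \emph{existing} orbits, and in the actual argument conditions (2) and (3) serve to co-approximate the atoms of the limiting (constrained) IRS --- the subgroups arising as ultralimits of point stabilizers, which need not be of finite index --- by finite-index subgroups with the prescribed intersection with $H$, so that the altered limit object can be pushed back to genuine finite actions. Finally, a smaller but real point: Definition~\ref{property_of_a_pair} is not flexible --- $\psi$ must act on the same set $X$ as $\varphi$ --- so your insertions and deletions must balance \emph{exactly} in cardinality, not merely up to $o(|X_n|)$; padding by fixed points changes $i^*(\psi)^\#$ by trivial $H$-orbits and therefore has to be absorbed into the multiplicity bookkeeping, which your sketch does not address.
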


Let us give an example of group pairs that satisfy alteration property with respect to $\mathu_{HS}$.  

\begin{prop}
    Let $G$ be a finite abelian group and $H\le G$ be its subgroup. Then $(G,H)$ has alteration property with respect to $\mathu_{HS}$. 
\end{prop}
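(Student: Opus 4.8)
The plan is to reduce the statement to the extension property and then read it off from Pontryagin duality. By Lemma~\ref{lem_for_finite_equiv}, since $G$ is finite, the alteration property is equivalent to the extension property $i^*(\rp(G)) = \rp(H)$; so I would only need to verify the latter.

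Because $G$ is finite abelian, every irreducible representation is one-dimensional, so $\Irr_{HS}(G)$ is canonically the character group $\widehat{G}$ of homomorphisms $G \to \mathbb{C}^\times$, and likewise $\Irr_{HS}(H) = \widehat{H}$. Under these identifications the restriction map $i^*$ sends a character $\chi \in \widehat{G}$ to $\chi|_H \in \widehat{H}$, i.e.\ $i^*$ restricted to generators is exactly the dual homomorphism $\widehat{G} \to \widehat{H}$ of the inclusion $i\colon H \to G$.

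The key point is that for finite abelian groups the passage to character groups turns injections into surjections: since $i\colon H \to G$ is injective, every character of $H$ extends to a character of $G$, so $\widehat{G} \to \widehat{H}$ is onto. Consequently each generator $\psi \in \Irr(H) = \widehat{H}$ of the free abelian semigroup $\rp(H)$ is of the form $i^*(\chi)$ for a single $\chi \in \Irr(G) \subseteq \rp(G)$, and taking non-negative integer combinations yields $i^*(\rp(G)) = \rp(H)$. This is where abelianness is essential: for nonabelian $G$ the restriction of an irreducible character need not be irreducible, and the clean surjectivity above can break down, so the abelian hypothesis is precisely what makes the extension property transparent.

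Combining this verification with Lemma~\ref{lem_for_finite_equiv} gives the alteration property for $(G,H)$ with respect to $\mathu_{HS}$, completing the argument. I do not expect a genuine obstacle here; the only thing to be careful about is recording the identification $\Irr_{HS}(G) = \widehat{G}$ and noting that surjectivity of restriction on characters suffices because $\rp(H)$ is freely generated by $\Irr(H)$.
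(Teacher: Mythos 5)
Your proposal is correct, and it follows the paper's overall strategy exactly at the crucial step: both you and the paper reduce the alteration property to the extension property via Lemma~\ref{lem_for_finite_equiv}, using finiteness of $G$. Where you diverge is in how the extension property $i^*(\rp(G))=\rp(H)$ is verified. The paper invokes the compatible-decomposition form of the structure theorem: one can choose generators so that $H=\bigoplus_i \mathbb{Z}/m_i\mathbb{Z} \subseteq \bigoplus_i \mathbb{Z}/n_i\mathbb{Z}=G$ with $m_i \mid n_i$, after which characters extend coordinatewise. You instead identify $\Irr_{HS}$ with the character group and use the fact that Pontryagin duality turns the injection $H\hookrightarrow G$ into a surjection $\widehat{G}\to\widehat{H}$ (equivalently, divisibility of $\mathbb{C}^\times$, so every character of $H$ extends); since $i^*$ hits every generator of the free semigroup $\rp(H)$ and clearly maps into it, equality follows. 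Both facts are standard, so this is a matter of taste: your duality argument bypasses the simultaneous-basis statement for subgroups of finite abelian groups (which is slightly more delicate than the plain structure theorem and is what the paper leans on), and it makes explicit why abelianness is what powers the argument; the paper's decomposition is more concrete and constructive. Either way the conclusion and the use of Lemma~\ref{lem_for_finite_equiv} are identical.
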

\begin{proof}
    By Lemma~\ref{lem_for_finite_equiv} it is sufficient to prove extension property. And this property follows from the fact that for appropriate systems of generators groups $G$ and $H$ can be written as
    $$H= \bigoplus_i\mathbb{Z}/m_i\mathbb{Z}\subseteq\bigoplus_i\mathbb{Z}/n_i\mathbb{Z}=G ,$$
    where $m_i$ divides $n_i$ for each $i$.  
\end{proof}

Observe that some of these pairs have no alteration property with respect to $\mathu_P$ (for example, $(\mathbb{Z}/4\mathbb{Z},\mathbb{Z}/2\mathbb{Z})$ has no extension property with respect to $\mathu_P$).

\subsection{Very flexible stability}
In this paper we used the restriction construction since it was crucial in the proof of Proposition~\ref{prop_stable}. But there is also a different approach. One can define the distance between $a_1 \in \Aut(Y_1)$ and $a_2\in \Aut(Y_2)$ over a common subobject $X\subseteq Y_1,Y_2$ for $\mathu_{HS}$ as 
$$d_{X}(a_1,a_2):=||p_1a_1p_1-p_2a_2p_2||_{HS,X},$$
where $p_1$ and $p_2$ are projections onto $X$ from $Y_1$ and $Y_2$.  
For $\mathu_P$ we define it as 
$$d_{X}(a_1,a_2):=\#\{x\in X: a_1(x)\ne a_2(x)\}\cdot\frac{1}{|X|}.$$

This approach gives another reformulation of flexible stability and very flexible stability both for $\mathu_{HS}$ and $\mathu_P$.
One can also slightly modify the conjugator argument (Proposition~\ref{prop_conj}) to make it applicable to this approach.

\begin{prop}\label{new_conjugator}
    For a finite group $H$, any two homomomorphisms $\varphi_j\colon H\to\Aut(Y)$ satisfying $\varphi_1^\#=\varphi_2^\#$ and any $X\subseteq Y$ there exists $t\in\Aut(Y)$ such that
    $$d_X(t,1_X)\preceq d_{X,H}(\varphi_1,\varphi_2),\quad \varphi_2^{t}=\varphi_1.$$
\end{prop}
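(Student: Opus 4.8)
The plan is to follow the structure of Proposition~\ref{prop_conj} and treat $\mathu_P$ and $\mathu_{HS}$ separately. In both cases a conjugator $t\in\Aut(Y)$ with $\varphi_2^t=\varphi_1$ exists for free, since $\varphi_1^\#=\varphi_2^\#$; the entire difficulty is to choose it so that its compression to $X$ is close to $1_X$. Note that the set of all such conjugators is a coset $t_0\cdot C$, where $C$ is the group of unitaries commuting with $\varphi_2(H)$, so we have exactly the freedom of multiplying by an intertwiner of $\varphi_2$, and this freedom must be spent to control $d_X(t,1_X)$.

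For $\mathu_P$ I would repeat the orbit argument of Proposition~\ref{prop_conj} verbatim, now phrased on $Y$. Define $Y_0:=\{y\in Y:\forall h\in H\ \varphi_1(h)y=\varphi_2(h)y\}$. Exactly as before, $Y_0$ is a union of $\varphi_1(H)$-orbits on which $\varphi_1$ and $\varphi_2$ coincide; it is invariant for both actions, and subtracting classes gives $\varphi_1|_{Y\setminus Y_0}^\#=\varphi_2|_{Y\setminus Y_0}^\#$. Hence there is $t_0\in\Aut(Y\setminus Y_0)$ with $(\varphi_2|_{Y\setminus Y_0})^{t_0}=\varphi_1|_{Y\setminus Y_0}$, and I set $t|_{Y_0}=1$, $t|_{Y\setminus Y_0}=t_0$, so that $\varphi_2^t=\varphi_1$ on all of $Y$. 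Since $t$ fixes $Y_0$ pointwise, every point of $X$ moved by $t$ lies in $X\setminus Y_0$, and
$$\#(X\setminus Y_0)\le\sum_{h\in H}\#\{x\in X:\varphi_1(h)x\ne\varphi_2(h)x\}=|X|\sum_{h\in H}d_X(\varphi_1(h),\varphi_2(h))\le|H|\,|X|\,d_{X,H}(\varphi_1,\varphi_2).$$
Dividing by $|X|$ yields $d_X(t,1_X)\le|H|\,d_{X,H}(\varphi_1,\varphi_2)\preceq d_{X,H}(\varphi_1,\varphi_2)$. What makes this case clean is that the Hamming distance $d_X$ counts $\varphi_1(h)x\ne\varphi_2(h)x$ irrespective of whether the images leave $X$, so no two-sided compression ever enters.

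The case $\mathu_{HS}$ is the main obstacle, precisely because $d_{X,H}(\varphi_1,\varphi_2)=\max_{h}||p\varphi_1(h)p-p\varphi_2(h)p||_{HS,X}$ compares the two-sided compressions. One cannot simply copy the averaging argument of Proposition~\ref{matr_prop}: the operator $a=\tfrac1{|H|}\sum_h\varphi_1(h)\varphi_2(h)^{-1}$ is an intertwiner ($a\varphi_2(h)=\varphi_1(h)a$), but when $Y$ is much larger than $X$ its compression $pap$ need not be close to $1_X$ even when $d_{X,H}(\varphi_1,\varphi_2)=0$, so neither $a$ nor the "fix the common part $Y_0$" recipe above produces a conjugator close to $1_X$. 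Instead I would exploit the coset freedom directly: decompose $Y=\bigoplus_{\sigma\in\Irr(H)}Y^{(i)}_\sigma$ into isotypic components for $\varphi_i$ and write $Y^{(i)}_\sigma\cong M^{(i)}_\sigma\otimes V_\sigma$, where $\dim M^{(1)}_\sigma=\dim M^{(2)}_\sigma$ by $\varphi_1^\#=\varphi_2^\#$. Every conjugator has the form $t=\bigoplus_\sigma u_\sigma\otimes 1_{V_\sigma}$ with $u_\sigma\colon M^{(2)}_\sigma\to M^{(1)}_\sigma$ unitary, and since $||ptp-1_X||_{HS,X}^2\preceq 1-\mathrm{Re}\,\Tra(pt)$ for unitary $t$, it suffices to choose the $u_\sigma$ maximizing $\mathrm{Re}\,\Tra(pt)$, which is achieved blockwise by taking each $u_\sigma$ to be the unitary part of the corresponding compression of $p$ (a polar/SVD step governed by Lemma~\ref{close_to_a_unitary} and Lemma~\ref{ineq_HS}).

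The hard and genuinely new estimate is then to show that this optimal value satisfies $1-\mathrm{Re}\,\Tra(pt)\preceq d_{X,H}(\varphi_1,\varphi_2)^2$, i.e. that closeness of the two-sided compressions forces the optimal intertwiner to be almost the identity on $X$; I expect this to follow by feeding the blockwise polar estimates back through Lemma~\ref{ineq_HS} together with the equality-case analysis of Lemma~\ref{close_to_a_unitary}, and it is here that essentially all the work lies. Alternatively, one can route the same conclusion through the flexible Gowers--Hatami machinery already invoked for Proposition~\ref{prop_conj}, tracking the $X$-compressed norm throughout. In either route, once $t$ is produced, $\varphi_2^t=\varphi_1$ holds on all of $Y$ by construction, and the bound $d_X(t,1_X)\preceq d_{X,H}(\varphi_1,\varphi_2)$ is exactly what the optimization delivers.
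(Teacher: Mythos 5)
Your $\mathu_P$ half is complete and correct, and it is precisely the ``slight modification'' of Proposition~\ref{prop_conj} that the paper intends (the paper states Proposition~\ref{new_conjugator} with no written proof, only that remark): since your $t$ fixes $Y_0$ pointwise, every point of $X$ moved by $t$ lies in $X\setminus Y_0$, and because $d_{X,T}$ counts disagreements $\varphi_1(h)x\ne\varphi_2(h)x$ regardless of whether the images stay in $X$, the bound $\#(X\setminus Y_0)\le |H|\,|X|\,d_{X,H}(\varphi_1,\varphi_2)$ goes through exactly as in the proof of Proposition~\ref{prop_conj}.

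The $\mathu_{HS}$ half, however, contains a genuine gap, and you flag it yourself. After the (correct) observations that conjugators form the set $\bigoplus_\sigma u_\sigma\otimes 1_{V_\sigma}$ and that $\|ptp-1_X\|_{HS,X}^2\le 2\bigl(1-\mathrm{Re}\Tra_X(ptp)\bigr)$, you reduce the proposition to the inequality $1-\mathrm{Re}\Tra_X(ptp)\preceq d_{X,H}(\varphi_1,\varphi_2)^2$ for the optimizing choice of $(u_\sigma)$, and then write that you ``expect'' this to follow from Lemma~\ref{ineq_HS} and Lemma~\ref{close_to_a_unitary}. That inequality \emph{is} the proposition; restating it as the outcome of an optimization proves nothing, and it does not follow from those two lemmas, which unitarize an operator that is already globally close to a unitary. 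Here no naturally averaged intertwiner has compression close to $1_X$: for $Y=\mathbb{C}^2$, $X=\mathbb{C}e_1$, $H=\mathbb{Z}/2$, $\varphi_1(h)=\left(\begin{smallmatrix}0&1\\1&0\end{smallmatrix}\right)$, $\varphi_2(h)=-\varphi_1(h)$, one has $\varphi_1^\#=\varphi_2^\#$ and $d_{X,H}(\varphi_1,\varphi_2)=0$, the conjugator $t=\mathrm{diag}(1,-1)$ is perfect, yet $\frac{1}{|H|}\sum_h\varphi_1(h)p\varphi_2(h)^*=\frac{1}{2}\mathrm{diag}(1,-1)$ compresses only to $\frac{1}{2}1_X$ (and the seed $1_Y$ averages to $0$). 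So the entire gain must come from unitarization inside the intertwiner space, which is exactly the step for which you supply no estimate. A smaller imprecision in the same place: the optimal $u_\sigma$ is the unitary (polar) part of the partial trace over $V_\sigma$ of the relevant block of $p$, not of the block itself, since that block is generally not of the form $u\otimes 1_{V_\sigma}$.

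What actually has to be proved is a quantitative rigidity statement: the compressions $\{p\varphi_i(h)p\}_{h\in H}$ are the Gram data of the spanning family $\{\varphi_i(h)x: h\in H,\ x\in X\}$ of the $\varphi_i(H)$-span of $X$; when these data coincide exactly, $\varphi_2(h)x\mapsto\varphi_1(h)x$ is a well-defined unitary intertwiner fixing $X$ pointwise, which extends to all of $Y$ using $\varphi_1^\#=\varphi_2^\#$. Proposition~\ref{new_conjugator} for $\mathu_{HS}$ is the HS-perturbed version of this, and proving that perturbed version (directly, or by rerunning the Gowers--Hatami argument of \cite{gowers2017inverse} as packaged in \cite{ger2021} while tracking the compression to $X$) is where essentially all the work lies. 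Until such an estimate is supplied, your write-up establishes only the $\mathu_P$ case.
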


Note that Proposition~\ref{new_conjugator} allows one to generalise Theorem~\ref{double_HNN}, Theorem~\ref{double_amalgam}, Theorem~\ref{amalgam_flex} and Theorem~\ref{hnn_flex} to very flexible stability.   
That means, if we assume starting groups to be very flexibly $\mathu$-stable, then the resulting group will also be very flexibly $\mathu$-stable.

\subsection{Open questions}

We conclude this paper with several open questions. 

Let $G$ be a group and $i\colon H\to G$ be an inclusion of a finite group $H$. What can one say about $i^*(\rp(G))$?  In particular, we are interested in the following question.  
\begin{question}
    Is $i^*(\rp(G))$ always finitely generated (as a semigroup)? 
\end{question}

In principle, the answer might be different for actions and for representations. 
Both a positive and a negative answer would be useful for a better understanding of these objects. Note that in our proofs we use stronger properties of $i^*\colon \rp(G)\to \rp(H)$.

The following question arises naturally from our results on flexible stability for $\mathu_{HS}$ and $\mathu_P$.

\begin{question}
    Theorem~\ref{amalgam_flex} gives only flexible $\mathu$-stability for amalgamated free products of $\mathu$-stable groups over almost-normal subgroups. Is it true that these amalgamated free products are in fact always $\mathu$-stable?  
\end{question}

If the answer is negative, it will give us first examples of flexibly $\mathu$-stable but not $\mathu$-stable groups.

As a generalisation of several results provided in this paper, we ask the following question: 

\begin{question}
    Is an amalgamated free product of two $\mathu$-stable groups over an arbitrary finite subgroup (flexibly) $\mathu$-stable? 
\end{question}

For the operator norm one can ask if Theorem~\ref{operator_amalgam} can be generalised in the following way. 

\begin{question}
    Is an amalgamated free product of operator norm stable groups over an infinite central subgroup always operator norm stable?
\end{question}

This question is motivated by the following example. The group $\mathbb{Z}*_{2\mathbb{Z}}\mathbb{Z}$ given by the relation $x^2=y^2$ is a fundamental group of the Klein bottle. It is operator norm stable as one of stable crystallographic groups (see \cite{eilers2020c}). 

\begin{bibdiv}
\begin{biblist}

\bib{akhtiamov2020uniform}{article}{
  title={On uniform Hilbert Schmidt stability of groups},
  author={Danil Akhtiamov and Alon Dogon},
  journal={Proc. Amer. Math. Soc.},
  volume={150},
  pages={1799--1809},
  year={2022}
}

\bib{arzhantseva2015almost}{article}{
  title={Almost commuting permutations are near commuting permutations},
  author={Arzhantseva, Goulnara},
  author={P{\u{a}}unescu, Liviu},
  journal={Journal of Functional Analysis},
  volume={269},
  number={3},
  pages={745--757},
  year={2015},
  publisher={Elsevier}
}

\bib{arzhantseva2023constraint}{article}{
      title={Constraint stability in permutations and action traces}, 
      author={Arzhantseva, Goulnara},
      author={P{\u{a}}unescu, Liviu},
      year={2023},
      journal={arXiv preprint arXiv:2304.07656}
}

\bib{atkinson2018some}{article}{
  title={Some results on tracial stability and graph products},
  author={Atkinson, Scott},
  journal={arXiv preprint arXiv:1808.04664},
  year={2018}
}

\bib{bader2023stability}{article}{
  title={Stability and instability of lattices in semisimple groups},
  author={Bader, Uri},
  author={Lubotzky, Alexander},
  author={Sauer, Roman},
  author = {Weinberger, Shmuel},
  journal={arXiv preprint arXiv:2303.08943},
  year={2023}
}

\bib{becker2019stability}{article}{
  title={Stability and invariant random subgroups},
  author={Becker, Oren},
  author={Lubotzky, Alexander},
  author={Thom, Andreas},
  journal={Duke Mathematical Journal},
  volume={168},
  number={12},
  year={2019}
}

\bib {becker2020group}{article}{
  title={Group stability and Property (T)},
  author={Becker, Oren},
  author={Lubotzky, Alexander},
  journal={Journal of Functional Analysis},
  volume={278},
  number={1},
  pages={108--298},
  year={2020},
  publisher={Elsevier}
}

\bib{benakli2001note}{article}{
  title={A note on doubles of groups},
  author={Benakli, Nadia}, 
  author={Dasbach, Oliver},
  author={Glasner, Yair},
  author={Mangum, Brian},
  journal={Journal of Pure and Applied Algebra},
  volume={156},
  number={2-3},
  pages={147--151},
  year={2001},
  publisher={Elsevier}
}

\bib{blackadar1985shape}{article}{
  title={Shape theory for C*-algebras},
  author={Blackadar, Bruce},
  journal={Mathematica Scandinavica},
  pages={249--275},
  year={1985},
  publisher={JSTOR}
}

\bib{burger2010ulam}{article}{
  title={On Ulam stability},
  author={Burger, Marc},
  author={Ozawa, Narutaka},
  author={Thom, Andreas},
  journal={Israel Journal of Mathematics},
  volume={193},
  pages={109--129},
  year={2013}
}

\bib{dadarlat2021obstructions}{article}{
  title={Obstructions to matricial stability of discrete groups and almost flat K-theory},
  author={Dadarlat, Marius},
  journal={Advances in Mathematics},
  volume={384},
  pages={107722},
  year={2021},
  publisher={Elsevier}
}


\bib{eckhardt2023amenable}{article}{
  title={On amenable Hilbert-Schmidt stable groups},
  author={Eckhardt, Caleb},
  author={Shulman, Tatiana},
  journal={Journal of Functional Analysis},
  volume={285},
  number={3},
  pages={109954},
  year={2023},
  publisher={Elsevier}
}

\bib{eilers2020c}{article}{
  title={$C^*$-stability of discrete groups},
  author={ Eilers, S{\o}ren},
  author={ Shulman, Tatiana},
  author={ S{\o}rensen, Adam},
  journal={Advances in Mathematics},
  volume={373},
  pages={107324},
  year={2020},
  publisher={Elsevier}
}

\bib{ger2021}{article}{
  url = {\url{https://doi.org/10.1090/proc/16411}},
  author = {Gerasimova, Maria},
  author = {Shchepin, Konstantin},
  title = {Virtually free groups are $p$-Schatten stable},
  year = {2023},
  journal = {Proc. Amer. Math. Soc.}
}

\bib{glebsky2009almost}{article}{
  title={Almost solutions of equations in permutations},
  author={Glebsky, Lev},
  author={Rivera, Luis Manuel},
  journal={Taiwanese Journal of Mathematics},
  volume={13},
  number={2A},
  pages={493--500},
  year={2009},
  publisher={Mathematical Society of the Republic of China}
}

\bib{gowers2017inverse}{article}{
  title={Inverse and stability theorems for approximate representations of finite groups},
  author={Gowers, William Timothy}, 
  author ={Hatami, Omid},
  journal={Sbornik: Mathematics},
  volume={208},
  number={12},
  pages={1784},
  year={2017},
  publisher={IOP Publishing}
}

\bib{hadwin2018stability}{article}{
  title={Stability of group relations under small Hilbert--Schmidt perturbations},
  author={Don Hadwin  and Tatiana Shulman },
  journal={Journal of Functional Analysis},
  volume={275},
  number={4},
  pages={761--792},
  year={2018},
  publisher={Elsevier}
}


\bib{ioana2019ii1}{article}{
  title={II1 factors with exotic central sequence algebras},
  author={Ioana, Adrian},
  author={Spaas, Pieter},
  journal={Journal of the Institute of Mathematics of Jussieu},
  pages={1--26},
  year={2019},
  publisher={Cambridge University Press}
}

\bib{ioana2020stability}{article}{
  title={Stability for product groups and property ($\tau$)},
  author={Ioana, Adrian},
  journal={Journal of Functional Analysis},
  volume={279},
  number={9},
  pages={108729},
  year={2020},
  publisher={Elsevier}
}

\bib{ioana2021almost}{article}{
  title={Almost commuting matrices and stability for product groups},
  author={Ioana, Adrian},
  journal={arXiv preprint arXiv:2108.09589},
  year={2021}
}

\bib{lazarovich2019surface}{article}{
  title={Surface groups are flexibly stable},
  author={Lazarovich, Nir},
  author={Levit, Arie},
  author={Minsky, Yair},
  journal={arXiv preprint arXiv:1901.07182},
  year={2019}
}

\bib{lazarovich2021virtually}{article}{
  title={Virtually free groups are stable in permutations},
  author={Lazarovich, Nir},
  author={Levit, Arie},
  journal={arXiv preprint arXiv:2103.05583},
  year={2021}
}

\bib{levit2022infinitely}{article}{
  title={Infinitely presented permutation stable groups and invariant random subgroups of metabelian groups},
  author={Levit, Arie},
  author={Lubotzky, Alexander},
  journal={Ergodic Theory and Dynamical Systems},
  volume={42},
  number={6},
  pages={2028--2063},
  year={2022},
  publisher={Cambridge University Press}
}

\bib{levit2022uncountably}{article}{
  title={Uncountably many permutation stable groups},
  author={Levit, Arie},
  author={Lubotzky, Alexander},
  journal={Israel Journal of Mathematics},
  volume={251},
  number={2},
  pages={657--678},
  year={2022},
  publisher={Springer}
}

\bib{levit2022characters}{article}{
  title={Characters of solvable groups, Hilbert-Schmidt stability and dense periodic measures},
  author={Levit, Arie},
  author={Vigdorovich, Itamar},
  journal={arXiv preprint arXiv:2206.02268},
  year={2022}
}


\bib{zheng2019rigid}{article}{
  title={On rigid stabilizers and invariant random subgroups of groups of homeomorphisms},
  author={Zheng, Tianyi},
  journal={arXiv preprint arXiv:1901.04428},
  year={2019}
}

\bib{voiculescu1983asymptotically}{article}{
  title={Asymptotically commuting finite rank unitary operators without commuting approximants},
  author={Voiculescu, Dan},
  journal={Acta Sci. Math.(Szeged)},
  volume={45},
  number={1-4},
  pages={429--431},
  year={1983}
}


\end{biblist}
\end{bibdiv}

\Addresses

\end{document}